\documentclass[11pt]{amsart}
\usepackage[T1]{fontenc}
\usepackage{lmodern}
\usepackage[margin=1.15in]{geometry}
\usepackage{amsmath,amssymb,amsthm,mathtools,bm,mathrsfs}
\usepackage{enumitem,booktabs,hyperref}
\numberwithin{equation}{section}
\allowdisplaybreaks[2]
\newtheorem{theorem}{Theorem}[section]
\newtheorem{thmA}{Theorem}[section]

\newtheorem{lemma}[theorem]{Lemma}
\newtheorem{corollary}[theorem]{Corollary}
\newtheorem{example}[theorem]{Example}
\newtheorem*{cconj}{Chern Conjecture}
\newtheorem*{luconj}{Lu's second gap conjecture}
\theoremstyle{remark}
\newtheorem{remark}[theorem]{Remark}
\newcommand{\R}{\mathbb R}
\newcommand{\Sph}{\mathbb S}
\newcommand{\tr}{\operatorname{tr}}
\newcommand{\diag}{\operatorname{diag}}
\newcommand{\rank}{\operatorname{rank}}
\newcommand{\Span}{\operatorname{span}}
\newcommand{\dd}{\mathrm d}
\newcommand{\G}{\mathcal G}
\title[Codimension Reduction in Spheres]{Codimension Reduction of Minimal Submanifolds with Flat Normal Bundle in Spheres}
\author{Juan Li}
\address{School of Mathematics, Hangzhou Normal University, Hangzhou, China}
\email{juanli@zju.edu.cn}

\author{Zhiyuan Xu}
\address{School of Mathematics, Hangzhou Normal University, Hangzhou, China}
\email{xuzy@hznu.edu.cn}
\thanks{Zhiyuan Xu is the corresponding author.}
\keywords{Codimension reduction, curvature pinching, complete minimal submanifolds, flat normal bundle}
\begin{document}

\begin{abstract}

Let $x:M^n\to\Sph^{n+q}$ be a minimal immersion of a complete connected manifold with flat normal bundle, where $n\ge3$ and $q\ge2$. We prove that if the squared norm of the second fundamental form $S$ satisfies $S<2n$, then $M^n$ lies in $\Sph^{n+1}$. The standard minimal products of three spheres show that the upper bound $2n$ is optimal. We also prove a codimension reduction theorem under the condition $S+\lambda_2\le 2n+\frac{2n}{3n-2}$, where $\lambda_2$ is the second largest eigenvalue of the Gram matrix of the shape operators, implying that $M^n$ lies in $\Sph^{n+1}$.

\end{abstract}

\maketitle

\section{Introduction}

Let $x:M^n\to\Sph^{n+q}$ be a minimal immersion of a closed connected Riemannian manifold into the unit sphere. Denote by $S$ the squared norm of the second fundamental form of $M$. Through the Gauss equation, $S$ is directly related to the scalar curvature of the submanifold. The investigation into the gap phenomena of $S$ began with the seminal work of Simons \cite{Simons1968}, who proved that the pointwise bound $0\le S\le n/(2-\frac{1}{q})$ forces $S$ to be identically zero or identically equal to the upper endpoint. The equality case $S\equiv n/(2-\frac{1}{q})$ was subsequently classified by Chern--do Carmo--Kobayashi \cite{CDK1970}, and independently by Lawson \cite{Lawson1969} in the hypersurface case. Specifically, a closed minimal hypersurface satisfying $0 \le S \le n$ must be either totally geodesic or a Clifford minimal hypersurface. These results inspired a series of profound conjectures concerning the structure of minimal hypersurfaces with constant scalar curvature. S.S. Chern \cite{Chern1968} proposed the original and most influential conjecture, asserting that the possible values of the scalar curvature form a discrete set. This conjecture was later refined by Verstraelen--Montiel--Ros--Urbano \cite{Verstraelen1986}. Subsequently, Xu--Xu \cite{XX2017} formulated the stronger version. For clarity, we summarize them as follows.
\begin{cconj}
Let $M$ be a closed minimal hypersurface in $\Sph^{n+1}$.\\
{\rm (A)(Standard version)}  If $M$ has constant scalar curvature, then the possible values of the scalar curvature of $M$ form a discrete set. \\
{\rm (B)(Refined version)} If $M$ has constant scalar curvature, then $M$ is isoparametric. \\
{\rm (C)(Stronger version)} Set $a_k=(k-sgn(5-k))n$, for $1\le k\le5$. Then we have\\
(i) For any fixed $1\le k\le4$, if $a_k\le S\le a_{k+1}$, then $M$ is isoparametric, and $S\equiv a_k$ or $S\equiv a_{k+1}$.\\
(ii) If $S\ge a_{5}$, then $M$ is isoparametric, and $S\equiv a_5$.
\end{cconj}

Beyond the first pinching interval, the second gap problem asks whether a constant $S$ satisfying $n\le S\le2n$ must equal $n$ or $2n$. The corresponding second pinching problem asks whether the same rigidity holds under the pointwise condition, without assuming that $S$ is constant. Peng--Terng \cite{PTConstant1983} first established the second gap theorem for constant $S$ with a gap of $1/(12n)$, which was later improved by Yang--Cheng \cite{YC1998}, Suh--Yang \cite{SY2007}, and recently by Chen \cite{Chen2026}. The study of the second pinching problem was also initiated by Peng--Terng \cite{PTScalar1983} in low dimensions, with subsequent higher-dimensional estimates by Ding--Xin \cite{DX2011} and refinements by Lei, Xu and the second author \cite{LXXPublic,XX2017}. Chang \cite{Chang1993} proved Chern Conjecture (B) in dimension three. Deng--Gu--Wei \cite{DGW2017} verified Chern Conjecture (B) in dimension four under the Willmore assumption, which was later weakened to the constant cubic trace assumption by He--Xu--Zhao \cite{HXZ2026}. For higher-dimensional minimal hypersurfaces with constant scalar curvature and constant cubic trace, Cheng--Wei--Yamashiro \cite{CWY2025} obtained an improved second gap for complete hypersurfaces with $n\ge5$; Tan--Tang--Xie--Yan \cite{TTXY2026} proved local finiteness of the possible values of $S$ for closed embedded hypersurfaces; Ge--Tan--Yan--Zhang \cite{GTYZ2026} and Guan \cite{Guan2026} independently established the sharp second gap for closed minimal hypersurfaces with $n\ge5$.

In higher codimension, the geometry also depends on how the second fundamental form is distributed among the normal directions. Under weaker topological assumptions, Magliaro--Mari--Roing--Savas-Halilaj \cite{MMRS2024} proved that the classical rigidity conclusion under $S\le n/(2-\frac{1}{q})$ remains valid for complete minimal submanifolds. Ge--Li--Zhang \cite{GLZNormal2026} proved that, under the pointwise bound $0\le S\le n$ together with the flat normal bundle assumption, 
$M$ is either totally geodesic or a Clifford minimal submanifold. In a subsequent paper \cite{GLZFlat2026}, they established an explicit second-gap theorem under the additional assumption that $S$ is constant. Firester--Tsiamis \cite{FT2026} constructed a countable family of closed embedded minimal submanifolds $\{M^{n}_{j}\}$ in $\Sph^{n+q}$ with non-flat normal bundle whose constant values $S_j$ are dense in an interval, thereby disproving the Chern conjecture in higher codimension.

In this paper, we mainly investigate codimension reduction under the assumption that the normal bundle is flat. We first prove the following optimal codimension reduction theorem for complete minimal submanifolds under the pointwise condition $S<2n$, without assuming that $S$ is constant.
\begin{thmA}\label{thm:main}
Let $x:M^n\to\Sph^{n+q}$ be a minimal immersion of a complete connected manifold with flat normal bundle, where $n\ge3$ and $q\ge2$. If the squared norm of the second fundamental form satisfies
\begin{equation}\label{eq:main-pinching}
 S<2n,
\end{equation}
then $x(M)$ is contained in a fixed totally geodesic subsphere $\Sph^{n+1}\subset\Sph^{n+q}$.
\end{thmA}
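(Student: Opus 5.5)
The plan is to show that the \emph{first normal space} $N_1(p)=\Span\{h(X,Y):X,Y\in T_pM\}$ has dimension at most one everywhere and is parallel in the normal bundle. Erbacher's reduction theorem then places $x(M)$ in a totally geodesic $\Sph^{n+1}$.

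\textbf{Setup.} Since the normal bundle is flat, the Ricci equation gives $[A_\alpha,A_\beta]=0$. Hence at each point there is an orthonormal frame $\{e_i\}$ diagonalizing all shape operators simultaneously. I then encode the second fundamental form by the normal vectors $\nu_i=h(e_i,e_i)$, which satisfy
\[
\sum_i\nu_i=0,\qquad S=\sum_i|\nu_i|^2,\qquad K_{ij}=1+\langle\nu_i,\nu_j\rangle .
\]
The Gram matrix $\sigma_{\alpha\beta}=\tr(A_\alpha A_\beta)$ has eigenvalues $\lambda_1\ge\lambda_2\ge\cdots\ge 0$, and $\dim N_1=\rank(\sigma_{\alpha\beta})$. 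So the goal is to prove that $\lambda_2\equiv0$.

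\textbf{Simons identity and the test function.} With commuting shape operators, Simons' identity reduces to
\[
\tfrac12\Delta S=|\nabla h|^2+nS-\sum_{\alpha,\beta}\sigma_{\alpha\beta}^2 .
\]
I will work with the invariant
\[
G=S^2-\sum_{\alpha,\beta}\sigma_{\alpha\beta}^2=2\sum_{\alpha<\beta}\lambda_\alpha\lambda_\beta\ \ge 0,
\]
which vanishes exactly where $\dim N_1\le1$. I will compute $\Delta G$ (equivalently $\Delta$ of the restriction of $h$ to the orthogonal complement of the principal direction) using the Codazzi equations in the diagonalizing frame. The target is an inequality of the form
\[
\Delta G\ \ge\ c\,(2n-S)\,G\;-\;(\text{gradient terms controlled by a Kato-type inequality}).
\]
The algebraic heart is a pointwise lemma about vectors $\nu_1,\dots,\nu_n$ with $\sum\nu_i=0$ spanning at least a plane. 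Such a configuration contains at least three distinct $\nu$'s, and the reaction term is then $\ge(2n-S)G$, with equality exactly for the configuration of the minimal product of three spheres. That equality case is where the constant $2n$ enters. I expect to derive this by grouping equal $\nu_i$'s into clusters of multiplicities $n_1,\dots,n_k$ and optimizing, mimicking the proof of the Ge--Li--Zhang theorem for $S\le n$.

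\textbf{Completeness and conclusion.} Since $S<2n$ is only a strict pointwise bound, I cannot rely on a uniform gap. I will apply the inequality to a normalized quantity such as $G^{\varepsilon}$ or $G/(2n-S)^{k}$. On the compact case, or via the Omori--Yau maximum principle (available because $S$ is bounded, so Ricci is bounded below), I will show that $\sup G$ is attained in a limiting sense with nonnegative reaction term, forcing $G\equiv0$. Once $\dim N_1\le1$, the set where $h\ne0$ is open and dense unless $M$ is totally geodesic, by analyticity of minimal submanifolds. On that set let $\xi$ be the unit normal spanning $N_1$, so that $A_\eta=0$ for all $\eta\perp\xi$. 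Codazzi then gives $\langle\nabla^\perp\xi,\eta\rangle A_\xi=0$, hence $\nabla^\perp\xi=0$. So $N_1$ is a parallel line subbundle, and by connectedness and Erbacher the image lies in a fixed $\Sph^{n+1}$.

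\textbf{Main obstacle.} The hardest step is the sharp algebraic estimate for the reaction term, together with absorbing the gradient terms of $\Delta G$. In particular, the eigenvalue multiplicities of the $A_\alpha$ may jump, so the diagonalizing frame is not smooth. I would first try the Ge--Li--Zhang style argument: restrict to the open dense set where multiplicities are locally constant, and use the Codazzi equations to show that the cross-cluster derivatives are controlled by the differences $|\nu_i-\nu_j|^2$.
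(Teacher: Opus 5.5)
Your test function $G$ is the paper's $E=S^2-\tr(\G^2)$, and your endgame ($G\equiv0$, then $N_1$ is a parallel line bundle, then Erbacher) is Lemma~\ref{lem:codimension-reduction}. The genuine gap is the global step on a complete noncompact $M$.

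Since $S<2n$ carries no uniform gap, Omori--Yau applied to $G$ or $G^\varepsilon$ along a maximizing sequence (even granting your planned differential inequality) only yields $(2n-S)G\to0$, $|\nabla A|\to0$ and $\sigma_3\to0$, i.e.\ $S(p_j)\to2n$. A pointed limit of this sequence is a minimal immersion with $\nabla A=0$, $S\equiv2n$ and two Gram eigenvalues equal to $n$. That is the local model of the three-sphere product of Example~\ref{example:3-sphere}, and it is fully compatible with $S<2n$ on $M$. So ``nonnegative reaction term in the limit'' forces nothing.

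The normalization $G/(2n-S)^k$ does not rescue this. It need not be bounded, so Omori--Yau is unavailable. Its zero-order term $2k\,\frac{nS-\tr(\G^2)}{2n-S}$ also has no sign: for $\lambda_1=n+t$, $\lambda_2=n-t-\epsilon$ it equals $2k(n-\epsilon-2t-2t^2/\epsilon)$.

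The paper supplies a genuinely global input here. It sets $v=2n^2-E-\frac{2n-S}{24(n-1)}$ and uses Omori--Yau together with the strict inequality to get $v>0$. A Barta-type integration then gives the uniform bound $\int|\nabla\varphi|^2\ge\frac{e^{-(2n+1)/2}}{12(n-1)}\int\varphi^2$ on $\{E>n^2\}$ (Lemma~\ref{lem:spectral}), independent of how close $S$ comes to $2n$. This bound transfers to the pointed limit of Lemmas~\ref{lem:compactness} and~\ref{lem:limit}, where $E_\infty\equiv2n^2$ and $\operatorname{Ric}_\infty\ge0$. There, Bishop--Gromov cutoffs force the bottom of the spectrum to be zero, a contradiction. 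Your plan has no substitute for this step. Only the closed case, where $S\le2n-\delta$, closes by the maximum principle, and only once the gradient estimate below is available.

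You have also placed the algebraic difficulty in the wrong term. The reaction term needs no cluster optimization: Newton's identities give exactly $nS^2-(S+n)\tr(\G^2)+\tr(\G^3)=\frac{2n-S}{2}E+3\sigma_3$. Equality in the bound by $\frac{2n-S}{2}E$ holds whenever $\rank\G\le2$, not only at the three-sphere configuration.

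The hard part, needed even in the closed case, is the gradient expression $\sum_\alpha(S-|A^\alpha|^2)|\nabla A^\alpha|^2+\frac12|\nabla S|^2-2\sum_{\alpha,\beta,k}\langle A^\alpha,\nabla_kA^\beta\rangle^2$. Take $|A^1|=|A^2|$ and $\langle A^1,\nabla_kA^1\rangle=-\langle A^2,\nabla_kA^2\rangle$; in this direction $\nabla_kS=\nabla_kE=0$. Cauchy--Schwarz, or even the refined Kato inequality applied to each $A^\alpha$ separately, leaves a remainder of size $(\frac4n-2)\sum_k\langle A^1,\nabla_kA^1\rangle^2<0$. No $|\nabla E|^2/E$ term can absorb it.

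The paper controls this through the sharp weighted inequality $2TDT\preceq Q_\Lambda(T)I_r$ under $\tr(DT)=0$ (Lemma~\ref{lem:matrix}). It combines this with the Codazzi decomposition \eqref{eq:codazzi-square-decomposition} and the trace-free row bound of Lemma~\ref{lem:directional-estimate} to obtain \eqref{eq:coercive}, hence the subharmonicity of $E^n$. Your proposal names this obstacle but gives no mechanism for it.
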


\begin{remark}\label{rem:surface-case}
The surface case is already known without any curvature pinching condition. Wang \cite[Corollary~3.3]{Wang2013} proved that a connected minimal surface with flat normal bundle in a sphere is contained in a three-dimensional totally geodesic subsphere. In dimension two, the commuting trace-free shape operators span a space of dimension at most one, so Lemma~\ref{lem:codimension-reduction} yields the same surface conclusion. Therefore, the pinching argument developed here concerns dimensions $n\ge3$. We recently learned that Zhang--Zhao \cite{ZZ2026} has independently investigated this problem, obtaining the same conclusion as in Theorem~\ref{thm:main} for closed submanifolds under the stronger pinching assumption $n\le S\le 14n/13$.
\end{remark}
The following example shows that the pinching condition \eqref{eq:main-pinching} is sharp for every $n\ge3$: it cannot be replaced by $S\le2n$.
\begin{example}\label{example:3-sphere}
For positive integers $n_1,n_2,n_3$ with $n_1+n_2+n_3=n$, set $r_i=\sqrt{\frac{n_i}{n}}$ and consider
\[
 \prod_{i=1}^3\Sph^{n_i}(r_i)\subset\Sph^{n+2}(1).
\]
Let $u_i$ be the unit radial vector in the $i$-th Euclidean factor. A normal vector in the sphere has the form $\nu=\sum_i c_i u_i$ with $\sum_i r_i c_i=0$, and its shape operator $A^\nu$ acts on the $i$-th factor as multiplication by $-\frac{c_i}{r_i}$. Hence
\[
 \tr A^\nu=-n\sum_i r_i c_i=0,\qquad
 |A^\nu|^2=n\sum_i c_i^2=n|\nu|^2.
\]
The product is therefore minimal, and its commuting shape operators give a flat normal bundle. The normal space has dimension two, which means the shape operators span a two-dimensional space, so the norm identity gives $S=2n$. Thus the image cannot lie in a totally geodesic $\Sph^{n+1}$.
\end{example}

Theorem~\ref{thm:main} separates codimension reduction from scalar curvature rigidity: it does not imply that $S$ is constant, or that the resulting hypersurface is Clifford. Rather, it allows hypersurface pinching theorems to be applied after the ambient codimension has been reduced.
Combining Theorem~\ref{thm:main} with the results on Chern Conjecture for minimal hypersurfaces in \cite{Chen2026,LXXPublic,PTConstant1983}, we obtain the following corollary.
\begin{corollary}
Let $x:M^n\to\Sph^{n+q}$ be a minimal immersion of a closed connected manifold with flat normal bundle.\\
(i) For $n=3$, if $S$ is constant and satisfies $S<6$, then $x(M)$ is either a totally geodesic sphere $\Sph^3$ or a Clifford minimal hypersurface.\\
(ii) For $n\ge4$, if $S$ is constant and satisfies $0\le S-n\le \frac{26}{59}n$, then $S=n$ and $x(M)$ is a Clifford minimal hypersurface.\\
(iii) For $n\ge3$, if $0\le S-n\le \frac{1}{18}n$, then $S\equiv n$ and $x(M)$ is a Clifford minimal hypersurface.

In each case, $x(M)$ is contained in a fixed totally geodesic subsphere $\Sph^{n+1}\subset\Sph^{n+q}$, and $x:M\to x(M)$ is a finite Riemannian covering.
\end{corollary}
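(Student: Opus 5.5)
The corollary is obtained by first reducing codimension with Theorem~\ref{thm:main}, then applying known hypersurface pinching theorems. In each of (i)--(iii) the hypothesis gives $S<2n$ pointwise:
\begin{itemize}[leftmargin=*]
\item in (i), $S<6=2n$ directly;
\item in (ii), $S\le n+\frac{26}{59}n<2n$;
\item in (iii), $S\le n+\frac{1}{18}n<2n$.
\end{itemize}
Since $M$ is closed, it is complete, and $n\ge3$ in all cases. If $q\ge2$, Theorem~\ref{thm:main} applies and gives a fixed totally geodesic $\Sph^{n+1}\subset\Sph^{n+q}$ containing $x(M)$. If $q=1$ this holds trivially.

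Next I would regard $x$ as a minimal immersion $M^n\to\Sph^{n+1}$. Because $\Sph^{n+1}$ is totally geodesic in $\Sph^{n+q}$, the second fundamental form of $M$ in $\Sph^{n+q}$ takes values in the unit normal of $M$ inside $\Sph^{n+1}$. Minimality is therefore preserved, and $S$ is unchanged, so each hypothesis transfers verbatim to the hypersurface problem.

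Each case then follows from a cited theorem.
\begin{itemize}[leftmargin=*]
\item \emph{Case (i).} For closed minimal hypersurfaces in $\Sph^{4}$ with constant $S$, the known classification gives $S\in\{0,3,6\}$, with $S=0$ totally geodesic and $S=3$ Clifford. This classification comes from Peng--Terng \cite{PTConstant1983} in dimension three, together with the Chern--do Carmo--Kobayashi/Lawson rigidity. The constraint $S<6$ excludes the Cartan isoparametric case $S=6$, leaving a totally geodesic $\Sph^3$ or a Clifford hypersurface.
\item \emph{Case (ii).} Chen's second gap theorem \cite{Chen2026} for constant $S$ with $n\ge4$ states that $n\le S\le n+\frac{26}{59}n$ forces $S=n$. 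CDK/Lawson then gives a Clifford hypersurface.
\item \emph{Case (iii).} This is the pointwise second pinching theorem of Lei--Xu--Xu \cite{LXXPublic}: $n\le S\le n+\frac{n}{18}$ forces $S\equiv n$, hence Clifford.
\end{itemize}
The step needing the most care is confirming that these cited constants match the stated hypotheses exactly. In particular, \cite{LXXPublic} must cover $n\ge3$ with the constant $1/18$, and the three-dimensional statement must cover the full range $S<6$ with constant $S$.

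Finally, the covering statement. The image $x(M)$ is an embedded closed hypersurface: either a great sphere or a Clifford torus $\Sph^k(\sqrt{k/n})\times\Sph^{n-k}(\sqrt{(n-k)/n})$. The map $x:M\to x(M)$ is a local isometry. Since $M$ is compact and $x(M)$ is connected, this local isometry is a proper local diffeomorphism, hence a finite Riemannian covering.
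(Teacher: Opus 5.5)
Your proposal is correct and follows the paper's route: since $S<2n$ in all three cases, Theorem~\ref{thm:main} (trivially so when $q=1$) reduces to a closed minimal hypersurface of $\Sph^{n+1}$ with the same $S$. Peng--Terng together with Simons and Chern--do Carmo--Kobayashi/Lawson then settles (i), Chen settles (ii), Lei--Xu--Xu settles (iii), and the covering statement follows as you describe.

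One minor precision concerns (i). What you actually use there is that Simons excludes constant $S\in(0,3)$ and Peng--Terng excludes constant $S\in(3,6)$. The full list $S\in\{0,3,6\}$ is Chang's theorem, and it is not needed here.
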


The pinching constant in the Simons rigidity theorem was improved by Li--Li \cite{LL1992} to $2n/3$ in codimension $q\ge 3$. Recent improvements of this bound for $n\ge3$ were obtained in \cite{Lei2026,LZ2026,XZ2026}. For $q\ge2$, denote by $\lambda_2$ the second largest eigenvalue of $\bigl(\langle A^\alpha,A^\beta\rangle\bigr)_{\alpha,\beta=1}^q$, the Gram matrix of the shape operators. Lu \cite{Lu2011} proved that if a closed connected minimal submanifold of $\Sph^{n+q}$ satisfies
\[S+\lambda_2\le n,\]
then it is either a totally geodesic $\Sph^{n}$, a Clifford minimal hypersurface in $\Sph^{n+1}$, or a Veronese surface in $\Sph^4$. In the latter two cases, $S+\lambda_2\equiv n$. Since $\lambda_2\le\frac{S}{2}$, Lu's condition $S+\lambda_2\le n$ is weaker than Li--Li's. Lu also proposed the following second gap conjecture.
\begin{luconj}
For each fixed $n$ and $q$, there exists $\varepsilon(n,q)>0$ such that any closed minimal submanifold of $\Sph^{n+q}$ whose $S+\lambda_2$ is constant and greater than $n$ satisfies
\[
 S+\lambda_2>n+\varepsilon(n,q).
\]
\end{luconj}

Ge--Li--Zhang \cite{GLZLu2026} settled this conjecture for closed minimal surfaces in $\Sph^4$ by proving that if $S+\lambda_2$ is constant, then it equals $0$ or $2$. The value $0$ corresponds to a totally geodesic $\Sph^2$, and the value $2$ to a Clifford torus in a totally geodesic $\Sph^3$ or to the Veronese surface in $\Sph^4$. Li--Zhao \cite{LZLu2026} disproved this conjecture for minimal surfaces in every codimension $q\ge3$ by constructing examples with non-flat normal bundle. Recently, Ding--Li--Yang--Zhang \cite{DLYZ2026} constructed examples with non-flat normal bundle whose constant values $S+\lambda_2$ are dense in $(n, 2n)$ for $n\ge3$ and $q\ge n+1$, and disproved this conjecture.

We next establish a codimension reduction theorem under a pointwise condition on $S+\lambda_2$, without assuming that $S$ or $S+\lambda_2$ is constant.
\begin{thmA}\label{thm:lu-pinching}
Let $x:M^n\to\Sph^{n+q}$ be a minimal immersion of a complete connected manifold with flat normal bundle, where $n\ge3$ and $q\ge2$. If
\begin{equation}\label{eq:lu-pinching}
 S+\lambda_2\le 2n+\frac{2n}{3n-2}
\end{equation}
then $x(M)$ is contained in a fixed totally geodesic subsphere $\Sph^{n+1}\subset\Sph^{n+q}$.
\end{thmA}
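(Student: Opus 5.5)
The plan is to show that the first normal space has dimension at most one at every point, and then invoke Lemma~\ref{lem:codimension-reduction}. Since $\lambda_2\ge0$, the hypothesis \eqref{eq:lu-pinching} does not imply $S<2n$, so Theorem~\ref{thm:main} cannot simply be quoted. The argument should instead be a parallel one in which $\lambda_2$ replaces part of $S$.

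\emph{Step 1 (pointwise structure).} Because the normal bundle is flat, locally I will fix a parallel orthonormal normal frame $\{e_\alpha\}$. The shape operators $A^\alpha$ commute, so they are simultaneously diagonal in an orthonormal frame $\{e_i\}$ with eigenvalues $\mu^\alpha_i$. Set $v_i=(\mu^\alpha_i)_\alpha\in\R^q$. Minimality gives $\sum_i v_i=0$, and the Gram matrix is $G=\sum_i v_iv_i^{T}$, with eigenvalues $\lambda_1\ge\lambda_2\ge\dots\ge\lambda_q$ and $S=\tr G$. The goal is $\lambda_2\equiv0$, i.e.\ $\rank G\le1$. After that, the normal line field $\Span\{A^\alpha\}$ is handled exactly as in the proof of Theorem~\ref{thm:main} via Lemma~\ref{lem:codimension-reduction}.

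\emph{Step 2 (Simons identity for the Gram matrix).} With flat normal bundle the commutator terms in Simons' formula vanish, leaving
\[
\Delta A^\alpha=nA^\alpha-\sum_\beta G_{\alpha\beta}A^\beta .
\]
Hence
\[
\tfrac12\Delta G_{\alpha\beta}=\langle\nabla A^\alpha,\nabla A^\beta\rangle+nG_{\alpha\beta}-(G^2)_{\alpha\beta}.
\]
I will study $T=S-\lambda_1=\lambda_2+\dots+\lambda_q$, which vanishes exactly when $\rank G\le1$. I will work at points where the normal frame is rotated to diagonalize $G$, and use a barrier/support-function argument for the eigenvalue terms. Taking traces gives
\[
\tfrac12\Delta T\ \approx\ \sum_{\alpha\ge2}|\nabla A^\alpha|^2+nT-\sum_{\alpha\ge2}\lambda_\alpha^2+(\text{terms from rotating the frame}).
\]
The zero-order part is $\ge T(n-\lambda_2)$. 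That alone only works when $\lambda_2<n$. To reach the stated bound I must also exploit the gradient terms and the constraint $\sum_i v_i=0$ together with $S+\lambda_2$. Concretely, I would write $T(n-\lambda_2)+(\text{gradient gain})$ and aim for
\[
\tfrac12\Delta T\ \ge\ c\,T\Big(2n+\tfrac{2n}{3n-2}-S-\lambda_2\Big)
\]
for some constant $c>0$. The idea is to couple the $A^1$-direction with the others through the Codazzi equations: the cross terms $\langle\nabla A^1,\nabla A^\alpha\rangle$ that appear when the eigenframe rotates are controlled by $|\nabla A^\alpha|$ weighted by $\lambda_1-\lambda_\alpha$.

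\emph{Step 3 (completeness).} Since $S$ is bounded, the Gauss equation gives Ricci curvature bounded below. The Omori--Yau maximum principle then applies to the bounded function $T$. The resulting inequality forces $\sup T=0$; in the boundary case of \eqref{eq:lu-pinching} one argues via the equality discussion, or via a strong maximum principle on the set where $T>0$. This yields $\lambda_2\equiv0$, and the conclusion follows as in Theorem~\ref{thm:main}.

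\emph{Main obstacle.} The crux is Step 2's gradient estimate, which must turn $\sum_{\alpha\ge2}|\nabla A^\alpha|^2$ and the frame-rotation terms into enough positivity to cover the range $n\le\lambda_2$. The constant $\frac{2n}{3n-2}$ suggests a refined Kato-type inequality for trace-free Codazzi tensors restricted to the commuting, simultaneously diagonal situation. I would try first an estimate of the form
\[
|\nabla A^\alpha|^2\ \ge\ \tfrac{3n-2}{2n}\ \cdot\ \big(\text{off-diagonal Codazzi components}\big),
\]
obtained by splitting $h^\alpha_{ijk}$ into indices with all three equal, two equal, and all distinct. I would then use $\sum_i\mu^\alpha_i=0$ to bound the diagonal part $h^\alpha_{iii}$ by the others.
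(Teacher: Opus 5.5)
There is a genuine gap. Your Step~2 states a target inequality but does not derive it, and for your test function it cannot be derived. The function $S-\lambda_1$ is the sum of the $q-1$ smallest eigenvalues of $\G$, hence concave in $\G$, so the frame-rotation terms enter its Laplacian with the \emph{wrong} sign. Where $\lambda_1$ is simple, second-order eigenvalue perturbation together with Lemma~\ref{lem:symmetry} gives
\[
 \tfrac12\Delta(S-\lambda_1)=\sum_{\alpha\ge2}|\nabla A^\alpha|^2+n(S-\lambda_1)-\sum_{\alpha\ge2}\lambda_\alpha^2
 -4\sum_{\beta\ge2}\frac{\sum_k\langle A^1,\nabla_kA^\beta\rangle^2}{\lambda_1-\lambda_\beta}.
\]
The last term is a loss, not a gain, and its coefficient blows up as $\lambda_\beta\to\lambda_1$. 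It cannot be absorbed even at a critical point. Take $q=2$ and generic orthogonal trace-free eigenvalue vectors $\mu^1,\mu^2$. Set $A^2_{iik}=c_k\mu^1_i$ and let all other components of $\nabla A^2$ vanish. This is compatible with Codazzi, trace-freeness, and the differentiated commutation relation, the last one solved for $\nabla A^1$. Then $\nabla\lambda_2=0$ by orthogonality and $|\nabla A^2|^2\le3\lambda_1\sum_kc_k^2$. The loss, however, equals $\frac{4\lambda_1^2}{\lambda_1-\lambda_2}\sum_kc_k^2>4\lambda_1\sum_kc_k^2$. Scaling $c_k$ therefore makes $\Delta(S-\lambda_1)<0$, so no pointwise inequality of the form you aim for can hold. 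In addition, at points where $\lambda_1=\lambda_2$ the function $S-\lambda_1$ has a concave kink, locally like $-|x|$. A maximum there carries no second-order information, so neither a barrier argument nor Omori--Yau (which needs a $C^2$ function) can be run.

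Your diagnosis of the obstacle is also off. Since $S\ge2\lambda_2$, \eqref{eq:lu-pinching} gives $\lambda_2\le\frac{2n(3n-1)}{3(3n-2)}<n$. So the range $\lambda_2\ge n$ never occurs, and the zero-order term is never the issue; the whole difficulty lies in the gradient terms.

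The missing idea is to use a smooth function of $\G$, for which no singular rotation terms arise. The paper takes $f=E/S^p$ with $E=S^2-\tr(\G^2)$ and $p=\frac{2}{3n}$. At a positive critical point, $\nabla E=\frac{pE}{S}\nabla S$ becomes the weighted trace constraint $\tr(D_pF)=0$. The matrix inequality of Lemma~\ref{lem:matrix-parameter} then applies, because $1-\frac{3p}{2}=\frac{n-1}{n}$ matches the row estimate of Lemma~\ref{lem:directional-estimate}, and it makes all gradient terms nonnegative (Lemma~\ref{lem:lu-critical-derivative}). The zero-order terms give $\frac{\Delta f}{f}\ge2[(2-p)n-(1-p)(S+\lambda_2)+(1-3p)\lambda_2]$. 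The identity $\frac{(2-p)n}{1-p}=2n+\frac{2n}{3n-2}$ is exactly where the constant comes from; it is not a Kato-type constant. Finally, instead of Omori--Yau, the paper uses the pointed compactness of Lemma~\ref{lem:compactness}. This yields a genuine maximum of the limit function, where the exact critical-point identity is available.
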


Combining Theorem~\ref{thm:lu-pinching} with the results on Chern Conjecture for minimal hypersurfaces in \cite{Chang1993,Chen2026,dAB1990,LXXPublic}, we obtain the following corollary.
\begin{corollary}
Let $x:M^n\to\Sph^{n+q}$ be a minimal immersion of a closed connected manifold with flat normal bundle, where $q\ge2$.\\
(i) For $n=3$, if $S+\lambda_2$ is constant and satisfies $S+\lambda_2\le \frac{48}{7}$, then $x(M)$ is a totally geodesic sphere $\Sph^3$, a Clifford minimal hypersurface, or Cartan's minimal hypersurface.\\
(ii) For $n\ge4$, if $S+\lambda_2$ is constant and satisfies $0\le S+\lambda_2-n\le \frac{26}{59}n$, then $S+\lambda_2=n$ and $x(M)$ is a Clifford minimal hypersurface.\\
(iii) For $n\ge3$, if $0\le S+\lambda_2-n\le \frac{1}{18}n$, then $S\equiv n$, $\lambda_2\equiv0$, and $x(M)$ is a Clifford minimal hypersurface.

In each case, $x(M)$ is contained in a fixed totally geodesic subsphere $\Sph^{n+1}\subset\Sph^{n+q}$, and $x:M\to x(M)$ is a finite Riemannian covering.
\end{corollary}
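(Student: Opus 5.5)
The plan is to reduce to the hypersurface case with Theorem~\ref{thm:lu-pinching} and then quote the cited classification results for minimal hypersurfaces. Everything hinges on one observation: once $x(M)$ lies in a totally geodesic $\Sph^{n+1}$, we have $\lambda_2\equiv0$ and hence $S+\lambda_2=S$.

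\emph{Step 1: each hypothesis falls under \eqref{eq:lu-pinching}.} For $n=3$,
\[
2n+\frac{2n}{3n-2}=6+\frac{6}{7}=\frac{48}{7},
\]
so (i) is exactly the borderline case of \eqref{eq:lu-pinching}. In (ii) and (iii) we have
\[
S+\lambda_2\le n+\tfrac{26}{59}n<2n \quad\text{and}\quad S+\lambda_2\le n+\tfrac1{18}n<2n
\]
respectively. Since $M$ is closed, it is complete. Theorem~\ref{thm:lu-pinching} therefore places $x(M)$ in a fixed totally geodesic $\Sph^{n+1}\subset\Sph^{n+q}$, which is the common last assertion.

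\emph{Step 2: $\lambda_2\equiv0$.} Choose a normal frame $e_{n+1},\dots,e_{n+q}$ with $e_{n+1}$ normal to $M$ inside $\Sph^{n+1}$ and $e_{n+2},\dots,e_{n+q}$ normal to $\Sph^{n+1}$ in $\Sph^{n+q}$. Because $\Sph^{n+1}$ is totally geodesic, $A^{\alpha}=0$ for $\alpha\ge n+2$. So the Gram matrix is $\diag(S,0,\dots,0)$, giving $\lambda_2\equiv0$ and $S+\lambda_2=S$. Moreover $x:M\to\Sph^{n+1}$ is still a minimal immersion, now with scalar $S$ satisfying the same bounds as before. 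Each case is therefore a statement about closed minimal hypersurfaces of $\Sph^{n+1}$.

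\emph{Step 3: apply the hypersurface results.}
\begin{enumerate}[label=(\roman*)]
\item For $n=3$ with $S$ constant, Chang's theorem \cite{Chang1993} (cf.\ \cite{dAB1990}) says $M$ is isoparametric. The possible values are then $S\in\{0,3,6,12\}$, corresponding to $g=1,2,3,6$ principal curvatures via $S=(g-1)n$. The bound $S\le 48/7$ leaves $0$, $3$ and $6$. These are the totally geodesic $\Sph^3$, a Clifford hypersurface, and Cartan's minimal hypersurface.
\item For $n\ge4$ with $S$ constant and $0\le S-n\le\frac{26}{59}n$, the second-gap theorem of \cite{Chen2026} gives $S=n$. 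The Chern--do Carmo--Kobayashi/Lawson classification then identifies $M$ as a Clifford minimal hypersurface.
\item Under the pointwise condition $0\le S-n\le\frac1{18}n$, the second pinching theorem of \cite{LXXPublic} yields $S\equiv n$, and the same classification applies. Step 2 already gives $\lambda_2\equiv0$.
\end{enumerate}

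\emph{Step 4: finite covering.} In each case $x(M)$ is a closed embedded $n$-dimensional submanifold, and $x:M\to x(M)$ is an isometric immersion between $n$-manifolds, hence a local isometry. Since $M$ is compact, $x$ is proper, so it is a covering map. Its fibres are discrete subsets of a compact manifold, hence finite.

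\emph{Main obstacle.} The only real care needed is in matching hypotheses: the cited hypersurface theorems must be stated for closed immersed (not necessarily embedded) minimal hypersurfaces, and the constants must line up. The delicate point is the equality $48/7=2n+\frac{2n}{3n-2}$ at $n=3$, which is exactly why the non-strict inequality in \eqref{eq:lu-pinching} matters for (i).
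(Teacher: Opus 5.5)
Your proposal is correct and follows the paper's route. The paper only says that the corollary follows by combining Theorem~\ref{thm:lu-pinching} with the cited hypersurface results; you apply that theorem (using $\frac{48}{7}=2n+\frac{2n}{3n-2}$ at $n=3$, and $\frac{85}{59}n,\frac{19}{18}n<2n$ otherwise) to reach a hypersurface in $\Sph^{n+1}$ with $\lambda_2\equiv0$, and then invoke those results. One harmless slip: for $n=3$ only $g\in\{1,2,3\}$ can occur (and $g=6$ would give $S=15$, not $12$), but the bound $S\le\frac{48}{7}$ discards any larger value anyway.
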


The paper is organized as follows. In Section~\ref{sec:notation}, we introduce the notation, establish the differential identities, and prove a codimension reduction criterion. In Section~\ref{sec:critical-proof}, we first establish a weighted matrix inequality and use it to derive a differential inequality for a nonnegative function of the Gram matrix. Combining this inequality with a pointed limit argument and a spectral inequality, we prove that the function vanishes identically. Theorem~\ref{thm:main} then follows from the codimension reduction criterion. In Section~\ref{sec:lu-pinching}, we consider a normalized form of the same function and extend the matrix inequality. We then apply a pointed limit argument to complete the proof of Theorem~\ref{thm:lu-pinching}.

\section{Preliminaries and auxiliary results}\label{sec:notation}

Let $x:M^n\to\Sph^{n+q}$ be a minimal immersion. We denote by $T_pM$ and $N_pM$ the tangent space at $p\in M$ and the normal space of the immersion in $\Sph^{n+q}$, respectively. The ambient metric and the induced inner products on the tangent, normal, and tensor bundles are denoted by $\langle\cdot,\cdot\rangle$, with corresponding norms $|\cdot|$.

Let $\bar\nabla$ be the Levi--Civita connection of $\Sph^{n+q}$, and let $\nabla$ and $\nabla^\perp$ be the induced tangent and normal connections. The induced connections on tensor bundles are also denoted by $\nabla$, and their rough Laplacians by $\Delta$. For functions, $\Delta f=\operatorname{div}(\nabla f)$.

We use the index conventions
\[
 1\le i,j,k,\ldots\le n,\qquad
 1\le\alpha,\beta,\gamma,\ldots\le q.
\]
Let $\{e_i\}_{i=1}^n$ and $\{\nu_\alpha\}_{\alpha=1}^q$ be local orthonormal frames of the tangent and normal bundles, respectively, and let $\{\theta_i\}_{i=1}^n$ be the dual tangent coframe. Write the second fundamental form as
\[
 A=\sum_{\alpha=1}^q A^\alpha\nu_\alpha,\qquad
 A^\alpha=\sum_{i,j=1}^n A^\alpha_{ij}\,\theta_i\otimes\theta_j,
\]
where $A^\alpha_{ij}=\langle\bar\nabla_{e_i}e_j,\nu_\alpha\rangle=A^\alpha_{ji}$. The corresponding shape operator is also denoted by $A^\alpha$. Set
\[
 S=|A|^2=\sum_{\alpha=1}^q|A^\alpha|^2
   =\sum_{\alpha=1}^q\sum_{i,j=1}^n(A^\alpha_{ij})^2.
\]

The connection forms are given by
\[
 \theta_{ij}(X)=\langle\nabla_Xe_i,e_j\rangle,\qquad
 \omega^\perp_{\alpha\beta}(X)
   =\langle\nabla_X^\perp\nu_\alpha,\nu_\beta\rangle.
\]
The components of $\nabla A$ satisfy
\[
 \sum_k A^\alpha_{ij;k}\theta_k
 =\dd A^\alpha_{ij}
   +\sum_\ell A^\alpha_{\ell j}\theta_{\ell i}
   +\sum_\ell A^\alpha_{i\ell}\theta_{\ell j}
   +\sum_\beta A^\beta_{ij}\omega^\perp_{\beta\alpha}.
\]
We denote the $\nu_\alpha$-component of $\nabla A$ by $\nabla A^\alpha$. Thus
\[
 |\nabla A^\alpha|^2=\sum_{i,j,k}(A^\alpha_{ij;k})^2,\qquad
 |\nabla A|^2=\sum_{\alpha,i,j,k}(A^\alpha_{ij;k})^2.
\]
Since $x$ is minimal, we have $\tr A^\alpha=0$ and hence
\begin{equation}\label{eq:trace-derivative}
 \sum_i A^\alpha_{ii;k}=0.
\end{equation}
The symmetry of $A$ and the Codazzi equation give
\begin{equation}\label{eq:codazzi}
 A^\alpha_{ij;k}=A^\alpha_{ji;k}=A^\alpha_{ik;j}.
\end{equation}

Let $R$ and $R^\perp$ denote the curvature tensors of $\nabla$ and $\nabla^\perp$, respectively. The Gauss and Ricci equations are
\begin{equation}\label{eq:gauss}
 \langle R(e_i,e_j)e_l,e_k\rangle
 =\delta_{ik}\delta_{jl}-\delta_{il}\delta_{jk}
  +\sum_{\alpha=1}^q\bigl(A^\alpha_{ik}A^\alpha_{jl}-A^\alpha_{il}A^\alpha_{jk}\bigr)
\end{equation}
and
\begin{equation}\label{eq:ricci}
 \langle R^\perp(e_i,e_j)\nu_\beta,\nu_\alpha\rangle
 =\sum_{k=1}^n\bigl(A^\alpha_{ik}A^\beta_{jk}-A^\alpha_{jk}A^\beta_{ik}\bigr),
\end{equation}
where $\delta_{ij}$ is the Kronecker delta.

For symmetric matrices, we use the inner product $\langle U,V\rangle=\tr(UV)$. The Gram matrix $\G=(\G_{\alpha\beta})$ of the second fundamental form is defined by
\begin{equation}\label{eq:gram}
 \G_{\alpha\beta}
 =\sum_{i,j}A^\alpha_{ij}A^\beta_{ij}
 =\langle A^\alpha,A^\beta\rangle.
\end{equation}
Since $\G$ is symmetric and positive semidefinite, its eigenvalues $\lambda_1\ge\cdots\ge\lambda_q\ge0$ satisfy
\begin{equation}\label{eq:gram-eigenvalues}
 \sum_{\alpha=1}^q\lambda_\alpha=\tr\G=S.
\end{equation}
For $q\ge2$, $\rank\G\le1$ implies $\lambda_2=0$. For $1\le k\le q$, let $\sigma_k$ be the elementary symmetric polynomial
\[
 \sigma_k=\sum_{1\le\alpha_1<\cdots<\alpha_k\le q}
              \lambda_{\alpha_1}\cdots\lambda_{\alpha_k},
\]
with $\sigma_0=1$ and $\sigma_k=0$ for $k>q$. The power sums are $\tr(\G^j)=\sum_{\alpha=1}^q\lambda_\alpha^j$ for $j\ge1$. Newton's identities take the form
\begin{equation}\label{eq:gram-newton}
 k\sigma_k=\sum_{j=1}^k(-1)^{j-1}\sigma_{k-j}\tr(\G^j),
\end{equation}
for every integer $k\ge1$.

At $p\in M$, the second fundamental form is the symmetric bilinear map $A_p:T_pM\times T_pM\to N_pM$ given by
\[
 A_p(e_i(p),e_j(p))=\sum_{\alpha=1}^q A^\alpha_{ij}(p)\nu_\alpha(p).
\]
The first normal space at $p$ is the span of its values \cite{Allendoerfer1939,Erbacher1971}:
\[
 N_1(p):=\Span\{A_p(u,v):u,v\in T_pM\}\subset N_pM.
\]
Its dimension satisfies
\[
 \dim N_1(p)=\dim\Span\{A^1_p,\ldots,A^q_p\}=\rank\G(p).
\]

Define
\begin{equation}\label{eq:defect}
 E:=S^2-\tr(\G^2)
  =2\sum_{\alpha<\beta}
       \bigl(|A^\alpha|^2|A^\beta|^2
             -\langle A^\alpha,A^\beta\rangle^2\bigr)\ge0.
\end{equation}
If $S(p)>0$, then $E(p)=0$ is equivalent to $\dim N_1(p)=1$.

Fix $p\in M$ and choose an orthonormal normal frame that diagonalizes $\G(p)$, so that
\begin{equation}\label{eq:normal-orthogonality}
 \G_{\alpha\beta}=\langle A^\alpha,A^\beta\rangle
 =\delta_{\alpha\beta}|A^\alpha|^2,
\end{equation}
for all $1\le\alpha,\beta\le q$. In this frame, $E=2\sum_{\alpha<\beta}|A^\alpha|^2|A^\beta|^2$.

By \eqref{eq:ricci}, flatness of the normal bundle implies $A^\alpha A^\beta=A^\beta A^\alpha$ for all $1\le\alpha,\beta\le q$. We may therefore choose an orthonormal tangent frame that diagonalizes all the shape operators simultaneously. The nonzero shape operators are pairwise orthogonal by \eqref{eq:normal-orthogonality} and hence linearly independent. Since the immersion is minimal, their diagonal entries satisfy $\sum_{i=1}^n A^\alpha_{ii}=0$. This is a single nontrivial linear constraint on the $n$ diagonal entries, so the space of trace-free diagonal matrices has dimension $n-1$. Thus
\begin{equation}\label{eq:gram-rank-bound}
 \rank\G\le\min\{q,n-1\}.
\end{equation}
Under this flatness assumption, at a point $p$ with $E(p)>0$, set $r=\dim N_1(p)=\rank\G(p)$, so that $2\le r\le\min\{q,n-1\}$. Relabel the normal frame so that $A^\alpha(p)\ne0$ for $1\le\alpha\le r$ and $A^\alpha(p)=0$ for $r<\alpha\le q$. In calculations involving only the nonzero shape operators, normal indices range from $1$ to $r$.

\begin{lemma}[Ge--Li--Zhang \cite{GLZFlat2026}]
\label{lem:symmetry}
Let $x:M^n\to\Sph^{n+q}$ be a minimal immersion with flat normal bundle. Then
\[
 \sum_{i,j}A^\alpha_{ij}A^\beta_{ij;k}
 =\sum_{i,j}A^\beta_{ij}A^\alpha_{ij;k}.
\]
\end{lemma}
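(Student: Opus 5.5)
We will prove the identity by differentiating the commutation relation $A^\alpha A^\beta=A^\beta A^\alpha$, which holds everywhere by the Ricci equation \eqref{eq:ricci} and flatness of the normal bundle, and then taking a trace. Throughout we work with the tensorial covariant derivatives $A^\alpha_{ij;k}$, which involve both $\nabla$ and $\nabla^\perp$.

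\textbf{Step 1 (differentiate the commutator).} Since $R^\perp\equiv0$, the tensor $\sum_k\bigl(A^\alpha_{ik}A^\beta_{kj}-A^\beta_{ik}A^\alpha_{kj}\bigr)$ vanishes identically for all $\alpha,\beta$. Its covariant derivative in the direction $e_l$ is computed by the Leibniz rule. The normal-connection terms combine consistently because the whole expression is antisymmetric and bilinear in the normal indices; equivalently, it is the tensor $R^\perp$ itself, so its full covariant derivative vanishes. This gives
\[
 \sum_k\bigl(A^\alpha_{ik;l}A^\beta_{kj}+A^\alpha_{ik}A^\beta_{kj;l}
 -A^\beta_{ik;l}A^\alpha_{kj}-A^\beta_{ik}A^\alpha_{kj;l}\bigr)=0.
\]
In matrix form, with $A^\alpha_{;l}=(A^\alpha_{ij;l})$, this reads $[A^\alpha_{;l},A^\beta]=[A^\beta_{;l},A^\alpha]$.

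\textbf{Step 2 (use Codazzi and a trace).} The matrix identity alone does not yield the desired scalar identity, so we instead compute at a point, in a tangent frame that simultaneously diagonalizes all $A^\alpha$, say $A^\alpha_{ij}=\mu^\alpha_i\delta_{ij}$. The target identity then becomes
\[
 \sum_i\mu^\alpha_iA^\beta_{ii;k}=\sum_i\mu^\beta_iA^\alpha_{ii;k}.
\]
Taking the $(i,k)$ entry of Step 1 with $l=k$ and $i\ne k$ gives
\[
 A^\alpha_{ik;k}(\mu^\beta_k-\mu^\beta_i)=A^\beta_{ik;k}(\mu^\alpha_k-\mu^\alpha_i).
\]
By Codazzi \eqref{eq:codazzi}, $A^\alpha_{ik;k}=A^\alpha_{kk;i}$.

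\textbf{Step 3 (conclude).} The aim is to write $\sum_i\mu^\alpha_iA^\beta_{ii;k}$ in a manifestly symmetric form. Relabeling the identity of Step 2 yields, for $i\ne k$,
\[
 A^\alpha_{ii;k}(\mu^\beta_i-\mu^\beta_k)=A^\beta_{ii;k}(\mu^\alpha_i-\mu^\alpha_k).
\]
We now compute
\[
 \sum_i\mu^\alpha_iA^\beta_{ii;k}-\sum_i\mu^\beta_iA^\alpha_{ii;k}
 =\sum_i\bigl[(\mu^\alpha_i-\mu^\alpha_k)A^\beta_{ii;k}-(\mu^\beta_i-\mu^\beta_k)A^\alpha_{ii;k}\bigr]
 +\mu^\alpha_k\sum_iA^\beta_{ii;k}-\mu^\beta_k\sum_iA^\alpha_{ii;k}.
\]
The last two sums vanish by \eqref{eq:trace-derivative}. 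In the first sum, the $i=k$ term is zero, and each $i\ne k$ term vanishes by the relabeled identity. Hence the difference is zero, which proves the lemma.

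\textbf{Main obstacle.} The delicate point is Step 1: justifying that the covariant derivative of the commutator involves exactly the $A_{;l}$ terms, with no leftover connection terms. The cleanest route is to note that the commutator is (up to sign) the tensor $R^\perp$, which vanishes identically, so its total covariant derivative is zero and the Leibniz rule for the induced connection produces exactly the displayed expression.
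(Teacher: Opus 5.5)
Your proof is correct. Step 1 is properly justified. The commutator $\sum_m(A^\alpha_{im}A^\beta_{mj}-A^\beta_{im}A^\alpha_{mj})$ is a component of a tensor: it is a contraction of $A\otimes A$, and by \eqref{eq:ricci} it agrees with a component of $R^\perp$. This tensor vanishes identically, so its covariant derivative vanishes, and the Leibniz rule gives exactly your displayed identity. The diagonal-frame computation in Steps 2--3 also checks out. Working in a tangent frame that diagonalizes all $A^\alpha$ at the point is legitimate, because $\sum_{i,j}A^\alpha_{ij}A^\beta_{ij;k}$ is a full contraction in $i,j$ and is linear in $e_k$.

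The paper does not prove this lemma; it quotes it from Ge--Li--Zhang, so there is no in-paper argument to compare against.

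Your remark that the identity of Step 1 ``alone does not yield'' the scalar identity undersells it. Take the $(i,k)$ entry of that identity with derivative index $l=i$, and sum over $i$:
\[
 \sum_{i,m}\bigl(A^\alpha_{im;i}A^\beta_{mk}+A^\alpha_{im}A^\beta_{mk;i}
 -A^\beta_{im;i}A^\alpha_{mk}-A^\beta_{im}A^\alpha_{mk;i}\bigr)=0.
\]
By \eqref{eq:codazzi} and \eqref{eq:trace-derivative}, $\sum_iA^\alpha_{im;i}=\sum_iA^\alpha_{ii;m}=0$. Also $A^\beta_{mk;i}=A^\beta_{im;k}$ by Codazzi. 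So the display reduces to $\sum_{i,m}A^\alpha_{im}A^\beta_{im;k}=\sum_{i,m}A^\beta_{im}A^\alpha_{im;k}$.

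Equivalently, $\sum_{i,j}A^\alpha_{ij}A^\beta_{ij;k}$ is the $k$-th component of the divergence of $A^\alpha A^\beta$, and flatness makes this divergence symmetric in $\alpha,\beta$. This frame-free contraction uses the same three ingredients as your proof: the vanishing derivative of the commutator, Codazzi, and minimality. It avoids the simultaneous diagonalization and the relabeling step.

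Your route gives something stronger than the lemma needs: the pointwise relations $A^\alpha_{ii;k}(\mu^\beta_i-\mu^\beta_k)=A^\beta_{ii;k}(\mu^\alpha_i-\mu^\alpha_k)$ for $i\ne k$, which hold before any summation.
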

Ge--Li--Zhang \cite[Lemma~5.3]{GLZFlat2026} established the differential identity for $E$ when $S$ is constant. For general $S$, we obtain the following formula with an additional term $\frac{1}{2}|\nabla S|^2$.

\begin{lemma}
\label{lem:evolution}
Let $x:M^n\to\Sph^{n+q}$ be a minimal immersion with flat normal bundle. Then
\begin{equation}\label{eq:simons}
 \frac{1}{2}\Delta S=|\nabla A|^2+nS-\tr(\G^2).
\end{equation}
Fix $p\in M$ and choose an orthonormal normal frame that diagonalizes $\G(p)$. Then
\begin{equation}\label{eq:E-evolution}
 \begin{aligned}
 \frac{1}{4}\Delta E
 =&\sum_\alpha\bigl(S-|A^\alpha|^2\bigr)|\nabla A^\alpha|^2
       +\frac{1}{2}|\nabla S|^2-2\sum_{\alpha,\beta,k}
       \left(\sum_{i,j}A^\alpha_{ij}A^\beta_{ij;k}\right)^2\\
 &+\frac{2n-S}{2}E
       +3\sum_{\alpha<\beta<\gamma}|A^\alpha|^2|A^\beta|^2|A^\gamma|^2.
 \end{aligned}
\end{equation}
\end{lemma}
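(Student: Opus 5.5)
We prove \eqref{eq:simons} first and then deduce \eqref{eq:E-evolution} from it. For \eqref{eq:simons} we apply the standard Simons-type computation. By the Codazzi symmetry \eqref{eq:codazzi} and the Ricci commutation formula, $\sum_k A^\alpha_{ij;kk}$ equals $\sum_k A^\alpha_{kk;ij}$ plus curvature terms. The first term vanishes by \eqref{eq:trace-derivative}. The curvature terms are expanded with \eqref{eq:gauss} and \eqref{eq:ricci}. Flatness of the normal bundle kills every normal-curvature contribution, and commutativity $A^\alpha A^\beta=A^\beta A^\alpha$ removes the commutator terms $|[A^\alpha,A^\beta]|^2$. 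This gives
\[
 \sum_{i,j}A^\alpha_{ij}\Delta A^\alpha_{ij}=n|A^\alpha|^2-\sum_\beta\langle A^\alpha,A^\beta\rangle^2 .
\]
Summing over $\alpha$ and adding $|\nabla A|^2$ yields \eqref{eq:simons}, since $\sum_{\alpha,\beta}\G_{\alpha\beta}^2=\tr(\G^2)$.

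For \eqref{eq:E-evolution} we write $E=S^2-\tr(\G^2)$ and compute $\frac14\Delta(S^2)$ and $\frac14\Delta\tr(\G^2)$ separately. We have $\frac14\Delta S^2=\frac12|\nabla S|^2+\frac12 S\Delta S$, and the second term is evaluated by \eqref{eq:simons}. For the Gram part we use $\frac14\Delta\sum\G_{\alpha\beta}^2=\frac12\sum|\nabla\G_{\alpha\beta}|^2+\frac12\sum\G_{\alpha\beta}\Delta\G_{\alpha\beta}$. Here
\[
 \nabla_k\G_{\alpha\beta}=\sum_{i,j}\bigl(A^\alpha_{ij;k}A^\beta_{ij}+A^\alpha_{ij}A^\beta_{ij;k}\bigr)=2\sum_{i,j}A^\alpha_{ij}A^\beta_{ij;k},
\]
by Lemma~\ref{lem:symmetry}. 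This produces the term $-2\sum(\sum A^\alpha A^\beta_{;k})^2$.

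Next, $\Delta\G_{\alpha\beta}=\langle\Delta A^\alpha,A^\beta\rangle+\langle A^\alpha,\Delta A^\beta\rangle+2\langle\nabla A^\alpha,\nabla A^\beta\rangle$. We now work in the frame diagonalizing $\G(p)$, so only $\alpha=\beta$ survives after multiplying by $\G_{\alpha\beta}$. This gives $\frac12\sum_\alpha|A^\alpha|^2\bigl(2\langle A^\alpha,\Delta A^\alpha\rangle+2|\nabla A^\alpha|^2\bigr)$. Using the pointwise identity $\langle A^\alpha,\Delta A^\alpha\rangle=n|A^\alpha|^2-|A^\alpha|^4$ in this frame, the gradient terms combine to $\frac12 S|\nabla A|^2-\sum_\alpha|A^\alpha|^2|\nabla A^\alpha|^2=\sum_\alpha(S-|A^\alpha|^2)|\nabla A^\alpha|^2$ after the factor bookkeeping. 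This gives the first line of \eqref{eq:E-evolution}.

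The zero-order terms remain. Writing $a_\alpha=|A^\alpha|^2$, we collect
\[
 \tfrac12 S(nS-\textstyle\sum a_\alpha^2)-\sum_\alpha a_\alpha(n a_\alpha-a_\alpha^2).
\]
We expand this in elementary symmetric functions of the $a_\alpha$, using $E=2\sigma_2$, $\sum a^2=S^2-2\sigma_2$, $\sum a^3=S^3-3S\sigma_2+3\sigma_3$, which are the Newton identities \eqref{eq:gram-newton}. The target is $\frac{2n-S}{2}E+3\sigma_3$.

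The main obstacle is this last bookkeeping. The coefficients $\frac12$ versus $\frac14$ must come out consistently, and we must check that the zero-order terms genuinely reduce to $n\sigma_2-S\sigma_2+3\sigma_3$ with no leftover $S^3$ terms. We would verify this by carrying out the expansion carefully and cross-checking on the Example~\ref{example:3-sphere} configuration, where $a_\alpha=n$ and all derivatives vanish.
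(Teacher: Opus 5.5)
Your route is the same as the paper's: the flat-normal-bundle Simons identity, then separate computations of $\frac14\Delta(S^2)$ and $\frac14\Delta\tr(\G^2)$ using $\nabla_k\G_{\alpha\beta}=2\sum_{i,j}A^\alpha_{ij}A^\beta_{ij;k}$ from Lemma~\ref{lem:symmetry}, evaluation in the frame diagonalizing $\G(p)$, and Newton's identities for the zero-order terms. Your derivation of \eqref{eq:simons} and your handling of the Gram part are fine. The gap is in the step you deferred, and as you set it up it does not close. Since \eqref{eq:simons} computes $\frac12\Delta S$, not $\Delta S$, the correct expansion is
\[
 \tfrac14\Delta(S^2)=\tfrac12|\nabla S|^2+S\cdot\tfrac12\Delta S
 =\tfrac12|\nabla S|^2+S|\nabla A|^2+S\bigl(nS-\tr(\G^2)\bigr).
\]
You instead used $\frac12S|\nabla A|^2$ and $\frac12S\bigl(nS-\sum_\alpha a_\alpha^2\bigr)$, with your $a_\alpha=|A^\alpha|^2$. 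With that coefficient the gradient terms come out as $\sum_\alpha(\frac12S-a_\alpha)|\nabla A^\alpha|^2$. Your zero-order expression equals
\[
 \frac{2n-S}{2}E+3\sigma_3-\frac12S\bigl(nS-\tr(\G^2)\bigr),
\]
so the leftover $S^3$ and $S^2$ terms you were worried about are genuinely present. Your restated target is also off by a factor: $\frac{2n-S}{2}E=(2n-S)\sigma_2$, not $(n-S)\sigma_2$.

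With the factor corrected, the gradient terms are $S|\nabla A|^2-\sum_\alpha a_\alpha|\nabla A^\alpha|^2=\sum_\alpha(S-a_\alpha)|\nabla A^\alpha|^2$. The zero-order part is $nS^2-(S+n)\tr(\G^2)+\tr(\G^3)$. Substituting $\tr(\G^2)=S^2-2\sigma_2$ and $\tr(\G^3)=S^3-3S\sigma_2+3\sigma_3$ gives exactly $(2n-S)\sigma_2+3\sigma_3=\frac{2n-S}{2}E+3\sigma_3$, which is the paper's computation.

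Your proposed cross-check could not have detected the error. In Example~\ref{example:3-sphere} one has $S=2n$, $\tr(\G^2)=nS$ and $\sigma_3=0$, so both the target and the spurious term $\frac12S(nS-\tr(\G^2))$ vanish. A rank-one configuration ($a_1=S>0$, all other $a_\alpha=0$) is a better test. There the target and the correct zero-order expression both vanish, while yours equals $-\frac12S^2(n-S)$.
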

\begin{proof}
Under the flatness assumption, the Simons formula \cite{GLZFlat2026,Simons1968} takes the form
\[
 \Delta A^\alpha=nA^\alpha-\sum_\beta\G_{\alpha\beta}A^\beta.
\]
Taking the inner product with $A^\alpha$ and summing over $\alpha$ gives \eqref{eq:simons}. By direct computation, we obtain
\[
 \frac{1}{4}\Delta(S^2)
 =S|\nabla A|^2+nS^2-S\tr(\G^2)+\frac{1}{2}|\nabla S|^2.
\]

Differentiating $\G$ and using Lemma~\ref{lem:symmetry} and the Simons formula gives
\begin{align*}
 (\nabla_k\G)_{\alpha\beta}
 &=2\sum_{i,j}A^\alpha_{ij}A^\beta_{ij;k},\\
 (\Delta\G)_{\alpha\beta}
 &=2n\G_{\alpha\beta}-2(\G^2)_{\alpha\beta}
       +2\sum_{i,j,k}A^\alpha_{ij;k}A^\beta_{ij;k}.
\end{align*}
Since $\tr(\G^2)=|\G|^2$, the tensor product rule gives
\[
 \frac{1}{2}\Delta\tr(\G^2)
 =\langle\Delta\G,\G\rangle+|\nabla\G|^2.
\]
Substituting the preceding formulas and using \eqref{eq:normal-orthogonality}, we obtain
\begin{align*}
 \frac{1}{4}\Delta\tr(\G^2)=n\tr(\G^2)-\tr(\G^3)+\sum_\alpha|A^\alpha|^2|\nabla A^\alpha|^2+2\sum_{\alpha,\beta,k}\left(\sum_{i,j}A^\alpha_{ij}A^\beta_{ij;k}\right)^2.
\end{align*}

Subtracting the formulas for $\frac{1}{4}\Delta(S^2)$ and $\frac{1}{4}\Delta\tr(\G^2)$, we obtain
\[
\begin{aligned}
 \frac{1}{4}\Delta E
 &=\sum_\alpha(S-|A^\alpha|^2)|\nabla A^\alpha|^2
       +\frac{1}{2}|\nabla S|^2
       -2\sum_{\alpha,\beta,k}\left(\sum_{i,j}A^\alpha_{ij}A^\beta_{ij;k}\right)^2\\
 &\quad+nS^2-(S+n)\tr(\G^2)+\tr(\G^3).
\end{aligned}
\]
Substituting $\sigma_1=S$ and $2\sigma_2=E$ into \eqref{eq:gram-newton} with $k=3$, we obtain
\[
\begin{aligned}
 \tr(\G^3)=S\tr(\G^2)-\frac{1}{2}SE+3\sigma_3=\frac{3}{2}S\tr(\G^2)-\frac{1}{2}S^3
       +3\sum_{\alpha<\beta<\gamma}|A^\alpha|^2|A^\beta|^2|A^\gamma|^2.
\end{aligned}
\]
Hence
\[
 \begin{aligned}
 nS^2-(S+n)\tr(\G^2)+\tr(\G^3)=\frac{2n-S}{2}E
       +3\sum_{\alpha<\beta<\gamma}|A^\alpha|^2|A^\beta|^2|A^\gamma|^2.
 \end{aligned}
\]
Substituting the above identity into the formula for $\frac{1}{4}\Delta E$, we obtain \eqref{eq:E-evolution}.
\end{proof}

We now prove that $E\equiv0$ implies codimension reduction. We first obtain local containment in a totally geodesic subsphere using Erbacher's method \cite[Section~2]{Erbacher1971} and then extend it to all of $M$ by unique continuation for height functions.

\begin{lemma}\label{lem:codimension-reduction}
Let $x:M^n\to\Sph^{n+q}$ be a minimal immersion of a connected manifold with flat normal bundle, where $n\ge2$ and $q\ge1$. If $E\equiv0$, then $x(M)$ is contained in a fixed totally geodesic subsphere $\Sph^{n+1}\subset\Sph^{n+q}$.
\end{lemma}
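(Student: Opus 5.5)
The plan is to apply Erbacher's reduction on the open set where the second fundamental form is nonzero, and then pass to all of $M$ by unique continuation.

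\emph{Splitting $M$.} Let $U=\{p\in M: S(p)>0\}$. If $U=\emptyset$, then $M$ is totally geodesic. Its image then lies in a totally geodesic $\Sph^n$, hence in some $\Sph^{n+1}$. So assume $U\neq\emptyset$. On $U$ the hypothesis $E\equiv0$ gives $\dim N_1=1$. Hence $N_1$ is a smooth rank-one subbundle of the normal bundle over $U$, spanned locally by a unit normal field $\xi$ with $A=A^\xi\xi$.

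\emph{Parallelism of $N_1$.} Next I would show that $N_1$ is parallel on $U$, i.e.\ $\nabla^\perp\xi\in N_1$. Take $\eta\perp N_1$, so that $A^\eta=0$ on $U$. Differentiating $\langle A(e_i,e_j),\eta\rangle=0$ gives
\[
 A^\eta_{ij;k}=-A^\xi_{ij}\,\langle\nabla^\perp_{e_k}\eta,\xi\rangle .
\]
By the Codazzi equation \eqref{eq:codazzi} the left side is totally symmetric in $i,j,k$. So $A^\xi\otimes\omega$ is totally symmetric, where $\omega(X)=\langle\nabla^\perp_X\eta,\xi\rangle$. Since $A^\xi$ is trace-free and nonzero on $U$, it has rank at least $2$. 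A symmetric tensor of the form $A^\xi_{ij}\omega_k$ with $\operatorname{rank}A^\xi\ge2$ forces $\omega=0$. To see this, contract with a vector $v$ in the kernel of $\omega$ (or use rank $\ge2$ directly), obtaining $A^\xi(v,\cdot)\omega=\omega(v)A^\xi(\cdot)$. Alternatively one can use minimality: tracing over $i=j$ gives $0=\sum_i A^\xi_{ik}\omega_i$, so $A^\xi\omega^\sharp=0$, and symmetry then yields $A^\xi_{ij}\omega_k\omega_k=A^\xi_{ik}\omega_j\omega_k=0$, hence $\omega=0$. This is the key computation, and it holds for $n\ge2$.

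\emph{Local containment.} So on each component of $U$, the first normal space is parallel of rank one. Erbacher's argument then shows that the affine span of $x$ together with $N_1$ is constant. Concretely, the subspace $V=\Span\{x,\,dx(T_pM),\,\xi\}\subset\R^{n+q+1}$ has dimension $n+2$ and is constant along the component. Indeed, the derivatives of $x$, $e_i$ and $\xi$ stay in $V$, using $\bar\nabla_X\xi=-A^\xi X$ (since $\nabla^\perp\xi=0$ after normalizing) and the Gauss formula $D_Xe_i=\nabla_Xe_i+A(X,e_i)-\langle X,e_i\rangle x$. Hence each component of $U$ maps into $V\cap\Sph^{n+q}$, a totally geodesic $\Sph^{n+1}$.

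\emph{Globalization.} Fix one component $U_0$ and its subsphere, cut out by unit vectors $a_1,\dots,a_{q-1}$ orthogonal to $V$. The height functions $f_s=\langle x,a_s\rangle$ satisfy $\Delta f_s=-nf_s$ on all of $M$, since $x$ is minimal. They vanish on the open set $U_0$. By unique continuation for this elliptic equation on the connected manifold $M$, $f_s\equiv0$ on $M$. Hence all of $x(M)$ lies in the fixed $\Sph^{n+1}$.

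The main obstacle I expect is handling the regularity of $N_1$ across the boundary of $U$, where $S=0$. Unique continuation avoids this: it suffices to work on one nonempty open subset of $U$.
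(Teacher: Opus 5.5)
Your proof is correct, and its overall structure matches the paper's: a local Erbacher-type reduction on a nonempty open subset of $\{S>0\}$, followed by unique continuation for the height functions $u_a=\langle x,a\rangle$, which satisfy $(\Delta+n)u_a=0$. The difference is in the two local steps.

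\emph{Parallelism of $N_1$.} The paper invokes the Ge--Li--Zhang identity (Lemma~\ref{lem:symmetry}), which is stated under the flatness hypothesis: $0=\sum_{i,j}A^\gamma_{ij}A^1_{ij;k}=\sum_{i,j}A^1_{ij}A^\gamma_{ij;k}=S\,\omega^\perp_{1\gamma}(e_k)$. You instead use only the Codazzi symmetry of $A^\eta_{ij;k}=-A^\xi_{ij}\omega_k$ together with $\tr A^\xi=0$. This gives $A^\xi\omega^\sharp=0$ and then $|\omega|^2A^\xi=0$. That is the classical self-contained argument.

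\emph{Local containment.} The paper uses flatness and simple connectedness of $U$ to complete $\nu_1$ to a parallel orthonormal frame, and then shows that $\nu_2,\dots,\nu_q$ are constant vectors. You show directly that $V=\Span\{x,dx(TM),\xi\}$ is closed under differentiation, which needs neither hypothesis.

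Your route therefore makes clear that the flat normal bundle assumption plays no real role in this lemma. This is consistent with the fact that $E\equiv0$ forces $\rank\G\le1$ pointwise, so all shape operators are proportional, and the Ricci equation then gives $R^\perp=0$ automatically. The paper's route is shorter only because Lemma~\ref{lem:symmetry} is already in hand for its later computations.
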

\begin{proof}
The case $q=1$ is immediate, so assume $q\ge2$. If $S\equiv0$, the immersion is totally geodesic and its image lies locally in a great $n$-sphere. Hence $x(U)\subset V$ for some nonempty open set $U\subset M$ and a fixed $(n+2)$-dimensional linear subspace $V\subset\R^{n+q+1}$.

If $S\not\equiv0$, choose a sufficiently small simply connected nonempty open set $U\subset\{S>0\}$. The condition $E=0$ implies that the first normal spaces on $U$ form a smooth line bundle. Choose an orthonormal normal frame on $U$ so that $N_1(p)=\Span\{\nu_1(p)\}$ for every $p\in U$. For $2\le\gamma\le q$, we have $A^\gamma=0$, and the covariant derivative formula gives $A^\gamma_{ij;k}=\omega^\perp_{1\gamma}(e_k)A^1_{ij}$. From Lemma~\ref{lem:symmetry}, we have
\[
 0=\sum_{i,j}A^\gamma_{ij}A^1_{ij;k}
  =\sum_{i,j}A^1_{ij}A^\gamma_{ij;k}
  =S\,\omega^\perp_{1\gamma}(e_k).
\]
Since $S>0$ on $U$, we have $\nabla^\perp\nu_1=0$.

By flatness of the normal bundle and simple connectedness of $U$, we may complete $\nu_1$ to a parallel orthonormal normal frame. For $2\le\gamma\le q$, the equalities $A^\gamma=0$ and $\nabla^\perp\nu_\gamma=0$, together with the Weingarten formula, give $\widetilde\nabla_X\nu_\gamma=0$ for every tangent vector field $X$, where $\widetilde\nabla$ is the Euclidean connection. Thus $\nu_2,\ldots,\nu_q$ are constant orthonormal vectors in $\R^{n+q+1}$, and their common orthogonal complement is a fixed $(n+2)$-dimensional subspace $V$. The relations $\langle x,\nu_\gamma\rangle=0$ for $2\le\gamma\le q$ give $x(U)\subset V$.

In either case, for $a\in V^\perp$, the height function $u_a=\langle x,a\rangle$ vanishes on $U$. By minimality, $\Delta x=-nx$, so
\[
 (\Delta+n)u_a=\langle\Delta x+nx,a\rangle=0.
\]
By unique continuation, $u_a\equiv0$ on the connected manifold $M$. As $a\in V^\perp$ is arbitrary, $x(M)\subset V\cap\Sph^{n+q}$, a fixed totally geodesic $(n+1)$-sphere.
\end{proof}

\section{Proof of Theorem~\ref{thm:main}}\label{sec:critical-proof}\label{sec:global}\label{sec:matrix}

Let $r\ge2$ and $a_1,\ldots,a_r>0$ satisfy $\sum_{i=1}^r a_i=1$. Set
\[
 \Lambda=\diag(a_1,\ldots,a_r),\qquad D=\Lambda(I_r-\Lambda),
\]
where $D$ is positive definite. Define the quadratic form $Q_\Lambda$ on the space of all real symmetric $r\times r$ matrices by
\[
 Q_\Lambda(T)
 =3\tr(T^2D)-2\tr(\Lambda T\Lambda T)
       +2\bigl(\tr(\Lambda T)\bigr)^2.
\]

\begin{lemma}\label{lem:matrix}
If $T$ is a real symmetric $r\times r$ matrix satisfying $\tr(DT)=0$, then
\begin{equation}\label{eq:matrix}
 2TDT\preceq Q_\Lambda(T)I_r,
\end{equation}
where $\preceq$ denotes the positive semidefinite order.
\end{lemma}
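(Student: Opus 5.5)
The plan is to test \eqref{eq:matrix} against an arbitrary unit vector $v\in\R^r$. Write $T=(t_{ij})$, $d_i=a_i(1-a_i)$, and $w=Tv$. The claim becomes
\[
 2\sum_k d_k w_k^2\le Q_\Lambda(T)\qquad\text{for all }|v|=1 .
\]
Expanding $Q_\Lambda$ in coordinates and symmetrizing in $(i,j)$ gives
\[
 Q_\Lambda(T)=\sum_{i,j}t_{ij}^2\Bigl(\tfrac32(d_i+d_j)-2a_ia_j\Bigr)+2\Bigl(\sum_i a_it_{ii}\Bigr)^2 .
\]
I would separate $T=T_0+T_1$, with $T_0=\diag(t_{11},\dots,t_{rr})$ and $T_1$ the off-diagonal part. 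The constraint $\tr(DT)=0$ only involves $T_0$: it reads $\sum_i d_it_{ii}=0$.

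\textbf{Step 1: a first bound and why it is not enough.} By Cauchy--Schwarz, $w_k^2\le\sum_j t_{kj}^2$, so the left side is at most $2\tr(T^2D)=\sum_{i,j}t_{ij}^2(d_i+d_j)$. After subtracting, the off-diagonal coefficient in $Q_\Lambda$ becomes $\tfrac12(d_i+d_j)-2a_ia_j$. For $r=2$, $a_1=a_2=\tfrac12$ this equals $-\tfrac14<0$, so this crude bound fails. It must be refined by not spending the full Cauchy--Schwarz loss on every row.

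\textbf{Step 2: a sharper estimate.} I would use $\lambda_{\max}(TDT)=\lambda_{\max}(D^{1/2}T^2D^{1/2})$ together with the elementary fact that for a PSD matrix the top eigenvalue is at most the trace minus the remaining eigenvalues. Concretely, I would bound $2w^\top Dw$ by
\[
 2\max_k d_k\Bigl(t_{kk}v_k+\sum_{j\ne k}t_{kj}v_j\Bigr)^2 ,
\]
plus the cross contributions. For each off-diagonal pair $(i,j)$ I would use the two-row estimate
\[
 d_i(t_{ij}v_j)^2+d_j(t_{ij}v_i)^2\le\max(d_i,d_j)\,t_{ij}^2 .
\]
This gives only a $\max$ rather than a sum, so the off-diagonal entries cost at most $2\max(d_i,d_j)t_{ij}^2$. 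Comparing with the off-diagonal coefficient $3(d_i+d_j)-4a_ia_j$ of $t_{ij}^2$ in $Q_\Lambda$, I would verify
\[
 3(d_i+d_j)-4a_ia_j\ge 2\max(d_i,d_j)\qquad\text{whenever }a_i+a_j\le1 .
\]
This should follow from $d_i=a_i(1-a_i)\ge a_ia_j$ and the analogous bound for $d_j$. The cross terms between diagonal and off-diagonal parts of $w_k$ would be absorbed by AM--GM, splitting the surplus between the two families.

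\textbf{Step 3: the diagonal part, where the constraint enters.} For $T=T_0$ the inequality reads
\[
 2\max_k d_kt_{kk}^2\le\sum_i(3d_i-2a_i^2)t_{ii}^2+2\Bigl(\sum_i a_it_{ii}\Bigr)^2,\qquad\sum_i d_it_{ii}=0 .
\]
Fix the index $k$ attaining the maximum. The constraint gives $d_kt_{kk}=-\sum_{i\ne k}d_it_{ii}$. By Cauchy--Schwarz with weights $d_i$,
\[
 d_k^2t_{kk}^2\le\Bigl(\sum_{i\ne k}d_i\Bigr)\Bigl(\sum_{i\ne k}d_it_{ii}^2\Bigr),
\]
and $\sum_{i\ne k}d_i\le\sum_{i\ne k}a_i=1-a_k$. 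Hence $\sum_{i\ne k}d_it_{ii}^2\ge a_kd_kt_{kk}^2$. This supplies the missing mass from the other rows and lets me compare with $(3d_k-2a_k^2)t_{kk}^2$.

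\textbf{Main obstacle.} I expect the hardest part to be combining Steps 2 and 3 with the diagonal/off-diagonal cross terms so that no constant is lost. The case $r=2$, $a_1=a_2=\tfrac12$ is nearly extremal and should be checked by hand first, since it dictates how the AM--GM weights must be chosen. If the splitting proves too lossy, the fallback is to reduce to $v$ an eigenvector of $TDT$ and diagonalize $T$ in the problem directly, using only the two-index structure above.
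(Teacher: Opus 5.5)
Your reduction to $2\sum_k d_kw_k^2\le Q_\Lambda(T)$ for unit $v$ is correct, and so are the coordinate expansion of $Q_\Lambda$ and the pairwise inequality $3(d_i+d_j)-4a_ia_j\ge 2\max(d_i,d_j)$. But the proposal stops where the lemma becomes nontrivial: the cross terms of $2\sum_kd_kw_k^2$. These are $4d_kt_{kk}t_{kj}v_kv_j$ and also $4d_kt_{kj}t_{kl}v_jv_l$ with $j,l\ne k$ distinct, which you do not mention at all.

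Worse, the scheme as designed cannot be completed. Steps 2 and 3 replace the square terms by the $v$-independent quantities $2\max_kd_kt_{kk}^2$ and $2\max(d_i,d_j)t_{ij}^2$. This discards exactly the slack (the weights $v_k^2$ and $v_i^2+v_j^2$) that has to pay for the cross terms. Concretely, take $r=3$, $a=(\frac{1-\epsilon}{2},\frac{1-\epsilon}{2},\epsilon)$, $t_{11}=-t_{22}=t_{13}=t_{31}=1$ with all other entries $0$ (admissible since $d_1=d_2$), and $v=\frac{1}{\sqrt2}(1,0,1)$. Then:
$Q_\Lambda(T)=\frac{5+12\epsilon-17\epsilon^2}{4}$; your Step 2--3 bounds for the square terms total $4d_1=1-\epsilon^2$; and the only cross term is $4d_1t_{11}t_{13}v_1v_3=\frac{1-\epsilon^2}{2}$.
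Their sum exceeds $Q_\Lambda(T)$ by $\frac{(1-\epsilon)(1-11\epsilon)}{4}>0$ whenever $\epsilon<\frac1{11}$. The true value $2v^{\mathsf T}TDTv=4d_1+d_3$ lies below $Q_\Lambda(T)$, so no choice of AM--GM weights can close the argument.

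The reason is structural. For $r=2$ the lemma is an identity: $2TDT=Q_\Lambda(T)I_2=2a_1a_2(t_{11}^2+t_{12}^2)I_2$ for every admissible $T$. So near $r=2$ there is essentially no surplus, and the estimate must treat the $v$-weights and the cross terms jointly.

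Three smaller points. Step 3's inequality is true, but your bound $\sum_{i\ne k}d_it_{ii}^2\ge a_kd_kt_{kk}^2$ does not prove it: $3d_k-2a_k^2=a_k(3-5a_k)$ can be negative, and the terms $-2a_i^2t_{ii}^2$ ($i\ne k$) and $2(\sum_ia_it_{ii})^2$ are left uncontrolled. The first bound in Step 2 is not correct as written, since $\sum_kd_kw_k^2$ is not controlled by $\max_kd_kw_k^2$. The fallback of diagonalizing $T$ destroys the diagonal form of $\Lambda$ and $D$ on which the rest relies.

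The paper handles the cross terms exactly by linearizing in $T$: it proves $2(w^{\mathsf T}\sqrt D\,Tv)^2\le Q_\Lambda(T)|w|^2$ for all $w$. $Q_\Lambda$ is positive definite on $\{\tr(DT)=0\}$ and does not couple diagonal and off-diagonal entries. Hence the supremum over $T$ of $2(w^{\mathsf T}\sqrt D\,Tv)^2/Q_\Lambda(T)$ is an explicit dual quadratic form. With $P_{ij}=a_ia_j/\sqrt{D_{ii}D_{jj}}$ for $i\ne j$, $P_{ii}=0$, $c_{ij}=\frac32(D_{ii}+D_{jj})-2a_ia_j$ and $y_i=w_iv_i$, it equals $2y^{\mathsf T}\bigl[(3I_r+2P)^{-1}-\bigl(\sqrt{D_{ii}D_{jj}}\bigr)_{i,j}/(5\tr D)\bigr]y$ plus $\sum_{i<j}(\sqrt{D_{ii}}w_iv_j+\sqrt{D_{jj}}w_jv_i)^2/c_{ij}$. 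It then remains to dominate this form by $|w|^2=\sum_{i,j}w_i^2v_j^2$, using three ingredients:
the bound $(3I_r+2P)^{-1}\preceq\frac13(I_r-2P)$ on the complement of the eigenvector $(\sqrt{D_{ii}})_i$;
the identity $w_iv_iw_jv_j=w_iv_jw_jv_i$, which moves the diagonal products into $2\times2$ blocks in $(w_iv_j,w_jv_i)$;
and a determinant check on each block.
A primal version of your plan would have to reproduce this joint bookkeeping rather than bounding squares and cross terms separately.
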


\begin{proof}
It suffices to prove the following inequality for every unit vector $v\in\R^r$ and every $w\in\R^r$
\[
 2(w^{\mathsf T}\sqrt D\,Tv)^2\le Q_\Lambda(T)|w|^2.
\]

Separating the diagonal and off diagonal terms in $Q_\Lambda(T)$, we obtain
\[
\begin{aligned}
Q_\Lambda(T)=&3\left(\sum_i D_{ii}T_{ii}^2
         +\sum_{i<j}(D_{ii}+D_{jj})T_{ij}^2\right)-2\left(\sum_i a_i^2T_{ii}^2
              +2\sum_{i<j}a_i a_jT_{ij}^2\right)\\
&+2\left(\sum_i a_i^2T_{ii}^2
              +2\sum_{i<j}a_i a_jT_{ii}T_{jj}\right)\\
=&3\sum_iD_{ii}T_{ii}^2
  +4\sum_{i<j}a_i a_jT_{ii}T_{jj}
  +2\sum_{i<j}c_{ij}T_{ij}^2,
\end{aligned}
\]
where $c_{ij}=\frac{3}{2}(D_{ii}+D_{jj})-2a_i a_j$ for $i\ne j$. Since $a_i+a_j\le1$, we have $c_{ij}=a_i a_j+\frac{3}{2}(a_i+a_j)(1-a_i-a_j)>0$.

In the normalized coordinates $(\sqrt{D_{ii}}T_{ii})_{i=1}^r$, the diagonal part of $Q_\Lambda$ has coefficient matrix $3I_r+2P$, where $P_{ii}=0$ and $P_{ij}=\frac{a_i a_j}{\sqrt{D_{ii}D_{jj}}}$ for $i\ne j$. Using $D_{ii}=a_i(1-a_i)$ and $\sum_{j\ne i}a_j=1-a_i$, we obtain for every $y\in\R^r$
\[
 \begin{aligned}
 y^{\mathsf T}(I_r+P)y
 &=\sum_i y_i^2+2\sum_{i<j}P_{ij}y_i y_j\\
 &=\sum_i\frac{a_i\sum_{j\ne i}a_j}{D_{ii}}y_i^2
   +2\sum_{i<j}\frac{a_i a_j}{\sqrt{D_{ii}D_{jj}}}y_i y_j\\
 &=\sum_{i<j}a_i a_j
   \left(\frac{y_i}{\sqrt{D_{ii}}}
             +\frac{y_j}{\sqrt{D_{jj}}}\right)^2\ge0.
 \end{aligned}
\]
Hence all eigenvalues of $P$ are at least $-1$, so that $3I_r+2P\succeq I_r$. The decomposition of $Q_\Lambda$ then gives
\[
 Q_\Lambda(T)\ge
 \sum_i D_{ii}T_{ii}^2
 +2\sum_{i<j}c_{ij}T_{ij}^2.
\]
Since $D_{ii}>0$ and $c_{ij}>0$, the right-hand side is strictly positive for every nonzero symmetric matrix $T$. Thus $Q_\Lambda$ is positive definite.

By direct computation, we have
\[
 \sqrt2\,w^{\mathsf T}\sqrt D\,Tv
 =\sqrt2\sum_i w_i v_i\bigl(\sqrt{D_{ii}}T_{ii}\bigr)
   +\sqrt2\sum_{i<j}
       (\sqrt{D_{ii}}w_i v_j+\sqrt{D_{jj}}w_j v_i)T_{ij}.
\]
The weighted trace gives $\tr(DT)=\sum_iD_{ii}T_{ii}
 =\left\langle(\sqrt{D_{ii}}T_{ii})_{i=1}^r,(\sqrt{D_{ii}})_{i=1}^r\right\rangle$. From $\tr(DT)=0$, it follows that the vector $(\sqrt{D_{ii}}T_{ii})_{i=1}^r$ lies in the subspace $\left\{y\in\R^r:\sum_i\sqrt{D_{ii}}\,y_i=0\right\}$,  which is the orthogonal complement of $(\sqrt{D_{ii}})_{i=1}^r$ in $\R^r$. This trace constraint involves only the diagonal entries of $T$, leaving its off diagonal entries unrestricted. Using the above constraint, we obtain
\[
 \sum_i\sqrt{D_{ii}}\,w_i v_i T_{ii}
 =\sum_i\left(w_i v_i-\frac{\sqrt{D_{ii}}}{\tr D}
              \sum_k\sqrt{D_{kk}}\,w_k v_k\right)\sqrt{D_{ii}}T_{ii}.
\]
Moreover, for each $i$
\[
 \sum_{j=1}^r P_{ij}\sqrt{D_{jj}}
 =\frac{a_i}{\sqrt{D_{ii}}}\sum_{j\ne i}a_j
 =\frac{a_i(1-a_i)}{\sqrt{D_{ii}}}
 =\sqrt{D_{ii}}.
\]
Thus $(\sqrt{D_{ii}})_{i=1}^r$ is an eigenvector of $P$ with eigenvalue $1$, so that
\[
 \sum_j\left((3I_r+2P)^{-1}\right)_{ij}\sqrt{D_{jj}}
 =\frac{\sqrt{D_{ii}}}{5}.
\]
For $(T_{11},\ldots,T_{rr})\ne0$, the above identities and the Cauchy--Schwarz inequality for $3I_r+2P$ give
\[
\begin{aligned}
 &\frac{2\left(\sum_i\sqrt{D_{ii}}\,w_i v_i T_{ii}\right)^2}
       {3\sum_iD_{ii}T_{ii}^2+4\sum_{i<j}a_i a_jT_{ii}T_{jj}}\\
 \le&2\sum_{i,j}
 \left(w_i v_i-\frac{\sqrt{D_{ii}}}{\tr D}
                    \sum_k\sqrt{D_{kk}}\,w_k v_k\right)
 \left((3I_r+2P)^{-1}\right)_{ij}\times
 \left(w_j v_j-\frac{\sqrt{D_{jj}}}{\tr D}
                    \sum_k\sqrt{D_{kk}}\,w_k v_k\right)\\
 =&2\sum_{i,j}w_i v_i
 \left[\left((3I_r+2P)^{-1}\right)_{ij}
       -\frac{\sqrt{D_{ii}D_{jj}}}{5\tr D}\right]w_j v_j,
\end{aligned}
\]
with equality for $T_{ii}=\frac{1}{\sqrt{D_{ii}}}\sum_j\left[\left((3I_r+2P)^{-1}\right)_{ij}-\frac{\sqrt{D_{ii}D_{jj}}}{5\tr D}\right]w_j v_j$, provided these entries are not all zero. Hence
\begin{align*}
 &\sup_{\substack{(T_{11},\ldots,T_{rr})\in\R^r\setminus\{0\}\\
                   \sum_iD_{ii}T_{ii}=0}}
   \frac{2\left(\sum_i\sqrt{D_{ii}}\,w_i v_i T_{ii}\right)^2}
        {3\sum_iD_{ii}T_{ii}^2+4\sum_{i<j}a_i a_jT_{ii}T_{jj}}\\
 =&2\sum_{i,j}w_i v_i
   \left[\left((3I_r+2P)^{-1}\right)_{ij}
   -\frac{\sqrt{D_{ii}D_{jj}}}{5\tr D}\right]w_j v_j.
\end{align*}
For each $i<j$, the linear coefficient of $T_{ij}$ is $\sqrt2(\sqrt{D_{ii}}w_i v_j+\sqrt{D_{jj}}w_j v_i)$, and its quadratic coefficient in $Q_\Lambda$ is $2c_{ij}$. Hence
\[
 \frac{\left[\sqrt2\left(\sqrt{D_{ii}}w_i v_j+\sqrt{D_{jj}}w_j v_i\right)\right]^2}{2c_{ij}}
 =\frac{\left(\sqrt{D_{ii}}w_i v_j+\sqrt{D_{jj}}w_j v_i\right)^2}{c_{ij}}.
\]
Since there are no cross terms between the diagonal and off diagonal variables, we obtain
\[
\begin{aligned}
 \sup_{\substack{T=T^{\mathsf T}\ne0\\\tr(DT)=0}}
        \frac{2(w^{\mathsf T}\sqrt D\,Tv)^2}{Q_{\Lambda}(T)}=&2\sum_{i,j}w_i v_i
       \left[\left((3I_r+2P)^{-1}\right)_{ij}
       -\frac{\sqrt{D_{ii}D_{jj}}}{5\tr D}\right]w_j v_j\\
 &+\sum_{i<j}
       \frac{\left(\sqrt{D_{ii}}w_i v_j+\sqrt{D_{jj}}w_j v_i\right)^2}{c_{ij}}.
\end{aligned}
\]

Using $\frac{a_i^2}{D_{ii}}=\frac{a_i}{1-a_i}>0$, we obtain
\[
 \begin{aligned}
 P+\diag\left(\frac{a_1}{1-a_1},\ldots,\frac{a_r}{1-a_r}\right)=\left(\frac{a_i a_j}{\sqrt{D_{ii}D_{jj}}}\right)_{i,j=1}^r.
 \end{aligned}
\]
Thus the matrix on the right has rank one, and its kernel has dimension $r-1$. For every nonzero vector $y$ in this kernel, the preceding identity gives
\[
 y^{\mathsf T}Py
 =-\sum_{i=1}^r\frac{a_i}{1-a_i}y_i^2<0.
\]
Since $P$ is negative definite on this $(r-1)$-dimensional kernel, it has at most one positive eigenvalue. Since $(\sqrt{D_{ii}})_{i=1}^r$ is an eigenvector with eigenvalue $1$, symmetry and $P\succeq-I_r$ imply that all eigenvalues on its orthogonal complement lie in $[-1,0]$.

By convexity, for $-1\le\lambda\le0$,
\[
 \frac{1}{3+2\lambda}
 \le -\lambda+\frac{1+\lambda}{3}
 =\frac{1-2\lambda}{3}.
\]
Hence $(3I_r+2P)^{-1}\preceq\frac{1}{3}(I_r-2P)$ on this orthogonal complement. On the line spanned by $(\sqrt{D_{ii}})_{i=1}^r$, the two operators have eigenvalues $\frac{1}{5}$ and $-\frac{1}{3}$, respectively. The orthogonal projection onto this line is $\bigl(\sqrt{D_{ii}D_{jj}}\bigr)_{i,j=1}^r/\tr D$. Subtracting $\frac{1}{5}$ of this projection from the left-hand side and adding $\frac{1}{3}$ of it to the right-hand side extends both restricted operators by zero along the line. Therefore,
\begin{equation}\label{eq:restricted-spectral-bound}
 (3I_r+2P)^{-1}
 -\frac{\bigl(\sqrt{D_{ii}D_{jj}}\bigr)_{i,j=1}^r}{5\tr D}
 \preceq\frac{1}{3}\left[
 I_r-2P+\frac{\bigl(\sqrt{D_{ii}D_{jj}}\bigr)_{i,j=1}^r}{\tr D}
 \right].
\end{equation}

Applying \eqref{eq:restricted-spectral-bound} to $(\sqrt2\,w_i v_i)_{i=1}^r$ in the preceding expression for the supremum gives
\begin{equation}\label{eq:matrix-dual-upper}
 \begin{aligned}
 \sup_{\substack{T=T^{\mathsf T}\ne0\\\tr(DT)=0}}
        \frac{2(w^{\mathsf T}\sqrt D\,Tv)^2}{Q_{\Lambda}(T)}\le&\frac{2}{3}\left\{
       \sum_i(w_i v_i)^2-2\sum_{i,j}P_{ij}w_i v_i w_j v_j
       +\frac{1}{\tr D}\left(\sum_i\sqrt{D_{ii}}\,w_i v_i\right)^2\right\}\\
 &+\sum_{i<j}
       \frac{(\sqrt{D_{ii}}w_i v_j+\sqrt{D_{jj}}w_j v_i)^2}{c_{ij}}.
 \end{aligned}
\end{equation}

It remains to bound the right-hand side of \eqref{eq:matrix-dual-upper} by $|w|^2$. Since
\begin{equation}\label{eq:diagonal-weight-bound}
 \tr D-2D_{ii}
 =2\sum_{\substack{j<k\\j,k\ne i}}a_j a_k\ge0,
\end{equation}
the coefficient $\frac{2}{3}(1+\frac{D_{ii}}{\tr D})$ of $(w_i v_i)^2$ on the right-hand side of \eqref{eq:matrix-dual-upper} is at most $1$. The remaining terms group into pairs because $(w_i v_i)(w_j v_j)=(w_i v_j)(w_j v_i)$. For each $i<j$, set
\[
 b_{ij}=1+\frac{2c_{ij}}{3\tr D}
             -\frac{4c_{ij}a_i a_j}{3D_{ii}D_{jj}}.
\]
Expanding the right-hand side of \eqref{eq:matrix-dual-upper} and using the bound on the diagonal coefficients, we obtain
\[
\begin{aligned}
 \sup_{\substack{T=T^{\mathsf T}\ne0\\\tr(DT)=0}}
 \frac{2(w^{\mathsf T}\sqrt D\,Tv)^2}{Q_\Lambda(T)}
 \le
 &\sum_{i<j}\left[
 \frac{D_{ii}}{c_{ij}}(w_i v_j)^2
 +\frac{D_{jj}}{c_{ij}}(w_j v_i)^2
 +\frac{2\sqrt{D_{ii}D_{jj}}}{c_{ij}}b_{ij}w_i v_j w_j v_i\right]\\
 &+\sum_i(w_i v_i)^2.
\end{aligned}
\]
To bound each summand over $i<j$ by $(w_i v_j)^2+(w_j v_i)^2$, it suffices to show that
\[
 \begin{pmatrix}
 c_{ij}-D_{ii} & -\sqrt{D_{ii}D_{jj}}\,b_{ij}\\
 -\sqrt{D_{ii}D_{jj}}\,b_{ij} & c_{ij}-D_{jj}
 \end{pmatrix}\succeq0.
\]
Since $a_i+a_j\le1$,
\begin{equation}\label{eq:pair-coefficient-positive}
 c_{ij}-D_{ii}=\frac{1}{2}(a_i+3a_j)(1-a_i-a_j)\ge0,
\end{equation}
and symmetry in $i,j$ gives $c_{ij}\ge\max\{D_{ii},D_{jj}\}>0$. The diagonal entries are therefore nonnegative, so positive semidefiniteness is equivalent to a nonnegative determinant, or
\begin{equation}\label{eq:pair-spectral-bound}
 |b_{ij}|\le
 \sqrt{\frac{(c_{ij}-D_{ii})(c_{ij}-D_{jj})}{D_{ii}D_{jj}}}.
\end{equation}
For fixed $i<j$, direct calculation gives
\[
 (c_{ij}-D_{ii})(c_{ij}-D_{jj})
 =\frac{(1-a_i-a_j)^2}{4}
   \bigl[3(a_i+a_j)^2+4a_i a_j\bigr].
\]
Consequently,
\[
 \sqrt{\frac{(c_{ij}-D_{ii})(c_{ij}-D_{jj})}{D_{ii}D_{jj}}}
 =\frac{(1-a_i-a_j)\sqrt{3(a_i+a_j)^2+4a_i a_j}}
        {2\sqrt{a_i a_j(1-a_i)(1-a_j)}}.
\]
We first establish the bounds
\[
 -\frac{(a_i+a_j)(1-a_i-a_j)}{2(1-a_i)(1-a_j)}
 \le b_{ij}\le
 \frac{3(1-a_i-a_j)^2}{2(1-a_i)(1-a_j)}.
\]
From the definition of $D$, we have
\begin{equation}\label{eq:trace-D-expansion}
\begin{aligned}
 \tr(D)=1-\sum_k a_k^2
 =1-(a_i+a_j)^2+2a_i a_j-\sum_{k\ne i,j}a_k^2.
\end{aligned}
\end{equation}
Substituting $0\le\sum_{k\ne i,j}a_k^2\le(1-a_i-a_j)^2$ into \eqref{eq:trace-D-expansion} gives
\[
 \frac{2}{3}(2c_{ij}+a_i a_j)\le\tr D
 \le D_{ii}+D_{jj}+1-a_i-a_j.
\]
Using $D_{ii}D_{jj}=a_i a_j(1-a_i)(1-a_j)$, the lower bound for $\tr D$ gives
\[
 \begin{aligned}
 b_{ij}
 &\le1-\frac{4c_{ij}}{3(1-a_i)(1-a_j)}
       +\frac{c_{ij}}{2c_{ij}+a_i a_j}\\
 &=\frac{3(1-a_i-a_j)^2}{2(1-a_i)(1-a_j)}
   -\frac{(1-a_i-a_j)\bigl[2(a_i+a_j)c_{ij}+a_i a_j\bigr]}
          {2(2c_{ij}+a_i a_j)(1-a_i)(1-a_j)}\\
 &\le\frac{3(1-a_i-a_j)^2}{2(1-a_i)(1-a_j)},
 \end{aligned}
\]
The upper bound for $\tr D$ similarly gives
\[
 \begin{aligned}
 b_{ij}
 &\ge1-\frac{4c_{ij}}{3(1-a_i)(1-a_j)}
       +\frac{2c_{ij}}{3(D_{ii}+D_{jj}+1-a_i-a_j)}\\
 &=-\frac{(a_i+a_j)(1-a_i-a_j)}{2(1-a_i)(1-a_j)}
   +\frac{(1-a_i-a_j)^2}{(1-a_i)(1-a_j)}
      \left(1+\frac{c_{ij}}{3(D_{ii}+D_{jj}+1-a_i-a_j)}\right)\\
 &\ge-\frac{(a_i+a_j)(1-a_i-a_j)}{2(1-a_i)(1-a_j)}.
 \end{aligned}
\]
For the lower endpoint, the inequalities $a_i a_j\le(1-a_i)(1-a_j)$ and $(a_i+a_j)^2\le3(a_i+a_j)^2+4a_i a_j$ bound its absolute value by the right-hand side of \eqref{eq:pair-spectral-bound}. For the upper endpoint, the same bound follows from
\[
 (1-a_i-a_j)^2\le(1-a_i)(1-a_j),\qquad
 9a_i a_j\le3(a_i+a_j)^2+4a_i a_j.
\]
This proves \eqref{eq:pair-spectral-bound}, including the case $a_i+a_j=1$. Hence
\[
\begin{aligned}
 \sup_{\substack{T=T^{\mathsf T}\ne0\\\tr(DT)=0}}
 \frac{2(w^{\mathsf T}\sqrt D\,Tv)^2}{Q_\Lambda(T)}\le\sum_i(w_i v_i)^2
   +\sum_{i<j}\bigl[(w_i v_j)^2+(w_j v_i)^2\bigr]=\sum_{i,j}w_i^2v_j^2=|w|^2.
\end{aligned}
\]
This proves \eqref{eq:matrix}.
\end{proof}

For the following two lemmas, let $x:M^n\to\Sph^{n+q}$ be a minimal immersion with flat normal bundle. At a point where $E>0$, use the nonzero shape operators to define
\begin{equation}\label{eq:geometric-matrices}
 \Lambda=\frac{1}{S}\diag\bigl(|A^1|^2,\ldots,|A^r|^2\bigr),
 \qquad D=\Lambda(I_r-\Lambda).
\end{equation}

By Lemma~\ref{lem:matrix} and orthogonal projection, we obtain the following estimates.

\begin{lemma}\label{lem:directional-estimate}
Let $(B_{i\alpha})$ be a real $n\times r$ array satisfying $\sum_i B_{i\alpha}=0$ for each $\alpha=1,\ldots,r$. Suppose that the contraction matrix $T=(T_{\alpha\beta})$, defined by
\[
 T_{\alpha\beta}=\frac{\sum_i A^\alpha_{ii}B_{i\beta}}
 {|A^\alpha|\,|A^\beta|},
\]
is symmetric. Then
\begin{equation}\label{eq:component-projection-energy}
  \sum_{\alpha=1}^r\sum_i\bigl(S-|A^\alpha|^2\bigr)B_{i\alpha}^2
       -S^2\tr(T^2D)\ge0.
\end{equation}
If $\tr(DT)=0$, then for each $k=1,\ldots,n$,
\begin{equation}\label{eq:component-row-bound}
 \begin{aligned}
 \sum_{\alpha=1}^r\bigl(S-|A^\alpha|^2\bigr)B_{k\alpha}^2\le\frac{n-1}{n}\left\{
       \frac{S^2}{2}Q_{\Lambda}(T)
       +\sum_{\alpha=1}^r\sum_i\bigl(S-|A^\alpha|^2\bigr)B_{i\alpha}^2
       -S^2\tr(T^2D)\right\}.
 \end{aligned}
\end{equation}

\end{lemma}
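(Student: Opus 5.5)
The plan is to rewrite both inequalities in terms of orthogonal projections in $\R^n$ and then use Lemma~\ref{lem:matrix} for the part of each column of $B$ that lies in the span of the shape operators.

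\textbf{Setup.} Work in the tangent frame that diagonalizes all shape operators simultaneously. For $1\le\alpha\le r$ set
\[
u^\alpha=\bigl(A^\alpha_{ii}\bigr)_{i=1}^n/|A^\alpha|\in\R^n .
\]
By \eqref{eq:normal-orthogonality} the $u^\alpha$ are orthonormal. By minimality each $u^\alpha$ is orthogonal to $\mathbf 1=(1,\ldots,1)$. Let $b^\beta=(B_{i\beta})_i$, so that $b^\beta\perp\mathbf 1$ by hypothesis. The definition of $T$ says
\[
\langle u^\alpha,b^\beta\rangle=|A^\beta|\,T_{\alpha\beta}.
\]
Hence $b^\beta$ decomposes orthogonally as
\[
b^\beta=|A^\beta|\sum_\alpha T_{\alpha\beta}u^\alpha+c^\beta,
\qquad c^\beta\perp\Span\{u^1,\ldots,u^r,\mathbf 1\}.
\]

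\textbf{The first inequality \eqref{eq:component-projection-energy}.} Use the symmetry of $T$ and $D_{\beta\beta}=\frac{|A^\beta|^2}{S}\bigl(1-\frac{|A^\beta|^2}{S}\bigr)$. This gives
\[
S^2\tr(T^2D)=\sum_\beta(S-|A^\beta|^2)\,|A^\beta|^2\sum_\alpha T_{\alpha\beta}^2 .
\]
By Pythagoras, $|b^\beta|^2=|A^\beta|^2\sum_\alpha T_{\alpha\beta}^2+|c^\beta|^2$. Therefore the left-hand side of \eqref{eq:component-projection-energy} equals
\[
R:=\sum_\beta(S-|A^\beta|^2)|c^\beta|^2\ge0 .
\]
This is just Bessel's inequality.

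\textbf{The row bound \eqref{eq:component-row-bound}.} Fix $k$ and set $v=(u^\alpha_k)_{\alpha=1}^r$. Define $x_\beta=\sqrt{S-|A^\beta|^2}\,B_{k\beta}$, so the left-hand side of \eqref{eq:component-row-bound} is $|x|^2$. Since $\sqrt{S-|A^\beta|^2}\,|A^\beta|=S\sqrt{D_{\beta\beta}}$ and $T$ is symmetric,
\[
x=S\sqrt D\,Tv+y,\qquad y_\beta=\sqrt{S-|A^\beta|^2}\;c^\beta_k .
\]
Two projection estimates follow from the orthogonal splitting $\R^n=\Span\{\mathbf 1\}\oplus\Span\{u^\alpha\}\oplus W$. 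Decomposing $e_k$ gives $|v|^2=t$ and $|\pi_W e_k|^2=s$ with
\[
t+s=1-\tfrac1n=\tfrac{n-1}{n}.
\]
Since $c^\beta\in W$, Cauchy--Schwarz gives $(c^\beta_k)^2\le s|c^\beta|^2$, so $|y|^2\le sR$. For the other piece, the hypothesis $\tr(DT)=0$ lets us apply Lemma~\ref{lem:matrix}:
\[
|\sqrt D\,Tv|^2=v^{\mathsf T}TDTv\le \tfrac12 Q_\Lambda(T)\,t .
\]
Put $Q'=\frac{S^2}{2}Q_\Lambda(T)$. Then
\[
|x|^2\le\bigl(\sqrt{tQ'}+\sqrt{sR}\bigr)^2\le (t+s)(Q'+R)=\tfrac{n-1}{n}(Q'+R),
\]
where the second step is Cauchy--Schwarz. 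Since $R$ is exactly the bracketed residual in \eqref{eq:component-row-bound}, this proves it.

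\textbf{Main obstacle.} The delicate point is making sure that the weights $t$ and $s$ come from complementary orthogonal projections of the same vector $e_k$. This is what keeps the constant at $\frac{n-1}{n}$ rather than a larger one. It relies on $c^\beta$ lying in $W$, i.e.\ orthogonal to both $\mathbf 1$ and the $u^\alpha$, which in turn uses $\sum_iB_{i\beta}=0$.
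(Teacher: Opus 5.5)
Your proof is correct and follows essentially the paper's argument. The first inequality is the same Bessel/Pythagoras computation with the residuals $c^\beta$, and the row bound uses the same three ingredients: Lemma~\ref{lem:matrix} on the projected part, Cauchy--Schwarz on the residual, and $\sum_iB_{i\alpha}=0$ for the factor $\frac{n-1}{n}$. The only difference is bookkeeping. The paper tests against an arbitrary unit vector $v\in\R^r$, applies the mean-zero Cauchy--Schwarz to the vector $\bigl(\sum_\alpha\sqrt{S-|A^\alpha|^2}\,v_\alpha B_{i\alpha}\bigr)_i$, and then takes $v$ parallel to the $k$-th row. You instead split $e_k$ into its $\mathbf 1$-, $\Span\{u^\alpha\}$- and $W$-components and use the triangle inequality, which avoids choosing $v$ and treating a vanishing row separately.
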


\begin{proof}
Fix an arbitrary array $(B_{i\alpha})$ satisfying the hypotheses and a row index $k$. The vectors $\left(\frac{A^\alpha_{ii}}{|A^\alpha|}\right)_{i=1}^n$ are orthonormal by \eqref{eq:normal-orthogonality}. From the definition of $T$, we have
\[
 \sum_i\frac{A^\gamma_{ii}}{|A^\gamma|}
 \left(B_{i\alpha}
       -|A^\alpha|\sum_\beta\frac{A^\beta_{ii}}{|A^\beta|}T_{\beta\alpha}\right)=0.
\]
Orthogonal projection and the definitions of $D$ and $T$ give
\[
 \begin{aligned}
 \sum_{\alpha,i}\bigl(S-|A^\alpha|^2\bigr)
 \left(B_{i\alpha}
       -|A^\alpha|\sum_\beta\frac{A^\beta_{ii}}{|A^\beta|}T_{\beta\alpha}\right)^2=\sum_{\alpha,i}\bigl(S-|A^\alpha|^2\bigr)B_{i\alpha}^2
       -S^2\tr(T^2D)\ge0.
 \end{aligned}
\]
This proves \eqref{eq:component-projection-energy}.

Assume $\tr(DT)=0$ and let $v=(v_\alpha)\in\R^r$ be any unit vector. From $\sum_i B_{i\alpha}=0$ and the Cauchy--Schwarz inequality, we obtain
\[
\begin{aligned}
 \left(\sum_\alpha\sqrt{S-|A^\alpha|^2}\, v_\alpha B_{k\alpha}\right)^2=&\left[\sum_i\left(\delta_{ik}-\frac{1}{n}\right)
       \sum_\alpha\sqrt{S-|A^\alpha|^2}\,
                    v_\alpha B_{i\alpha}\right]^2\\
 \le&\frac{n-1}{n}\sum_i
       \left(\sum_\alpha\sqrt{S-|A^\alpha|^2}\,
                    v_\alpha B_{i\alpha}\right)^2.
\end{aligned}
\]

Since $T\sqrt D$ and its transpose $\sqrt D T$ have the same operator norm, Lemma~\ref{lem:matrix} gives
\[
 |T\sqrt D\,v|^2\le\frac{1}{2}Q_{\Lambda}(T).
\]
Using the above orthogonal decomposition and the Cauchy--Schwarz inequality, we obtain
\begin{align*}
 &\sum_i\left(\sum_\alpha\sqrt{S-|A^\alpha|^2}\,
                    v_\alpha B_{i\alpha}\right)^2\\
                    =&S^2|T\sqrt D\,v|^2+\sum_i\left\{\sum_\alpha\sqrt{S-|A^\alpha|^2}\,v_\alpha
       \left(B_{i\alpha}
       -|A^\alpha|\sum_\beta\frac{A^\beta_{ii}}{|A^\beta|}T_{\beta\alpha}\right)\right\}^2\\
 \le&\frac{S^2}{2}Q_{\Lambda}(T)
       +\sum_{\alpha,i}\bigl(S-|A^\alpha|^2\bigr)B_{i\alpha}^2
       -S^2\tr(T^2D).
\end{align*}
If $B_{k\alpha}=0$ for every $\alpha$, \eqref{eq:component-row-bound} follows from $Q_\Lambda(T)\ge0$ and \eqref{eq:component-projection-energy}. Otherwise, since the above estimates hold for every unit vector $v$, choose
\[
 v_\alpha=\frac{\sqrt{S-|A^\alpha|^2}\,B_{k\alpha}}
 {\sqrt{\sum_\beta(S-|A^\beta|^2)B_{k\beta}^2}}.
\]
Then $|v|=1$, and substituting this vector into the above estimates gives
\[
\begin{aligned}
 \sum_\alpha(S-|A^\alpha|^2)B_{k\alpha}^2
 &=\left(\sum_\alpha\sqrt{S-|A^\alpha|^2}\,v_\alpha B_{k\alpha}\right)^2\\
 &\le\frac{n-1}{n}\left\{
       \frac{S^2}{2}Q_\Lambda(T)
       +\sum_{\alpha,i}(S-|A^\alpha|^2)B_{i\alpha}^2
       -S^2\tr(T^2D)\right\}.
\end{aligned}
\]
This proves \eqref{eq:component-row-bound}.

\end{proof}

\begin{lemma}\label{lem:coercive}
For $n\ge3$, we have
\begin{equation}\label{eq:coercive}
\begin{aligned}
&\sum_{\alpha=1}^q(S-|A^\alpha|^2)|\nabla A^\alpha|^2
+\frac{1}{2}|\nabla S|^2
-2\sum_{\alpha,\beta=1}^q\sum_k
\left(\sum_{i,j}A^\alpha_{ij}A^\beta_{ij;k}\right)^2\\
\ge&\frac{E}{6n(n-1)S}|\nabla A|^2
-\frac{2n+1}{16}\frac{|\nabla E|^2}{E}.
\end{aligned}
\end{equation}
\end{lemma}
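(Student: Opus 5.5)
At a point $p$ with $E(p)>0$ (the inequality is only meaningful there), I would work in a normal frame diagonalizing $\G(p)$ and a tangent frame $\{e_i\}$ diagonalizing all $A^\alpha(p)$ simultaneously. With this choice, $\sum_{i,j}A^\alpha_{ij}A^\beta_{ij;k}=\sum_i A^\alpha_{ii}A^\beta_{ii;k}$. For each fixed $k$ I would set $B^{(k)}_{i\beta}=A^\beta_{ii;k}$. This array has zero column sums by \eqref{eq:trace-derivative}, and its contraction matrix $T^{(k)}$ as in Lemma~\ref{lem:directional-estimate} is symmetric by Lemma~\ref{lem:symmetry}. Since $(\nabla_k\G)_{\alpha\beta}=2|A^\alpha||A^\beta|T^{(k)}_{\alpha\beta}$, the three algebraic terms become
\[
2\sum_{\alpha,\beta}\Bigl(\sum_iA^\alpha_{ii}A^\beta_{ii;k}\Bigr)^2=2S^2\tr(\Lambda T\Lambda T),\qquad \tfrac12(\nabla_kS)^2=2S^2(\tr\Lambda T)^2,
\]
together with
\[
\nabla_kE=2S\nabla_kS-2\tr(\G\nabla_k\G)=4S^2\tr(DT^{(k)}).
\]
Note also that $\tr D=E/S^2$.

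Next I would split $|\nabla A^\alpha|^2$ using Codazzi \eqref{eq:codazzi}. For $i\ne k$, the component $A^\alpha_{ii;k}$ occurs three times (as $ii;k$, $ik;i$, $ki;i$). The components $A^\alpha_{kk;k}$ occur once, and components with three distinct indices form a remainder $F^\alpha\ge0$. This gives
\[
\sum_\alpha(S-|A^\alpha|^2)|\nabla A^\alpha|^2=\sum_k\Bigl[3\sum_{\alpha,i}(S-|A^\alpha|^2)B_{i\alpha}^2-2\sum_\alpha(S-|A^\alpha|^2)B_{k\alpha}^2\Bigr]+\sum_\alpha(S-|A^\alpha|^2)F^\alpha .
\]
I would write $R_k=\sum_{\alpha,i}(S-|A^\alpha|^2)B_{i\alpha}^2-S^2\tr(T^2D)\ge0$, which is nonnegative by \eqref{eq:component-projection-energy}. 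The $k$-th bracket plus the algebraic terms then equals $S^2Q_\Lambda(T^{(k)})+3R_k-2\sum_\alpha(S-|A^\alpha|^2)B_{k\alpha}^2$. If $\tr(DT^{(k)})=0$, the row bound \eqref{eq:component-row-bound} gives at least
\[
\tfrac1nS^2Q_\Lambda(T^{(k)})+\tfrac{n+2}{n}R_k .
\]

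In general $\tr(DT^{(k)})\neq0$, so I would decompose $T^{(k)}=T_0+tI_r$ (or $t$ times a suitable fixed symmetric direction) with $\tr(DT_0)=0$. Here $t=\tr(DT)/\tr D=\nabla_kE/(4E)$. I would redo the previous step on $T_0$, with $B$ correspondingly shifted along $\bigl(|A^\alpha|^{-1}A^\alpha_{ii}\bigr)$ so that the hypotheses of Lemma~\ref{lem:directional-estimate} still hold. The cross and square terms in $t$ are then absorbed by Cauchy--Schwarz, at a cost of $\frac{2n+1}{16}\frac{(\nabla_kE)^2}{E}$. The exact constant should come from optimizing the weight $\frac1n$ against the quadratic terms in $t$, which are of size $S^2t^2\tr D\sim(\nabla_kE)^2/E$. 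This leaves a positive multiple of $S^2Q_\Lambda(T_0)+R_k$, plus the $F^\alpha$ terms.

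Finally, I would convert this into $\frac{E}{6n(n-1)S}|\nabla A|^2$. For the weights, $S-|A^\alpha|^2=S(1-a_\alpha)\ge\frac{S}{2}\tr D=\frac{E}{2S}$, by \eqref{eq:diagonal-weight-bound}. For the diagonal part of $Q_\Lambda$, I would use the lower bound $Q_\Lambda(T)\ge\sum D_{\alpha\alpha}T_{\alpha\alpha}^2+2\sum c_{\alpha\beta}T_{\alpha\beta}^2$ from the proof of Lemma~\ref{lem:matrix}. Together with the orthogonal decomposition defining $R_k$, this controls $\sum_{i,\alpha}B_{i\alpha}^2$ by a multiple of $\frac{S}{E}\bigl(S^2Q_\Lambda+R_k\bigr)$. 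Multiplying by the Codazzi multiplicity, at most $3$ per component and summed over $k$, should yield the factor $6n(n-1)$.

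I expect the main obstacle to be this last bookkeeping. The delicate points are the constants: matching the $\frac1n$ gain from the row bound against the $\frac{n-1}{n}$ losses, and ensuring the $t$-correction costs exactly $\frac{2n+1}{16}|\nabla E|^2/E$. I would first try the shift direction $T_0=T-tI_r$ and optimize the Young's inequality parameter, and switch to the direction $D$ itself if that gives a better constant.
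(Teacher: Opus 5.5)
Your plan is correct and is essentially the paper's own proof. It uses the same contraction matrix $T^{(k)}$ (the paper's $F$), the same identity $S^2Q_\Lambda(T^{(k)})+3R_k-2\sum_\alpha(S-|A^\alpha|^2)B_{k\alpha}^2$, the same shift $T_0=T^{(k)}-\frac{\nabla_kE}{4E}I_r$ with $B_{i\alpha}\mapsto B_{i\alpha}-\frac{\nabla_kE}{4E}A^\alpha_{ii}$, and the same weight bound $S-|A^\alpha|^2\ge\frac{E}{2S}$. Two remarks on the details.

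\begin{itemize}
\item The weight bound follows from $\tr D=1-\sum_\beta a_\beta^2\le 1-a_\alpha^2\le 2(1-a_\alpha)$, not from the inequality $2D_{\alpha\alpha}\le\tr D$ that you cite.
\item The bookkeeping you left open is done in the paper as follows: Young's inequality with parameter $\frac{1}{2(n-1)}$ applied to $A^\alpha_{kk;k}$; the Bessel bound $\sum_\alpha(S-|A^\alpha|^2)(A^\alpha_{kk})^2\le\frac{n-1}{2n}E$, which controls the resulting $t^2$ cost; and the identity $S^2Q_\Lambda(T^{(k)})=S^2Q_\Lambda(T_0)+\frac{5(\nabla_kE)^2}{16E}$.
\end{itemize}

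The only real deviation is the last step. The paper traces $2T_0DT_0\preceq Q_\Lambda(T_0)I_r$ to get $Q_\Lambda(T_0)\ge\frac{2}{r}\tr(T_0^2D)$ and then uses $r\le n-1$. Your elementwise lower bound, combined with $2c_{\alpha\beta}\ge D_{\alpha\alpha}+D_{\beta\beta}$, gives $Q_\Lambda(T_0)\ge\tr(T_0^2D)$ directly. This works equally well, and with the paper's Young parameter it even yields $\frac{E}{12nS}|\nabla A|^2$, which dominates $\frac{E}{6n(n-1)S}|\nabla A|^2$ for $n\ge3$.
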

\begin{proof}
For the nonzero shape operators, Codazzi symmetry gives
\begin{equation}\label{eq:codazzi-square-decomposition}
\sum_{i,j,l}(A^\alpha_{ij;l})^2
=3\sum_{i,k}(A^\alpha_{ii;k})^2-2\sum_k(A^\alpha_{kk;k})^2
+\sum_{\substack{i,j,l\\i\ne j,\ j\ne l,\ l\ne i}}(A^\alpha_{ij;l})^2.
\end{equation}

Fix a tangent index $k$ and define the $r\times r$ matrix $F=(F_{\alpha\beta})$ by
\[
 F_{\alpha\beta}=\frac{\sum_i A^\alpha_{ii}A^\beta_{ii;k}}
 {|A^\alpha|\,|A^\beta|}.
\]
By Lemma~\ref{lem:symmetry}, $F$ is symmetric. From \eqref{eq:defect} and the definition of $F$, we have
\begin{equation}\label{eq:geometric-traces}
 \tr D=\frac{E}{S^2},\qquad
 \tr(\Lambda F)=\frac{\nabla_kS}{2S},\qquad
 \tr(DF)=\frac{\nabla_kE}{4S^2}.
\end{equation}
Using the above identities and the definition of $Q_\Lambda$, we obtain
\[
\begin{aligned}
 &3\sum_{\alpha,i}(S-|A^\alpha|^2)(A^\alpha_{ii;k})^2
 -2\sum_\alpha(S-|A^\alpha|^2)(A^\alpha_{kk;k})^2+\frac{1}{2}(\nabla_kS)^2
 -2\sum_{\alpha,\beta}\left(\sum_i A^\alpha_{ii}A^\beta_{ii;k}\right)^2\\
 =&S^2\bigl[Q_\Lambda(F)-3\tr(F^2D)\bigr]
 +3\sum_{\alpha,i}(S-|A^\alpha|^2)(A^\alpha_{ii;k})^2
 -2\sum_\alpha(S-|A^\alpha|^2)(A^\alpha_{kk;k})^2.
\end{aligned}
\]

By \eqref{eq:geometric-traces}, the matrix $T=F-\frac{\tr(DF)}{\tr D}I_r=F-\frac{\nabla_kE}{4E}I_r$ satisfies $\tr(DT)=0$. Applying Young's inequality, we obtain
\[
\begin{aligned}
 \sum_\alpha(S-|A^\alpha|^2)(A^\alpha_{kk;k})^2\le&\left(1+\frac{1}{2(n-1)}\right)
 \sum_\alpha(S-|A^\alpha|^2)
 \left(A^\alpha_{kk;k}-\frac{\nabla_kE}{4E}A^\alpha_{kk}\right)^2\\
 &+\frac{(2n-1)(\nabla_kE)^2}{16E^2}
 \sum_\alpha(S-|A^\alpha|^2)(A^\alpha_{kk})^2.
\end{aligned}
\]

From $\tr A^\alpha=0$ and \eqref{eq:trace-derivative}, we have
\[
 \sum_i\left(A^\alpha_{ii;k}
 -\frac{\nabla_kE}{4E}A^\alpha_{ii}\right)=0.
\]
From \eqref{eq:normal-orthogonality}, we also have
\[
\begin{aligned}
 \frac{\sum_i A^\alpha_{ii}\left(A^\beta_{ii;k}
       -\frac{\nabla_kE}{4E}A^\beta_{ii}\right)}
      {|A^\alpha|\,|A^\beta|}
 =F_{\alpha\beta}-\frac{\nabla_kE}{4E}
       \frac{\sum_i A^\alpha_{ii}A^\beta_{ii}}
            {|A^\alpha|\,|A^\beta|}=F_{\alpha\beta}-\frac{\nabla_kE}{4E}\delta_{\alpha\beta}
 =T_{\alpha\beta}.
\end{aligned}
\]
By the above identities, Lemma~\ref{lem:directional-estimate} applies, since $T$ is symmetric and $\tr(DT)=0$. The subtracted vectors belong to the projection subspace, so the remainder in \eqref{eq:component-projection-energy} is unchanged, and we obtain
\begin{equation}\label{eq:row-input}
\begin{aligned}
&\sum_\alpha(S-|A^\alpha|^2)
\left(A^\alpha_{kk;k}-\frac{\nabla_kE}{4E}A^\alpha_{kk}\right)^2\\
\le&\frac{n-1}{n}\left\{
\frac{S^2}{2}Q_\Lambda(T)
+\sum_{\alpha,i}(S-|A^\alpha|^2)(A^\alpha_{ii;k})^2
-S^2\tr(F^2D)\right\}.
\end{aligned}
\end{equation}

Applying Bessel's inequality to $(\delta_{ik}-\frac{1}{n})_{i=1}^n$ and the orthonormal family used in the proof of Lemma~\ref{lem:directional-estimate}, we obtain
\[
\begin{aligned}
 \sum_\alpha\frac{(A^\alpha_{kk})^2}{|A^\alpha|^2}=\sum_\alpha\left[\sum_i\left(\delta_{ik}-\frac{1}{n}\right)
       \frac{A^\alpha_{ii}}{|A^\alpha|}\right]^2\le\sum_i\left(\delta_{ik}-\frac{1}{n}\right)^2
 =\frac{n-1}{n}.
\end{aligned}
\]
From \eqref{eq:defect} and \eqref{eq:normal-orthogonality}, we obtain
\[
 |A^\alpha|^2(S-|A^\alpha|^2)
 =\sum_{\beta\ne\alpha}|A^\alpha|^2|A^\beta|^2
 \le\frac{E}{2}.
\]
Combining these estimates, we obtain
\begin{equation}\label{eq:shape-row-bound}
\begin{aligned}
\sum_\alpha(S-|A^\alpha|^2)(A^\alpha_{kk})^2
&\le\frac{E}{2}\sum_\alpha\frac{(A^\alpha_{kk})^2}{|A^\alpha|^2}
\le\frac{n-1}{2n}E.
\end{aligned}
\end{equation}

Combining the above estimates with \eqref{eq:component-projection-energy} and using $S^2Q_\Lambda(F)=S^2Q_\Lambda(T)+\frac{5(\nabla_kE)^2}{16E}$, we obtain
\begin{equation}\label{eq:row}
\begin{aligned}
&S^2\bigl[Q_\Lambda(F)-3\tr(F^2D)\bigr]
+3\sum_{\alpha,i}(S-|A^\alpha|^2)(A^\alpha_{ii;k})^2-2\sum_\alpha(S-|A^\alpha|^2)(A^\alpha_{kk;k})^2\\
\ge&\frac{S^2}{2n}Q_\Lambda(T)
+\sum_{\alpha,i}(S-|A^\alpha|^2)(A^\alpha_{ii;k})^2
-S^2\tr(F^2D)-\frac{n(\nabla_kE)^2}{8E}.
\end{aligned}
\end{equation}

Applying Lemma~\ref{lem:matrix} to $T$, we obtain
\begin{equation}\label{eq:matrix-input}
2TDT\preceq Q_\Lambda(T)I_r.
\end{equation}
From the definition of $T$ and \eqref{eq:geometric-traces}, we obtain
\begin{equation}\label{eq:shift-energy}
S^2\tr(F^2D)-S^2\tr(T^2D)
=S^2\left(\frac{\nabla_kE}{4E}\right)^2\tr D
=\frac{(\nabla_kE)^2}{16E}.
\end{equation}
Taking the trace in \eqref{eq:matrix-input} and using \eqref{eq:shift-energy}, we obtain
\[
 \frac{S^2}{2n}Q_\Lambda(T)
 \ge\frac{S^2}{nr}\tr(T^2D)
 =\frac{S^2}{nr}\tr(F^2D)-\frac{(\nabla_kE)^2}{16nrE}.
\]
Together with \eqref{eq:component-projection-energy} and $r\le n-1$, this gives
\[
\begin{aligned}
 &\frac{S^2}{2n}Q_\Lambda(T)
 +\sum_{\alpha,i}(S-|A^\alpha|^2)(A^\alpha_{ii;k})^2
 -S^2\tr(F^2D)-\frac{n(\nabla_kE)^2}{8E}\\
 \ge&\frac{1}{nr}
 \sum_{\alpha,i}(S-|A^\alpha|^2)(A^\alpha_{ii;k})^2
 -\left(2n+\frac{1}{nr}\right)\frac{(\nabla_kE)^2}{16E}\\
 \ge&\frac{1}{n(n-1)}
 \sum_{\alpha,i}(S-|A^\alpha|^2)(A^\alpha_{ii;k})^2
 -\frac{(2n+1)(\nabla_kE)^2}{16E}.
\end{aligned}
\]

For the $r$ nonzero shape operators, \eqref{eq:codazzi-square-decomposition} bounds the weighted derivative squares with at least two equal tangent indices by three times the weighted diagonal derivative squares. The terms with three distinct indices retain their full weights. By Lemma~\ref{lem:symmetry}, each zero shape operator has vanishing contraction terms and contributes $S|\nabla A^\alpha|^2$. Summing \eqref{eq:row} over $k$, using the above bound and including these terms, we obtain
\[
\begin{aligned}
&\sum_\alpha(S-|A^\alpha|^2)|\nabla A^\alpha|^2
+\frac{1}{2}|\nabla S|^2
-2\sum_{\alpha,\beta,k}
\left(\sum_{i,j}A^\alpha_{ij}A^\beta_{ij;k}\right)^2\\
\ge&\frac{1}{3n(n-1)}
\sum_\alpha(S-|A^\alpha|^2)|\nabla A^\alpha|^2
-\frac{2n+1}{16}\frac{|\nabla E|^2}{E}.
\end{aligned}
\]
Finally, by \eqref{eq:defect}, for every $\alpha$,
\[
E\le S^2-|A^\alpha|^4
\le2S(S-|A^\alpha|^2).
\]
This proves \eqref{eq:coercive}.
\end{proof}

Applying the chain rule to $E^n$ and using \eqref{eq:E-evolution} and \eqref{eq:coercive}, we obtain on $\{E>0\}$
\begin{align}
\Delta(E^n)\ge&
\frac{2E^n}{3(n-1)S}|\nabla A|^2
+\frac{n(2n-5)}{4} E^{n-2}|\nabla E|^2 \notag\\
&+2n(2n-S)E^n
+12nE^{n-1}\sum_{\alpha<\beta<\gamma}|A^\alpha|^2|A^\beta|^2|A^\gamma|^2.
\label{eq:power}
\end{align}
Since $E>0$ implies $S>0$, \eqref{eq:power} gives $\Delta(E^n)\ge0$ on $\{E>0\}$ whenever $S\le2n$. At zeros of $E$, the chain rule gives $\Delta(E^n)=0$ because $n\ge3$. Thus $E^n$ is smooth and subharmonic on all of $M$ under $S\le2n$.

\begin{lemma}\label{lem:compactness}
Let $x_j:(M_j^n,p_j)\to\Sph^{n+q}$ be a sequence of minimal immersions of complete manifolds into a fixed round sphere. Suppose that their second fundamental forms are uniformly bounded. Then a subsequence converges smoothly on compact subsets in the pointed immersion sense to a complete minimal immersion $x_\infty:(M_\infty^n,p_\infty)\to\Sph^{n+q}$. If each $x_j$ has flat normal bundle, then so does $x_\infty$.
\end{lemma}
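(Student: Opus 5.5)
The plan is to follow the standard Cheeger--Gromov type compactness argument for immersions with bounded second fundamental form, organized in three steps: local graph representation with uniform control, passage to the limit on balls, and gluing into a complete limit manifold. The flatness of the normal bundle will be checked pointwise at the end via the Ricci equation.

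\emph{Step 1: local graphs.} Since $|A_j|\le C$ uniformly, there is a radius $\rho>0$, depending only on $C$, $n$ and $q$, with the following property. For every point $y\in M_j$, the component of $x_j^{-1}(B_\rho(x_j(y)))$ containing $y$ (after composing with a chart of $\Sph^{n+q}$, or working in $\R^{n+q+1}$) is a graph over $T_{x_j(y)}x_j(M_j)$ of a map $u$ with $|u|+|Du|$ small and $|D^2u|\le C'$.

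Minimality makes $u$ solve a quasilinear elliptic system (the minimal surface system in $\Sph^{n+q}$) whose coefficients are uniformly elliptic once $|Du|$ is small. Schauder theory, bootstrapped from the $C^{1,1}$ bound, then gives uniform $C^k$ bounds for all $k$ on a slightly smaller ball. I would state this via the usual reference: Langer's compactness, or the Breuning--Cheeger type compactness for immersions. Taking that reference as given is the main obstacle, since it hides the delicate part: making $\rho$ uniform and controlling overlaps of sheets. Only uniformity of constants is really needed here.

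\emph{Step 2: limit on intrinsic balls.} The induced metrics have sectional curvature bounded in terms of $C$ by the Gauss equation \eqref{eq:gauss}. The graph description also yields a uniform lower bound on the injectivity radius of the pulled-back exponential map on balls of radius $\rho/2$, the standard local-cover trick.

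I then cover the intrinsic ball $B_R(p_j)$ by a uniformly bounded number of such graph charts, with transition maps controlled in $C^k$. By Arzel\`a--Ascoli and a diagonal argument over $R\to\infty$ and $k\to\infty$, a subsequence gives a pointed manifold $(M_\infty,p_\infty)$, diffeomorphisms $\phi_j$ from exhausting domains of $M_\infty$ into $M_j$, and $x_j\circ\phi_j\to x_\infty$ in $C^\infty_{loc}$.

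\emph{Step 3: properties of the limit.} The limit $x_\infty$ is an immersion because the differentials stay uniformly injective in graph charts. It is minimal because the mean curvature passes to the $C^2$ limit.

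Completeness holds because the limit metric is a $C^\infty_{loc}$ limit along an exhaustion by balls $B_R(p_\infty)$ whose closures are compact, as each is covered by finitely many charts of uniform size. Hence bounded closed sets are compact, and Hopf--Rinow applies.

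Finally, the normal curvature $R^\perp$ is given algebraically by \eqref{eq:ricci} in terms of the second fundamental form, equivalently by $[A^{\nu},A^{\mu}]$. This quantity converges locally uniformly along the sequence, so $R^\perp_{x_j}\equiv0$ for all $j$ forces $R^\perp_{x_\infty}\equiv0$. Thus $x_\infty$ has flat normal bundle.
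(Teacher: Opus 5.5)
Your proposal is correct in outline, but it takes a different route from the paper. The paper imports both analytic inputs from Chen--Yin:
\begin{itemize}
\item a uniform lower bound for the intrinsic injectivity radius (their Theorem~2.1, which needs only the bound on $|A|$ in the fixed sphere);
\item uniform bounds on all $\nabla^k A$, obtained by regarding each minimal immersion as a static mean curvature flow on $[0,1]$ and applying parabolic local derivative estimates at time $1$.
\end{itemize}
It then applies Smith's Arzel\`a--Ascoli theorem for immersed submanifolds at each order, takes a diagonal subsequence, and notes that minimality and flatness survive smooth convergence.

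You instead get the higher-order control elliptically. You write each sheet as a graph and bootstrap Schauder estimates for the minimal surface system; in $\R^{n+q+1}$ this reads $g^{ij}(Du)\partial_{ij}u=$ lower-order terms, which is all the bootstrap needs. You then build the limit by hand from graph charts, in the Langer--Breuning style. Your route uses minimality more directly, and one device, the uniform single-sheet graph property, supplies both the coordinate control and the non-collapsing. The paper's route is shorter because everything is outsourced to off-the-shelf theorems, at the price of parabolic machinery for a static problem. The last step, flatness via the Ricci equation (which is algebraic in $A$), is the same in both.

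Two points to tidy:
\begin{itemize}
\item Drop the phrase ``local-cover trick''. Pulling back by $\exp$ controls only local lifts and does not prevent collapse of the $M_j$, so it cannot yield a manifold limit. What your Step~2 actually uses, and what the single-sheet graph property really gives, is embedded charts on intrinsic balls of fixed radius. That is a lower bound on the true injectivity radius, exactly the input the paper takes from Chen--Yin.
\item Justify the uniform count of charts covering $B_R(p_j)$ explicitly. The graphs give a volume lower bound for small intrinsic balls. Bishop--Gromov, using the Ricci lower bound from the Gauss equation, gives an upper bound for $\operatorname{Vol}(B_R)$.
\end{itemize}
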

\begin{proof}
Since the ambient sphere is fixed and $|A|$ is uniformly bounded, the intrinsic injectivity radii have a uniform positive lower bound by Chen--Yin \cite[Theorem~2.1]{ChenYin2007}. Viewing the minimal immersions as stationary mean curvature flows on $[0,1]$, we apply their local derivative estimates \cite[Theorem~3.2]{ChenYin2007} at time $1$ on intrinsic balls of a fixed radius to obtain uniform bounds for all covariant derivatives of $A$.

After passing to a subsequence, the images of the base points converge in the sphere. Applying Smith's compactness theorem \cite[Theorem~1.2]{Smith2007} at each order and taking a diagonal subsequence, we obtain a complete pointed smooth limit. On each relatively compact open subset of the limit, the convergence maps are diffeomorphisms onto their images for all sufficiently large indices, and the pulled-back metrics and composed immersion maps converge smoothly \cite[Section~2.2]{Smith2007}. Minimality and flatness of the normal bundle pass to the limit by smooth convergence.
\end{proof}

\begin{lemma}\label{lem:limit}
Let $x:M^n\to\Sph^{n+q}$ be a minimal immersion of a complete connected manifold with flat normal bundle, where $n\ge3$. Assume $0\le S\le2n$. If $\sup_M E>0$, then $\sup_M E=2n^2$. Moreover, a sequence with $E(p_j)\to \sup_M E$ has a complete smooth pointed limit satisfying
\[
S_\infty=2n,\qquad E_\infty=2n^2,\qquad
\nabla A_\infty=0,\qquad
\operatorname{Ric}_\infty\ge0.
\]
The Gram matrix $\G_\infty$ has exactly two nonzero eigenvalues, both equal to $n$. Consequently, every such $M$ satisfies $E\le2n^2$.
\end{lemma}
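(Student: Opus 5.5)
Since $S\le 2n$ is bounded, $|A|$ is uniformly bounded, and $E\le S^2\le 4n^2$ is bounded too, so $\sup_M E<\infty$. Take $p_j$ with $E(p_j)\to\sup_M E>0$ and apply Lemma~\ref{lem:compactness} to $(M,p_j)$. This yields a complete pointed limit $x_\infty:(M_\infty,p_\infty)\to\Sph^{n+q}$ with flat normal bundle and $S_\infty\le 2n$. Because the convergence is smooth, $E_\infty$ attains its maximum $\sup_M E>0$ at the interior point $p_\infty$. On $M_\infty$ the inequality \eqref{eq:power} still holds, so $E_\infty^n$ is subharmonic. By the strong maximum principle $E_\infty^n$ is constant on the connected limit, hence $E_\infty\equiv\sup_M E>0$ everywhere.

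With $E_\infty$ constant and positive, every term on the right of \eqref{eq:power} is nonnegative while the left side vanishes. Therefore each term is zero: $\nabla A_\infty=0$ (the coefficient $\frac{2E^n}{3(n-1)S}$ is positive), $(2n-S_\infty)E_\infty=0$, so $S_\infty\equiv2n$, and $\sigma_3(\G_\infty)=0$, so $\G_\infty$ has rank exactly $2$. Call its two nonzero eigenvalues $\mu_1,\mu_2$. Then $\mu_1+\mu_2=2n$ and $E_\infty=2\mu_1\mu_2$. Since $S_\infty$ is constant and $\nabla A_\infty=0$, the Simons identity \eqref{eq:simons} gives $0=nS_\infty-\tr(\G_\infty^2)$, that is, $\mu_1^2+\mu_2^2=2n^2$. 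Combined with $\mu_1+\mu_2=2n$ this forces $\mu_1=\mu_2=n$ and $E_\infty=2n^2$, hence $\sup_ME=2n^2$. The finer identity $\Delta A^\alpha=nA^\alpha-\sum_\beta\G_{\alpha\beta}A^\beta=0$ gives $\lambda_\alpha=n$ for each nonzero eigen-direction directly, which double-checks this.

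For $\operatorname{Ric}_\infty\ge0$, we use the structure: $A_\infty$ is parallel, the shape operators are simultaneously diagonalizable, and there are two orthogonal nonzero trace-free operators with $|A^1|^2=|A^2|^2=n$. A parallel second fundamental form with flat normal bundle makes the eigendistributions parallel, so by de Rham the limit is locally a product of spheres. Via the Gauss equation, $\operatorname{Ric}(e_i)=(n-1)-\sum_\alpha (A^\alpha_{ii})^2$ in the diagonal frame. The delicate step, and the main obstacle, is bounding $\sum_\alpha (A^\alpha_{ii})^2\le n-1$. Here I would use that $\nabla A=0$ forces the Gauss curvature of each product factor to be consistent. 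Concretely, the common eigenspaces of dimensions $n_1,n_2,n_3$ carry principal values that match the product of three spheres in Example~\ref{example:3-sphere} with $r_i^2=n_i/n$, so $\sum_\alpha (A^\alpha_{ii})^2=|\nu\text{-curvature}|^2=(1-r_i^2)/r_i^2$. Then $\operatorname{Ric}=(n_i-1)/r_i^2\ge0$. Deriving these radii from parallelism, via Codazzi and the Gauss equations on each eigendistribution, is the computation I expect to take the most care.

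The final assertion $E\le 2n^2$ is immediate: if $\sup E=0$ there is nothing to prove, and otherwise $\sup E=2n^2$.
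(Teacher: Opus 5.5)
The first part of your argument matches the paper and is correct: the pointed limit, the strong maximum principle for $E_\infty^n$, reading off $\nabla A_\infty=0$, $S_\infty=2n$ and $\sigma_3(\G_\infty)=0$ from \eqref{eq:power}, and using \eqref{eq:simons} to get $\tr(\G_\infty^2)=2n^2$, hence two eigenvalues equal to $n$ and $\sup_ME=2n^2$.

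The gap is $\operatorname{Ric}_\infty\ge0$. You correctly reduce it, via the Gauss equation, to $\sum_\alpha(A^\alpha_{kk})^2\le n-1$, but you do not prove this. You only assert that there are exactly three common eigenspaces carrying the principal normals of Example~\ref{example:3-sphere}, and you defer the derivation. The number of common eigenspaces and the lengths of their principal normals are exactly what needs proof, and de Rham alone gives neither. So as written this step is unsupported.

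The missing idea is that no structure theory is needed. The bound is pointwise linear algebra and uses only what you already have. Work in a frame diagonalizing both nonzero shape operators. The vectors $u_\alpha=(A^\alpha_{ii}/|A^\alpha|)_{i=1}^n$, $\alpha=1,2$, are orthonormal in $\R^n$ by \eqref{eq:normal-orthogonality}, and orthogonal to $(1,\ldots,1)$ by minimality. Hence $A^\alpha_{kk}/|A^\alpha|=\langle u_\alpha,(\delta_{ik}-\frac1n)_{i=1}^n\rangle$, and Bessel's inequality gives
\[
 \sum_{\alpha=1}^2\frac{(A^\alpha_{kk})^2}{|A^\alpha|^2}
 \le\sum_{i=1}^n\Bigl(\delta_{ik}-\frac1n\Bigr)^2=\frac{n-1}{n}.
\]
Since $|A^1|^2=|A^2|^2=n$, this says $\sum_\alpha(A^\alpha_{kk})^2\le n-1$. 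Therefore $\operatorname{Ric}_\infty(e_k,e_k)=n-1-\sum_\alpha(A^\alpha_{kk})^2\ge0$. This is the paper's \eqref{eq:shape-row-bound} evaluated at $S=2n$, $E=2n^2$, and it does not even use $\nabla A_\infty=0$.

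Your structural route can be completed, but the following argument would have to be written out. Let $\eta_a=\sum_\alpha A^\alpha_{ii}\nu_\alpha$ for $e_i$ in the $a$-th common eigenspace, of dimension $n_a$.
\begin{itemize}
\item Since $\nabla A_\infty=0$ and the normal bundle is flat, the Ricci identity gives $(A^\alpha_{ii}-A^\alpha_{jj})K(e_i,e_j)=0$. Combined with the Gauss equation, this yields $1+\langle\eta_a,\eta_b\rangle=0$ for $a\ne b$.
\item The vectors $(\eta_a,1)\in N_1\oplus\R\cong\R^3$ are then pairwise orthogonal, so there are at most three of them.
\item The relation $\sum_a n_a\eta_a=0$, together with $\dim N_1=2$, forces exactly three.
\item Pairing that relation with $\eta_a$ gives $|\eta_a|^2=(n-n_a)/n_a$. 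Hence $\operatorname{Ric}_\infty=n(n_a-1)/n_a\ge0$ on the $a$-th eigenspace.
\end{itemize}
This recovers the local three-sphere product structure, which is more than the lemma needs, at noticeably greater cost than the one-line Bessel bound.
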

\begin{proof}
Since $E\le S^2\le4n^2$, its supremum is finite. Choose $p_j$ with $E(p_j)\to \sup_M E$, and apply Lemma~\ref{lem:compactness}, taking the connected component of the limit containing the base point. On this complete limit,
\[
E_\infty\le \sup_M E,\qquad E_\infty(p_\infty)=\sup_M E>0.
\]
The function $E_\infty^n$ is subharmonic by \eqref{eq:power} and attains its maximum at $p_\infty$, so the strong maximum principle gives $E_\infty\equiv \sup_M E>0$. From \eqref{eq:power}, we obtain
\[
S_\infty=2n,\qquad \nabla A_\infty=0,\qquad
\sum_{\alpha<\beta<\gamma}
|A_\infty^\alpha|^2|A_\infty^\beta|^2|A_\infty^\gamma|^2=0.
\]
\eqref{eq:simons} now gives $\tr(\G_\infty^2)=nS_\infty=2n^2$, and hence $\sup_M E=2n^2$. The vanishing triple sum implies $\rank\G_\infty\le2$. Since $E_\infty>0$, there are exactly two nonzero eigenvalues. Their sum is $2n$ and their sum of squares is $2n^2$, so both are equal to $n$.

Since the two nonzero shape operators have squared norm $n$, substituting $S_\infty=2n$ and $E_\infty=2n^2$ into \eqref{eq:shape-row-bound} gives
\[
\sum_{\alpha=1}^2(A^\alpha_{kk})^2\le n-1.
\]
The Ricci tensor is diagonal in this tangent frame, and the Gauss equation gives $\operatorname{Ric}_\infty(e_k,e_k) =n-1-\sum_\alpha(A^\alpha_{kk})^2\ge0$.
\end{proof}

\begin{lemma}\label{lem:spectral}
Let $x:M^n\to\Sph^{n+q}$ be a minimal immersion of a complete connected manifold with flat normal bundle, where $n\ge3$. Assume $S<2n$ and set $\Omega=\{E>n^2\}$. Every $\varphi\in C_c^\infty(\Omega)$ satisfies
\begin{equation}\label{eq:spectral}
\lambda\int_M\varphi^2\le\int_M|\nabla\varphi|^2,
\end{equation}
where $\lambda=\frac{e^{-\frac{2n+1}{2}}}{12(n-1)}>0$.
\end{lemma}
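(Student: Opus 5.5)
The plan is to prove \eqref{eq:spectral} by a weighted integration-by-parts (Barta/Agmon-type) argument. The positive zeroth-order potential will come from combining the differential inequality for $E$ with the Simons identity \eqref{eq:simons}. The gradient terms will be absorbed by an exponential weight in $E$, and the weight will be normalized so that it stays in a bounded range on $\Omega$; this is where the factor $e^{-\frac{2n+1}{2}}$ should come from. Throughout, I use Lemma~\ref{lem:limit}, which gives $n^2<E\le 2n^2$ on $\Omega$. The weight therefore depends on a quantity confined to a bounded interval, such as $h=E/n^2\in(1,2]$.

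\textbf{Step 1: a pointwise inequality for $\log E$.} On $\Omega\subset\{E>0\}$, combine \eqref{eq:E-evolution} with Lemma~\ref{lem:coercive} to obtain
\[
 \Delta\log E\ \ge\ \frac{2}{3n(n-1)S}|\nabla A|^2-\frac{2n+5}{4}\frac{|\nabla E|^2}{E^2}+2(2n-S).
\]
Since $S<2n$, the last term is nonnegative but not uniformly positive. The first term must be turned into a uniform lower bound. For this I will use \eqref{eq:simons} in the form
\[
 |\nabla A|^2=\tfrac12\Delta S-nS+S^2-E ,
\]
together with $S^2\ge E+S^2/r$, $r\le n-1$, and $S\le 2n$. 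On $\Omega$ this should give a zeroth-order contribution of size comparable to $E/(n(n-1))\gtrsim 1/(n-1)$, at the cost of a term $\Delta S$ that will be integrated by parts.

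\textbf{Step 2: testing against $\varphi^2$ with a weight.} For $\varphi\in C_c^\infty(\Omega)$, multiply the inequality of Step 1 (after the substitution) by $\varphi^2 e^{-\kappa h}$ for a suitable $\kappa$, and integrate. Integrating by parts moves $\Delta\log E$ and $\Delta S$ onto $\nabla\varphi$ and $\nabla h$. The cross terms $2\varphi\langle\nabla\varphi,\nabla\log E\rangle$ are then split by Young's inequality into $|\nabla\varphi|^2$ and $\varphi^2|\nabla\log E|^2$. The choice $\kappa\approx\frac{2n+1}{2}$ is designed so that the resulting $\varphi^2|\nabla E|^2/E^2$ terms are dominated by the good term $-\frac{2n+1}{16}|\nabla E|^2/E$ already present in Lemma~\ref{lem:coercive}, or cancel against it. The $\nabla S$ terms are controlled by $|\nabla S|\le 2\sqrt S|\nabla A|$ and absorbed into the $|\nabla A|^2$ term.

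\textbf{Step 3: comparing the weights.} Since $h$ varies in an interval of length at most $1$ on $\Omega$, the weight $e^{-\kappa h}$ is pinched between two constants with ratio at most $e^{\kappa}=e^{\frac{2n+1}{2}}$. This converts the weighted inequality into \eqref{eq:spectral} with $\lambda=e^{-\frac{2n+1}{2}}/(12(n-1))$. The factor $12(n-1)$ should come from the coefficient $\frac{2}{3n(n-1)S}$ together with $S<2n$ and $E>n^2$.

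\textbf{Main obstacle.} The delicate point is Step 1: extracting a \emph{uniform} positive potential on $\Omega$ from the $|\nabla A|^2$ term and the Simons identity without reintroducing uncontrolled $\Delta S$ or $|\nabla S|^2$ contributions. If the direct substitution fails, I will instead work with a combined function such as $\log E-cS$. I would choose $c$ so that the Simons identity supplies the term $c(nS-S^2+E)$, and I will check that $nS-S^2+E$ has the right sign and size on $\{E>n^2,\ S<2n\}$ by using the rank bound $r\le n-1$.
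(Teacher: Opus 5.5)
Your proposal has a genuine gap in Step 1, and it is structural rather than technical: no uniform positive zeroth-order potential can be extracted on $\Omega$. The zeroth-order term that \eqref{eq:simons} contributes is $\tr(\G^2)-nS=S^2-E-nS$. Your bound $\tr(\G^2)\ge S^2/r$ only gives $S(S/r-n)$, which is \emph{negative} for $r\ge2$ and $S<2n$. Exactly, with $\delta=2n-S$ one has $E+nS-S^2=-(2n^2-E)+3n\delta-\delta^2$. So every zeroth-order quantity in play ($2n^2-E$, $2n-S$, $\sigma_3$, and $E+nS-S^2$) vanishes at the extremal configuration $E=2n^2$, $S=2n$, $\nabla A=0$. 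The same holds for your fallback $\log E-cS$.

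A clean way to see that the plan cannot close: your Steps 1--3 use $S<2n$ only through $S\le2n$, namely to keep $2(2n-S)\ge0$ and $1/S\ge1/(2n)$. Everything else you invoke (Lemma~\ref{lem:coercive}, \eqref{eq:simons}, $r\le n-1$, $n^2<E\le2n^2$) also holds on the product of three spheres of Example~\ref{example:3-sphere}. There $\nabla A=0$ and $E\equiv2n^2$, so $\Omega=M$ is compact and $\varphi\equiv1$ violates \eqref{eq:spectral}. Any weight such as $e^{-\kappa h}$ built smoothly from $E$ and $S$ is constant on that example, so it cannot act as a Barta function. The strict inequality has to enter essentially.

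The missing idea is that the potential must be \emph{relative}: you need a positive function that vanishes exactly at the extremal configuration, and you must divide by it. The paper takes $v=2n^2-E-\frac{2n-S}{24(n-1)}$. Combining \eqref{eq:simons}, \eqref{eq:E-evolution} and \eqref{eq:coercive}, the expansion above produces the linear term $-\frac{2n^2-E}{12(n-1)}$. The $\delta$-terms are absorbed by $-2E\delta\le-2n^2\delta$ coming from $-\Delta E$. The term $|\nabla E|^2/E$ is the bad term, not a good one; it is handled via $\nabla E=\frac{\nabla S}{24(n-1)}-\nabla v$ and $|\nabla S|^2\le4S|\nabla A|^2$. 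This yields $\Delta v\le\frac{2n+1}{2n^2}|\nabla v|^2-\frac{v}{12(n-1)}$ on $\Omega$.

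Positivity $v>0$ on $M$ is then a separate global step. It uses Omori--Yau on the complete manifold, the bound $E\le2n^2$ from Lemma~\ref{lem:limit}, and, at a zero of $v$, the strict inequality $S<2n$, which gives $\Delta v\le-n^2(2n-S)<0$. Finally, testing against $e^{-\frac{2n+1}{2n^2}v}\varphi^2/v$ converts the vanishing potential into the uniform constant $\frac{1}{12(n-1)}$. The factor $e^{-\frac{2n+1}{2}}$ comes from $0<v<n^2$ on $\Omega$, not from $E/n^2\in(1,2]$. Your constants match numerically, but the mechanism that produces them is absent from your plan.
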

\begin{proof}
On $\Omega$, the bounds $E>n^2$ and $S<2n$ give $\frac{E}{6n(n-1)S}\ge\frac{1}{12(n-1)}$. Set
\[
v=2n^2-E-\frac{2n-S}{24(n-1)}.
\]
Combining \eqref{eq:simons} and \eqref{eq:E-evolution} with \eqref{eq:coercive}, we obtain
\begin{equation}
\begin{aligned}
\Delta v
&=-\Delta E+\frac{|\nabla A|^2+E+nS-S^2}{12(n-1)}\\
&\le-\frac{|\nabla A|^2}{4(n-1)}
+\frac{2n+1}{4}\frac{|\nabla E|^2}{E}
-\frac{2n^2-E}{12(n-1)}
+\left(\frac{n}{4(n-1)}-2E\right)(2n-S)\\
&\quad-\frac{(2n-S)^2}{12(n-1)}
-12\sum_{\alpha<\beta<\gamma}|A^\alpha|^2|A^\beta|^2|A^\gamma|^2.
\end{aligned}
\label{eq:v-exact}
\end{equation}
Using $\nabla E=\frac{\nabla S}{24(n-1)}-\nabla v$ and $|\nabla S|^2\le4S|\nabla A|^2$, we obtain by Young's inequality
\[
\begin{aligned}
\frac{2n+1}{4}\frac{|\nabla E|^2}{E}
&\le\frac{(2n+1)S}{288(n-1)^2E}|\nabla A|^2
    +\frac{2n+1}{2E}|\nabla v|^2\\
&\le\frac{|\nabla A|^2}{12(n-1)}+\frac{2n+1}{2n^2}|\nabla v|^2.
\end{aligned}
\]
Substituting this bound in \eqref{eq:v-exact} and using $\frac{n}{4(n-1)}-2E\le-n^2$ on $\Omega$, we obtain
\begin{equation}\label{eq:v-estimate}
\begin{aligned}
\Delta v\le
-\frac{1}{6(n-1)}|\nabla A|^2+\frac{2n+1}{2n^2}|\nabla v|^2-\frac{2n^2-E}{12(n-1)}-n^2(2n-S).
\end{aligned}
\end{equation}

Suppose that $\inf_Mv<0$. By the Gauss equation, $\operatorname{Ric}\ge-(n+1)g$. Since $v$ is bounded below, the Omori--Yau maximum principle \cite{Omori1967,Yau1975} gives a sequence $\{p_j\}\subset M$ such that
\[
 v(p_j)\to\inf_Mv,\qquad |\nabla v|(p_j)\to0,\qquad
 \liminf_{j\to\infty}\Delta v(p_j)\ge0.
\]
For sufficiently large $j$, we have $v(p_j)<0$ and
\[
 E(p_j)>2n^2-\frac{2n-S(p_j)}{24(n-1)}
 \ge2n^2-\frac{n}{12(n-1)}>n^2.
\]
Hence $p_j\in\Omega$. By Lemma~\ref{lem:limit}, $E\le2n^2$, so $2n-S\ge-24(n-1)v$. It follows from \eqref{eq:v-estimate} that
\[
 \Delta v(p_j)\le\frac{2n+1}{2n^2}|\nabla v|^2(p_j)
 +24n^2(n-1)v(p_j).
\]
Letting $j\to\infty$, we obtain
\[
 0\le\liminf_{j\to\infty}\Delta v(p_j)
 \le24n^2(n-1)\inf_Mv<0,
\]
a contradiction. Therefore $v\ge0$.

If $v(p)=0$, the same bound puts $p$ in $\Omega$. At this minimum, $\nabla v(p)=0$ and $\Delta v(p)\ge0$, whereas \eqref{eq:v-estimate} gives $\Delta v(p)\le-n^2(2n-S(p))<0$. Therefore $v>0$.

On $\Omega$, we have $2n^2-E\ge v$, so
\[
\Delta v\le\frac{2n+1}{2n^2}|\nabla v|^2-\frac{v}{12(n-1)}.
\]
Since $v>0$ on the compact support of $\varphi\in C_c^\infty(\Omega)$, multiplying the preceding inequality by $e^{-\frac{2n+1}{2n^2}v}\frac{\varphi^2}{v}$ and integrating by parts gives
\[
\begin{aligned}
\frac{1}{12(n-1)}\int_M e^{-\frac{2n+1}{2n^2}v}\varphi^2
&\le\int_M e^{-\frac{2n+1}{2n^2}v}
\left(2\frac{\varphi}{v}\langle\nabla v,\nabla\varphi\rangle
-\frac{\varphi^2}{v^2}|\nabla v|^2\right)\\
&\le\int_M e^{-\frac{2n+1}{2n^2}v}|\nabla\varphi|^2.
\end{aligned}
\]
Since $0<v\le2n^2-E<n^2$ on $\Omega$, the exponential factor lies between $e^{-\frac{2n+1}{2}}$ and $1$. This proves \eqref{eq:spectral}.
\end{proof}

\begin{proof}[Proof of Theorem~\ref{thm:main}]
\leavevmode\par

Suppose $E$ is not identically zero. By Lemma~\ref{lem:limit}, there is a complete pointed smooth limit $M_\infty$ on which
\[
E_\infty\equiv2n^2,\qquad\operatorname{Ric}_\infty\ge0.
\]
Let $\varphi\in C_c^\infty(M_\infty)$. Using the convergence maps in Lemma~\ref{lem:compactness} on a relatively compact neighborhood of $\operatorname{supp}\varphi$ and extending by zero, we obtain $\varphi_j\in C_c^\infty(M)$. Since $E_\infty\equiv2n^2>n^2$, smooth convergence gives $\varphi_j\in C_c^\infty(\Omega)$ for sufficiently large $j$. Applying \eqref{eq:spectral} to $\varphi_j$ and letting $j\to\infty$, we obtain
\begin{equation}\label{eq:limit-spectral}
\lambda\int_{M_\infty}\varphi^2
\le\int_{M_\infty}|\nabla\varphi|^2.
\end{equation}

If $M_\infty$ is compact, taking $\varphi\equiv1$ in \eqref{eq:limit-spectral} gives a contradiction. If $M_\infty$ is noncompact, let $B_R$ denote the geodesic ball of radius $R$ centered at a fixed point. Choose a Lipschitz cutoff function $\varphi_R$ equal to $1$ on $B_R$ and supported in $B_{2R}$, with $|\nabla\varphi_R|\le\frac{2}{R}$. By smooth approximation and Bishop--Gromov comparison, \eqref{eq:limit-spectral} gives
\[
\lambda\le
\frac{\int_{M_\infty}|\nabla\varphi_R|^2}
{\int_{M_\infty}\varphi_R^2}
\le\frac{4}{R^2}
\frac{\operatorname{Vol}(B_{2R})}{\operatorname{Vol}(B_R)}
\le\frac{4\,2^n}{R^2}.
\]
Letting $R\to\infty$ gives $\lambda\le0$, a contradiction. Therefore $E\equiv0$, and the conclusion follows from Lemma~\ref{lem:codimension-reduction}.
\end{proof}

\section{Proof of Theorem~\ref{thm:lu-pinching}}\label{sec:lu-pinching}

At a critical point of $\frac{E}{S^p}$ with $E>0$, the relation $\nabla E=\frac{pE}{S}\nabla S$ leads to a modified weighted trace constraint. We therefore extend Lemma~\ref{lem:matrix} to this constraint.

Using $\Lambda,D,Q_\Lambda$ from Section~\ref{sec:matrix}, we define for $0\le p\le\frac{2}{3}$ and every real symmetric $r\times r$ matrix $\widetilde{T}$
\begin{equation}\label{eq:lu-q-p}
\begin{aligned}
 D_p&=D-\frac{p\tr(D)}{2}\Lambda,\\
 Q_{\Lambda,p}(\widetilde{T})
 &=3\tr(\widetilde{T}^2D_p)-2\tr(\Lambda \widetilde{T}\Lambda \widetilde{T})+\bigl[2+p(1-p)\tr(D)\bigr]
                  \bigl(\tr(\Lambda \widetilde{T})\bigr)^2.
\end{aligned}
\end{equation}

\begin{lemma}\label{lem:matrix-parameter}
If $\widetilde{T}$ is a real symmetric $r\times r$ matrix satisfying $\tr(D_p\widetilde{T})=0$, then
\begin{equation}\label{eq:lu-matrix}
 2\left(1-\frac{3p}{2}\right)\widetilde{T}D_p\widetilde{T}
 \preceq Q_{\Lambda,p}(\widetilde{T})I_r.
\end{equation}
\end{lemma}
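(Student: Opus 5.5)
Our plan is to run the proof of Lemma~\ref{lem:matrix} again with $D$ replaced by $D_p$ and $Q_\Lambda$ by $Q_{\Lambda,p}$, and to carry the factor $1-\frac{3p}{2}$ through each estimate. First we check that $D_p$ is positive definite for $0\le p\le\frac23$. Its entries are $(D_p)_{ii}=a_i\bigl(1-a_i-\frac{p}{2}\tr D\bigr)$. Since $\tr D=1-\sum_k a_k^2\le 1-a_i^2$ and $1-a_i-\frac13(1-a_i^2)=(1-a_i)\frac{2-a_i}{3}>0$, each diagonal entry is positive. As in Lemma~\ref{lem:matrix}, it suffices to prove
\[
2\Bigl(1-\tfrac{3p}{2}\Bigr)\bigl(w^{\mathsf T}\sqrt{D_p}\,\widetilde T v\bigr)^2\le Q_{\Lambda,p}(\widetilde T)|w|^2
\]
for every unit vector $v$ and every $w$. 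We will do this by computing the dual supremum of the left side over the hyperplane $\tr(D_p\widetilde T)=0$.

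Next we split $Q_{\Lambda,p}$ into diagonal and off-diagonal parts. The off-diagonal coefficients become
\[
c^{(p)}_{ij}=\tfrac32\bigl((D_p)_{ii}+(D_p)_{jj}\bigr)-2a_ia_j=c_{ij}-\tfrac{3p}{4}\tr D\,(a_i+a_j).
\]
The diagonal block, in the normalized coordinates $y_i=\sqrt{(D_p)_{ii}}\widetilde T_{ii}$, becomes
\[
3I_r+2P_p+p(1-p)\tr D\,uu^{\mathsf T},\qquad P_p=\Bigl(\tfrac{a_ia_j}{\sqrt{(D_p)_{ii}(D_p)_{jj}}}\Bigr)_{i\ne j},\quad u_i=\tfrac{a_i}{\sqrt{(D_p)_{ii}}}.
\]
The extra rank-one term comes from the enlarged coefficient of $(\tr\Lambda\widetilde T)^2$. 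It is nonnegative, and it can be used to offset the loss caused by the shrunken weights. The constraint says that $y$ is orthogonal to $\bigl(\sqrt{(D_p)_{ii}}\bigr)_i$.

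On the diagonal part we repeat the spectral argument. The identity $I+P=\sum_{i<j}a_ia_j(\cdots)^2$ used in Lemma~\ref{lem:matrix} now picks up a correction, because $\sum_{j\ne i}a_j=1-a_i$ no longer matches $(D_p)_{ii}/a_i$. We will therefore bound the restricted inverse by $\frac{1}{3}(I_r-2P_p)$ together with a projection term. The convexity estimate $\frac{1}{3+2\lambda}\le\frac{1-2\lambda}{3}$ still applies once the eigenvalues of $P_p$ on the constraint hyperplane lie in $[-1,0]$. On the off-diagonal part we form the same $2\times2$ blocks and get a determinant condition analogous to \eqref{eq:pair-spectral-bound}, with $b_{ij}$ replaced by a $p$-dependent $b^{(p)}_{ij}$. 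The factor $1-\frac{3p}{2}$ on the left absorbs the decrease $c^{(p)}_{ij}\ge(1-\frac{3p}{2})c_{ij}$, and similarly for the diagonal weights. This is the source of the threshold $p\le\frac23$.

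The main obstacle will be the pairwise determinant inequality for $b^{(p)}_{ij}$ and the corresponding diagonal coefficient bound replacing \eqref{eq:diagonal-weight-bound}. We would first try to reduce to $p=0$ by showing that every quantity is affine in $p\tr D$ and that the scaled inequality is linear in $p$. It would then suffice to check the endpoints $p=0$, which is Lemma~\ref{lem:matrix}, and $p=\frac23$, where the left side has factor $0$ and the claim reduces to $Q_{\Lambda,2/3}(\widetilde T)\ge0$ on the hyperplane. The positivity of $Q_{\Lambda,2/3}$ follows from the lower bound $Q\ge\sum(D_p)_{ii}\widetilde T_{ii}^2+2\sum c^{(p)}_{ij}\widetilde T_{ij}^2$ together with $c^{(p)}_{ij}\ge0$ when $p\le\frac23$.

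The $p$-dependence, however, is not exactly linear: the weights $\sqrt{D_p}$ inside the constraint move with $p$. If the interpolation fails, the fallback is a direct verification. We would bound $\tr D$ between $\frac23(2c_{ij}+a_ia_j)$ and $D_{ii}+D_{jj}+1-a_i-a_j$, exactly as in the proof of Lemma~\ref{lem:matrix}, and then verify the resulting polynomial inequalities in $a_i$, $a_j$ and $p$.
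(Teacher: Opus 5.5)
Your plan fails at the diagonal block, which is where Lemma~\ref{lem:matrix} does its real work. There, the constraint vector $(\sqrt{D_{ii}})_i$ is an eigenvector of $P$ with eigenvalue $1$. Hence the constraint hyperplane is $P$-invariant, $P$ has spectrum in $[-1,0]$ on it, and the restricted inverse is exactly $(3I_r+2P)^{-1}-\frac{1}{5\tr D}(\sqrt{D_{ii}D_{jj}})_{i,j}$.

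None of this survives with $D_p$. We have $\sum_j (P_p)_{ij}\sqrt{(D_p)_{jj}}=a_i(1-a_i)/\sqrt{(D_p)_{ii}}$, and for $p>0$ this is proportional to $\sqrt{(D_p)_{ii}}$ only when all $a_i$ coincide. Worse, the premise you invoke, that $P_p$ has eigenvalues in $[-1,0]$ on the constraint hyperplane, is false. Already for $r=2$ one has $(D_p)_{ii}=a_1a_2(1-pa_i)$ and $(P_p)_{12}=((1-pa_1)(1-pa_2))^{-1/2}$. The hyperplane is spanned by $(\sqrt{1-pa_2},-\sqrt{1-pa_1})$, and the Rayleigh quotient of $P_p$ there is $-\frac{2}{2-p}<-1$. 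When $a_1=a_2$ the vector $u$ is orthogonal to this direction, so the rank-one term gives nothing back. The diagonal form on the hyperplane is then only $\frac{2-3p}{2-p}|y|^2$, with $\frac{2-3p}{2-p}<1$. So the convexity step is unavailable, and ``bound the restricted inverse by $\frac13(I_r-2P_p)$ plus a projection term'' has no argument behind it.

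The interpolation fallback does not rescue this. The hyperplane, the weights $\sqrt{D_p}$, the product $(1-\frac{3p}{2})D_p$ and the coefficient $p(1-p)$ are not affine in $p$, so the two endpoints say nothing about intermediate $p$. Your endpoint argument is also wrong as stated. The bound $Q_{\Lambda,p}(\widetilde T)\ge\sum_i(D_p)_{ii}\widetilde T_{ii}^2+2\sum_{i<j}c^{(p)}_{ij}\widetilde T_{ij}^2$ came from $3I_r+2P\succeq I_r$, which fails for $P_p$. Take $r=2$, $a_1=a_2=\frac12$, $p=\frac23$, $\widetilde T_{11}=-\widetilde T_{22}=t$ and $\widetilde T_{12}=s$. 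Then $Q_{\Lambda,2/3}(\widetilde T)=0$, while your lower bound equals $\frac{t^2}{3}$. In particular $Q_{\Lambda,2/3}$ is only nonnegative on the hyperplane, not positive.

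The missing idea is to avoid redoing the dual computation and instead reduce to Lemma~\ref{lem:matrix} by a scalar shift. The condition $\tr(D_p\widetilde T)=0$ means $\tr(D\widetilde T)=\frac{p\tr D}{2}\tr(\Lambda\widetilde T)$. So $T=\widetilde T-\frac p2\tr(\Lambda\widetilde T)I_r$ satisfies $\tr(DT)=0$, and $2TDT\preceq Q_\Lambda(T)I_r$ holds verbatim.

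The paper then rewrites $2(2-p)^2\bigl[Q_{\Lambda,p}(\widetilde T)I_r-(2-3p)\widetilde TD_p\widetilde T\bigr]$ in terms of $T$. It is a combination, with coefficients nonnegative for $0<p\le\frac23$, of five terms: $Q_\Lambda(T)I_r-2TDT$, $\mathcal R_\Lambda(T)I_r$, the matrix of Lemma~\ref{lem:lu-mixed}, a square $X\Lambda X$, and a diagonal remainder. The genuinely new inputs are two inequalities. The first is $\mathcal R_\Lambda(T)=Q_\Lambda(T)-\tr(D)\tr(\Lambda T^2)\ge0$ on $\tr(DT)=0$ (Lemma~\ref{lem:lu-auxiliary}). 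The second is the mixed matrix inequality of Lemma~\ref{lem:lu-mixed}, which controls the cross term $\tr(\Lambda T)(TD+DT)$ created by the shift. Nothing in your plan plays their role.
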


For every real symmetric $r\times r$ matrix $T$, define
\[
\begin{aligned}
 \mathcal R_\Lambda(T)
 &=Q_\Lambda(T)-\tr(D)\tr(\Lambda T^2)\\
 &=3\tr(T^2D)-2\tr(\Lambda T\Lambda T)
       +2\bigl(\tr(\Lambda T)\bigr)^2-\tr(D)\tr(\Lambda T^2).
\end{aligned}
\]
\begin{lemma}\label{lem:lu-auxiliary}
If $T$ is a real symmetric $r\times r$ matrix satisfying $\tr(DT)=0$, then
\begin{equation}\label{eq:lu-diagonal-remainder}
\mathcal R_\Lambda(T)\ge2\sum_i a_i\bigl((2a_i-1)T_{ii}-\tr(\Lambda T)\bigr)^2\ge0.
\end{equation}
\end{lemma}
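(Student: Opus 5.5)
The plan is to reduce \eqref{eq:lu-diagonal-remainder} to two scalar inequalities by separating the diagonal and off-diagonal entries of $T$, as in the proof of Lemma~\ref{lem:matrix}. Write $\tau=\tr(\Lambda T)=\sum_i a_iT_{ii}$. Using $\tr(T^2D)=\sum_{i,j}D_{ii}T_{ij}^2$, $\tr(\Lambda T\Lambda T)=\sum_{i,j}a_ia_jT_{ij}^2$ and $\tr(\Lambda T^2)=\sum_{i,j}a_iT_{ij}^2$, we get
\[
 \mathcal R_\Lambda(T)=\sum_i\bigl(3D_{ii}-2a_i^2-\tr(D)a_i\bigr)T_{ii}^2+2\tau^2
 +\sum_{i<j}e_{ij}T_{ij}^2,
\]
where $e_{ij}=3(D_{ii}+D_{jj})-4a_ia_j-\tr(D)(a_i+a_j)$. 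The right-hand side of \eqref{eq:lu-diagonal-remainder} involves only the diagonal entries. The off-diagonal entries are unconstrained by $\tr(DT)=0$, so the first task is to show $e_{ij}\ge0$.

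For this I use $\tr D\le1-a_i^2-a_j^2$ and set $s=a_i+a_j\le1$. A direct expansion, writing $a_i^2+a_j^2=s^2-2a_ia_j$, should give
\[
 e_{ij}\ge(1-s)\bigl[s(2-s)+2a_ia_j\bigr]\ge0.
\]
This disposes of the off-diagonal part.

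For the diagonal part, the key simplification comes from the constraint. Since $D_{ii}=a_i-a_i^2$, the condition $\tr(DT)=0$ reads $\sum_ia_i^2T_{ii}=\sum_ia_iT_{ii}=\tau$. Consequently $\sum_ia_i(2a_i-1)T_{ii}=2\tau-\tau=\tau$. Expanding the square on the right-hand side of \eqref{eq:lu-diagonal-remainder} and using $\sum_ia_i=1$, it collapses to
\[
 2\sum_ia_i(2a_i-1)^2T_{ii}^2-2\tau^2.
\]
Subtracting, and substituting $\tr D=1-\sum_ka_k^2$, the claim reduces to the following statement on the hyperplane $\sum_iD_{ii}T_{ii}=0$:
\[
 \sum_ia_i\Bigl(3a_i-8a_i^2+\sum_ka_k^2\Bigr)T_{ii}^2+4\tau^2\ge0,
 \qquad\text{with }\tau=\sum_ia_iT_{ii}=\sum_ia_i^2T_{ii}.
\]
The second inequality in \eqref{eq:lu-diagonal-remainder} is immediate once the first holds.

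The main obstacle is this last diagonal inequality, because the coefficient $3a_i-8a_i^2+\sum_ka_k^2$ can be negative for large $a_i$. It must therefore be compensated by $4\tau^2$ and by the constraint. I would first try to write the form as an explicit sum of squares, exploiting both expressions $\tau=\sum_ia_iT_{ii}=\sum_ia_i^2T_{ii}$. For instance, I would write $4\tau^2$ as a suitable mixture of $(\sum_ia_iT_{ii})^2$, $(\sum_ia_i^2T_{ii})^2$ and $(\sum_ia_iT_{ii})(\sum_ia_i^2T_{ii})$. I would then expand $\sum_ka_k^2\sum_ia_iT_{ii}^2=\sum_{i,k}a_ia_k^2T_{ii}^2$ and pair its terms with the cross terms of $\tau^2$ to produce squares such as $a_ia_k(a_kT_{ii}-a_iT_{kk})^2$. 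This mirrors the identity for $y^{\mathsf T}(I_r+P)y$ in Lemma~\ref{lem:matrix}.

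If an explicit decomposition is not found quickly, the fallback is to minimize the form over the hyperplane by Lagrange multipliers. This reduces the claim to the nonnegativity of a scalar rational expression in $a_1,\ldots,a_r$. That expression could then be bounded using $a_i\le1$, $\sum_ia_i=1$ and $\sum_ka_k^2\ge a_i^2$, in the same spirit as the bounds on $b_{ij}$ in the proof of Lemma~\ref{lem:matrix}.
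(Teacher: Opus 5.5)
\textbf{There is a genuine gap.} Your reduction is correct, and up to this point it is exactly the paper's route:
\begin{itemize}
\item the splitting of $\mathcal R_\Lambda(T)$ into diagonal and off-diagonal parts;
\item the bound $e_{ij}\ge(1-s)[s(2-s)+2a_ia_j]\ge0$ (the paper writes $h_{ij}$ exactly, but your lower bound suffices);
\item the use of $\sum_ia_i^2T_{ii}=\sum_ia_iT_{ii}$ to arrive at the diagonal inequality, which is precisely the right-hand side of \eqref{eq:lu-strong-diagonal-form}.
\end{itemize}
The gap is that this diagonal inequality is the whole substance of the lemma, and you do not prove it. You only list strategies you would try.

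The inequality is delicate. Write $c(x)=\sum_ka_k^2+3x-8x^2$.
\begin{itemize}
\item The coefficient $c(a_1)$ of the largest weight is negative when $a_1$ is close to $1$.
\item The inequality is sharp: for $r=2$ the constraint forces $T_{22}=-T_{11}$, and the form vanishes identically. Any certificate must therefore be exact in that case, and crude bounds using only $a_i\le1$, $\sum_ia_i=1$, $\sum_ka_k^2\ge a_i^2$ cannot work.
\item Your sample square $a_ia_k(a_kT_{ii}-a_iT_{kk})^2$ does not produce the term you want to absorb: its diagonal contribution is $(\sum_ka_k^3)\sum_ia_iT_{ii}^2$, not $(\sum_ka_k^2)\sum_ia_iT_{ii}^2$.
\item The square that does match, $\sum_{i<k}a_ia_k(a_i+a_k)(T_{ii}-T_{kk})^2$, leaves on the constraint hyperplane the residual $2\sum_ia_i^2(1-4a_i)T_{ii}^2+6(\tr(\Lambda T))^2$. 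For $r=3$ and $a_1=a_2=a_3=\frac13$, this residual equals $-\frac{2}{27}\sum_iT_{ii}^2<0$.
\item The Lagrange-multiplier fallback cannot simply invert the diagonal, since one coefficient may be negative or zero.
\end{itemize}

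What is missing is a mechanism that isolates the single bad index and absorbs it through the two linear relations and the term $4(\tr(\Lambda T))^2$. The paper does this as follows.
\begin{enumerate}
\item For $r\ge3$, order $a_1\ge\cdots\ge a_r$. For $i\ge2$ one has $a_i<\frac12$ and $\sum_ka_k^2\ge2a_i^2$, so $c(a_i)\ge3a_i(1-2a_i)>0$. Only $c(a_1)$ can be negative, and one may assume $c(a_1)<0$.
\item Put $t=(\sum_{j\ge2}a_j^2)^{1/2}$. Then $a_i<t<\min\{a_1,1-a_1\}$ for $i\ge2$, and $c(t)>0$.
\item Concavity of $c$ gives $c(a_i)\ge(1-a_i/t)c(0)+(a_i/t)c(t)$ for $i\ge2$.
\item Apply Cauchy--Schwarz twice, with positive weights $a_i(1-a_i/t)$ and $a_i^2$. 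This bounds $\sum_{i\ge2}a_ic(a_i)T_{ii}^2$ below in terms of $\sum_{i\ge2}a_iT_{ii}=\tr(\Lambda T)-a_1T_{11}$ and $\sum_{i\ge2}a_i^2T_{ii}=\tr(\Lambda T)-a_1^2T_{11}$.
\item This reduces the claim to a binary quadratic form in $(T_{11},\tr(\Lambda T))$ with positive leading coefficient. Its discriminant \eqref{eq:lu-tail-discriminant} is checked to be nonpositive.
\end{enumerate}
Until you supply this step, or an explicit certificate of comparable strength, the proposal does not prove the lemma.
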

\begin{proof}
Since $T$ is symmetric,
\[
 \tr(\Lambda T^2)=\sum_i a_iT_{ii}^2+\sum_{i<j}(a_i+a_j)T_{ij}^2.
\]
Combining this with the expansion of $Q_\Lambda(T)$ and using $D_{ii}=a_i(1-a_i)$, we obtain
\[
\begin{aligned}
 \mathcal R_\Lambda(T)
 &=\sum_i\bigl[3D_{ii}-2a_i^2-\tr(D)a_i\bigr]T_{ii}^2
       +2\bigl(\tr(\Lambda T)\bigr)^2\\
 &\quad+\sum_{i<j}\bigl[3(D_{ii}+D_{jj})-4a_i a_j-\tr(D)(a_i+a_j)\bigr]T_{ij}^2\\
 &=\mathcal R_{\Lambda,\mathrm{diag}}(T)+\sum_{i<j}h_{ij}T_{ij}^2,
\end{aligned}
\]
where the diagonal part is
\[
 \mathcal R_{\Lambda,\mathrm{diag}}(T)=\sum_i a_i(3-\tr(D)-5a_i)T_{ii}^2+2\bigl(\tr(\Lambda T)\bigr)^2.
\]
From \eqref{eq:trace-D-expansion}, we obtain
\begin{equation}\label{eq:lu-offdiagonal-coefficient}
\begin{aligned}
 h_{ij}
 &=3(D_{ii}+D_{jj})-4a_i a_j-\tr(D)(a_i+a_j)\\
 &=3(a_i+a_j)-3(a_i+a_j)^2+2a_i a_j-\tr(D)(a_i+a_j)\\
 &=(a_i+a_j)\bigl[2-3(a_i+a_j)+(a_i+a_j)^2\bigr]+2a_i a_j(1-a_i-a_j)+(a_i+a_j)\sum_{k\ne i,j}a_k^2\\
 &=(a_i+a_j)(1-a_i-a_j)(2-a_i-a_j)+2a_i a_j(1-a_i-a_j)+(a_i+a_j)\sum_{k\ne i,j}a_k^2.
\end{aligned}
\end{equation}
Since $1-a_i-a_j=\sum_{k\ne i,j}a_k\ge0$ and $2-a_i-a_j\ge1$, we have $h_{ij}\ge0$. The trace constraint gives
\[
 0=\tr(DT)=\sum_i a_i(1-a_i)T_{ii}
   =\sum_i a_iT_{ii}-\sum_i a_i^2T_{ii},
\]
and hence
\[
 \sum_i a_i^2T_{ii}=\sum_i a_iT_{ii}=\tr(\Lambda T).
\]
Consequently,
\begin{equation}\label{eq:lu-strong-diagonal-form}
\begin{aligned}
 &\mathcal R_{\Lambda,\mathrm{diag}}(T)
       -2\sum_i a_i\bigl((2a_i-1)T_{ii}-\tr(\Lambda T)\bigr)^2\\
 =&\sum_i a_i\bigl(1-\tr(D)+3a_i-8a_i^2\bigr)T_{ii}^2+4\bigl(\tr(\Lambda T)\bigr)^2.
\end{aligned}
\end{equation}
It remains to show that the right-hand side of \eqref{eq:lu-strong-diagonal-form} is nonnegative. When $r=2$, we have $a_1+a_2=1$ and $D=a_1a_2I_2$, so the trace constraint becomes
\[
 0=\tr(DT)=a_1a_2(T_{11}+T_{22}),
\]
and hence $T_{22}=-T_{11}$ and $\tr(\Lambda T)=(a_1-a_2)T_{11}$. Substituting these relations and $\tr(D)=2a_1a_2$ into the right-hand side of \eqref{eq:lu-strong-diagonal-form}, we obtain
\[
\begin{aligned}
 &\sum_{i=1}^2a_i\bigl(1-\tr(D)+3a_i-8a_i^2\bigr)T_{ii}^2
       +4\bigl(\tr(\Lambda T)\bigr)^2\\
 =&\bigl[1-2a_1a_2+3(a_1^2+a_2^2)-8(a_1^3+a_2^3)+4(a_1-a_2)^2\bigr]T_{11}^2\\
 =&\bigl[1-2a_1a_2+3(1-2a_1a_2)-8(1-3a_1a_2)+4(1-4a_1a_2)\bigr]T_{11}^2=0.
\end{aligned}
\]
This proves \eqref{eq:lu-diagonal-remainder} for $r=2$. We may now assume $r\ge3$ and, after relabeling, $a_1\ge\cdots\ge a_r$.

Write $c(x)=1-\tr(D)+3x-8x^2$. For $i\ge2$, the ordering and \eqref{eq:trace-D-expansion} give $a_i<\frac{1}{2}$ and $1-\tr(D)\ge2a_i^2$, so
\[
 c(a_i)\ge3a_i(1-2a_i)>0.
\]
By \eqref{eq:lu-strong-diagonal-form}, we may assume $c(a_1)<0$.

Set $\tau=\sqrt{\sum_{j=2}^r a_j^2}$. Since $r\ge3$ and all $a_j$ are positive, $0<a_i<\tau$ for every $i\ge2$, while $\tau<\sum_{j=2}^r a_j=1-a_1$.

The assumption $c(a_1)<0$ also implies $\tau<a_1$. Indeed, if $a_1\le\tau$, then $a_1\le\tau<1-a_1$, so $a_1<\frac{1}{2}$ and
\[
 c(a_1)=a_1^2+\tau^2+3a_1-8a_1^2
 \ge3a_1(1-2a_1)>0,
\]
contrary to this assumption. Thus $0<\tau<\min\{a_1,1-a_1\}\le\frac{1}{2}$, and $a_1^2\ge\tau^2$ gives
\[
 c(\tau)=a_1^2+\tau^2+3\tau-8\tau^2
 \ge3\tau(1-2\tau)>0.
\]

Since $0<a_i<\tau$ for $i\ge2$, the concavity of $c$ gives
\[
\begin{aligned}
 c(a_i)\ge\left(1-\frac{a_i}{\tau}\right)c(0)+\frac{a_i}{\tau}c(\tau)=\left(1-\frac{a_i}{\tau}\right)(a_1^2+\tau^2)+\frac{a_i}{\tau}c(\tau).
\end{aligned}
\]
The weights $a_i(1-\frac{a_i}{\tau})$ and $a_i^2$ are positive for $i\ge2$, and their sums are $1-a_1-\tau$ and $\tau^2$, respectively. Cauchy--Schwarz gives
\[
\begin{aligned}
 \left(\sum_{i=2}^r a_i\left(1-\frac{a_i}{\tau}\right)T_{ii}\right)^2
 &=\left(\sum_{i=2}^r
       \sqrt{a_i\left(1-\frac{a_i}{\tau}\right)}
       \sqrt{a_i\left(1-\frac{a_i}{\tau}\right)}T_{ii}\right)^2\\
 &\le(1-a_1-\tau)\sum_{i=2}^r a_i\left(1-\frac{a_i}{\tau}\right)T_{ii}^2,\\
 \left(\sum_{i=2}^r a_i^2T_{ii}\right)^2
 &=\left(\sum_{i=2}^r a_i(a_iT_{ii})\right)^2
 \le\tau^2\sum_{i=2}^r a_i^2T_{ii}^2.
\end{aligned}
\]
Applying these inequalities to the lower bound for $c(a_i)$, we obtain
\[
\begin{aligned}
 \sum_{i=2}^r a_i c(a_i)T_{ii}^2
 &\ge (a_1^2+\tau^2)\sum_{i=2}^r a_i\left(1-\frac{a_i}{\tau}\right)T_{ii}^2
       +\frac{c(\tau)}{\tau}\sum_{i=2}^r a_i^2T_{ii}^2\\
 &\ge \frac{a_1^2+\tau^2}{1-a_1-\tau}
       \left(\sum_{i=2}^r a_i\left(1-\frac{a_i}{\tau}\right)T_{ii}\right)^2
       +\frac{c(\tau)}{\tau^3}\left(\sum_{i=2}^r a_i^2T_{ii}\right)^2.
\end{aligned}
\]
The trace constraint gives
\[
 \sum_{i=2}^r a_iT_{ii}=\tr(\Lambda T)-a_1T_{11},\qquad
 \sum_{i=2}^r a_i^2T_{ii}=\tr(\Lambda T)-a_1^2T_{11}.
\]
Consequently,
\[
\begin{aligned}
 \sum_{i=2}^r a_i\left(1-\frac{a_i}{\tau}\right)T_{ii}
 &=\tr(\Lambda T)-a_1T_{11}
       -\frac{\tr(\Lambda T)-a_1^2T_{11}}{\tau}\\
 &=-\frac{(1-\tau)\tr(\Lambda T)+a_1(\tau-a_1)T_{11}}{\tau}.
\end{aligned}
\]
Substituting the above identities, we obtain
\begin{equation}\label{eq:lu-tail-energy}
\begin{aligned}
 \sum_i a_i c(a_i)T_{ii}^2+4\bigl(\tr(\Lambda T)\bigr)^2\ge& a_1c(a_1)T_{11}^2+4\bigl(\tr(\Lambda T)\bigr)^2+\frac{c(\tau)}{\tau^3}\bigl(\tr(\Lambda T)-a_1^2T_{11}\bigr)^2\\
 &+\frac{a_1^2+\tau^2}{(1-a_1-\tau)\tau^2}
       \bigl[(1-\tau)\tr(\Lambda T)+a_1(\tau-a_1)T_{11}\bigr]^2\\
 =&\left[4+\frac{c(\tau)}{\tau^3}
       +\frac{(a_1^2+\tau^2)(1-\tau)^2}{(1-a_1-\tau)\tau^2}\right]
       \bigl(\tr(\Lambda T)\bigr)^2\\
 &+2\left[-\frac{a_1^2c(\tau)}{\tau^3}
       +\frac{a_1(a_1^2+\tau^2)(\tau-a_1)(1-\tau)}{(1-a_1-\tau)\tau^2}\right]
       T_{11}\tr(\Lambda T)\\
 &+\left[a_1c(a_1)+\frac{a_1^4c(\tau)}{\tau^3}
       +\frac{a_1^2(a_1^2+\tau^2)(\tau-a_1)^2}{(1-a_1-\tau)\tau^2}\right]T_{11}^2.
\end{aligned}
\end{equation}

For fixed $T_{11}$, the lower bound in \eqref{eq:lu-tail-energy} is quadratic in $\tr(\Lambda T)$, with positive leading coefficient since $c(\tau)>0$ and $1-a_1-\tau>0$. Substituting $c(a_1)=\tau^2+3a_1-7a_1^2$ and $c(\tau)=a_1^2+3\tau-7\tau^2$, we compute
\begin{equation}\label{eq:lu-tail-discriminant}
\begin{aligned}
 \operatorname{disc}
 =-\frac{4a_1T_{11}^2}{\tau^3}\Bigl\{&
 (1-a_1-\tau)^2(a_1+\tau)^3(3-a_1-\tau)\\
 &+a_1\tau(1-a_1-\tau)\bigl[2+4(a_1+\tau)^2+7(1-a_1-\tau)^3\bigr]\\
 &+4a_1^2\tau^2\bigl[(a_1+\tau)(3-2a_1-2\tau)-a_1\tau\bigr]\Bigr\}.
\end{aligned}
\end{equation}
Since
\[
\begin{aligned}
 (a_1+\tau)(3-2a_1-2\tau)-a_1\tau=a_1+\tau(1-a_1)+2(1-a_1-\tau)(a_1+\tau)>0,
\end{aligned}
\]
every term in braces is nonnegative, so the discriminant is nonpositive and the polynomial is nonnegative. By \eqref{eq:lu-tail-energy}, the right-hand side of \eqref{eq:lu-strong-diagonal-form} is therefore nonnegative, proving the required bound.
\end{proof}

\begin{lemma}\label{lem:lu-mixed}
If $T$ is a real symmetric $r\times r$ matrix satisfying $\tr(DT)=0$, then
\begin{equation}\label{eq:lu-mixed}
 \mathcal R_\Lambda(T) I_r+\tr(D)\bigl(\tr(\Lambda T)\bigr)^2I_r+\frac{2\tr(D)}{3}T\Lambda T-\tr(\Lambda T)(TD+DT)\succeq0.
\end{equation}
\end{lemma}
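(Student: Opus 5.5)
We need to prove that the symmetric matrix
\[
M(T)=\mathcal R_\Lambda(T) I_r+\tr(D)\bigl(\tr(\Lambda T)\bigr)^2I_r+\tfrac{2\tr(D)}{3}T\Lambda T-\tr(\Lambda T)(TD+DT)
\]
is positive semidefinite whenever $\tr(DT)=0$, i.e.\ that $w^{\mathsf T}M(T)w\ge0$ for every $w\in\R^r$. Write $s=\tr(\Lambda T)$ and $u=Tw$. The term $\frac{2\tr(D)}{3}u^{\mathsf T}\Lambda u$ is nonnegative. The only indefinite term is $-2s\,w^{\mathsf T}DTw=-2s\langle Dw,u\rangle$. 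The plan is to absorb this term by Cauchy--Schwarz against the two nonnegative quadratic pieces we control. One is the term $\frac{2\tr(D)}{3}|\sqrt\Lambda u|^2$. The other is a part of $\bigl(\mathcal R_\Lambda(T)+\tr(D)s^2\bigr)|w|^2$.

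\textbf{Step 1 (Young's inequality).} Since $D=\Lambda(I_r-\Lambda)$, we have $\langle Dw,u\rangle=\langle \sqrt\Lambda(I_r-\Lambda)w,\sqrt\Lambda u\rangle$. Hence, for any $\epsilon>0$,
\[
2|s||\langle Dw,u\rangle|\le \epsilon\,u^{\mathsf T}\Lambda u+\epsilon^{-1}s^2\,w^{\mathsf T}\Lambda(I_r-\Lambda)^2w .
\]
Take $\epsilon=\frac{2\tr(D)}{3}$. It then remains to show
\[
\mathcal R_\Lambda(T)|w|^2+\tr(D)s^2|w|^2\ge\frac{3s^2}{2\tr(D)}\max_i a_i(1-a_i)^2\,|w|^2 .
\]
This reduces the matrix inequality to a scalar one: $\mathcal R_\Lambda(T)\ge s^2\bigl(\frac{3\max_i a_i(1-a_i)^2}{2\tr D}-\tr D\bigr)$.

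\textbf{Step 2 (a lower bound for $\mathcal R_\Lambda$ in terms of $s^2$).} Lemma~\ref{lem:lu-auxiliary} gives
\[
\mathcal R_\Lambda(T)\ge 2\sum_i a_i\bigl((2a_i-1)T_{ii}-s\bigr)^2 .
\]
We minimize the right-hand side over diagonal entries subject to $\sum_i a_iT_{ii}=s$ and $\sum_i a_i^2T_{ii}=s$, both of which follow from $\tr(DT)=0$. Equivalently, set $y_i=(2a_i-1)T_{ii}-s$; this is a weighted least-squares problem with linear constraints. We would compute its minimum explicitly, using Lagrange multipliers or Cauchy--Schwarz, as a constant times $s^2$. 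The target is a bound of the form $\mathcal R_\Lambda(T)\ge \kappa(\Lambda)s^2$, where $\kappa$ is expressed through $\sum a_i/(1-2a_i)$-type sums.

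\textbf{Main obstacle.} The main obstacle is that the crude Young split in Step 1 may lose too much. A single $a_i$ near $1/2$ makes $1-2a_i$ vanish, and then Lemma~\ref{lem:lu-auxiliary} yields a weak bound. If the scalar inequality fails, we would refine Step 1 in two ways. First, keep the exact off-diagonal contribution $h_{ij}T_{ij}^2$ from the proof of Lemma~\ref{lem:lu-auxiliary}. Second, use $\epsilon$ componentwise: for a fixed $w$, minimize over $u$ exactly. This gives the Schur-complement condition
\[
(\mathcal R_\Lambda+\tr(D)s^2)|w|^2\ge \frac{3s^2}{2\tr D}\sum_i a_i(1-a_i)^2w_i^2 ,
\]
which only needs $\mathcal R_\Lambda+\tr(D)s^2\ge \frac{3s^2}{2\tr D}\max_i a_i(1-a_i)^2$. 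Note that $\max_i a_i(1-a_i)^2\le \max_i D_{ii}\le\tr D$, since $\tr D-D_{ii}\ge D_{ii}-2\sum_{j<k,\,j,k\neq i}a_ja_k\ge$ (cf.\ \eqref{eq:diagonal-weight-bound}). So the needed coefficient is at most $\frac32-\tr D$ times $s^2$, and it suffices to show $\mathcal R_\Lambda(T)\ge(\frac12 - \tr D)^+ s^2$ or the sharper max-form. We would verify this separately in the cases $r=2$ (direct computation as in Lemma~\ref{lem:lu-auxiliary}) and $r\ge3$ with $a_1$ largest, mirroring the $\tau$-argument there.
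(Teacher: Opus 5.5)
Your Step 1 is precisely the paper's first case. When $\tr(D)\ge\frac12$, the bound $a_i(1-a_i)^2\le\frac{4}{27}$ gives $\frac{3}{2\tr D}\max_i a_i(1-a_i)^2\le\frac{2}{9\tr D}\le\tr D$, so $\mathcal R_\Lambda(T)\ge0$ from Lemma~\ref{lem:lu-auxiliary} already finishes.

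For $\tr(D)<\frac12$, however, the scalar inequality you reduce to is false, so no lower bound on $\mathcal R_\Lambda$ in Step 2 can complete the argument. Take $r=2$, $a=(0.9,0.1)$ and $T=\diag(1,-1)$. Then $\tr(DT)=0$, $s=0.8$, $\tr D=0.18$, and $\mathcal R_\Lambda(T)=3(0.18)-2(0.82)+2(0.64)-0.18=0$. In fact, for $r=2$ one has $h_{12}=0$ and $\mathcal R_\Lambda\equiv0$ on $\{\tr(DT)=0\}$, while $s=(a_1-a_2)T_{11}$ need not vanish. Your requirement at $w=e_2$ reads $\tr(D)s^2\ge\frac{3s^2}{2\tr D}a_2(1-a_2)^2$, i.e.\ $0.1152\ge0.432$. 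This fails, although the actual entry is $M(T)_{22}=0.2712>0$. For the same reason, your fallback $\mathcal R_\Lambda(T)\ge(\frac12-\tr D)^+s^2$ fails for every $r=2$ configuration with $a_1\ne a_2$.

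Your proposed refinements do not help. Minimizing $\frac{2\tr D}{3}u^{\mathsf T}\Lambda u-2s\langle Dw,u\rangle$ over a free $u$ is exactly the Young step. Keeping $h_{ij}T_{ij}^2$ changes nothing here, since $T_{12}=0=h_{12}$ in the example. The loss comes from treating $u=Tw$ as independent of $s=\tr(\Lambda T)$.

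The missing idea, which the paper uses for $\tr(D)<\frac12$, is a different split and a different centre for the square:
\begin{itemize}
\item Write $D=\tr(D)\Lambda-W$ with $W=\Lambda(\Lambda-(1-\tr D)I_r)$.
\item Complete the square around $\frac{3s}{2}I_r$ rather than $\frac{3s}{2\tr D}(I_r-\Lambda)$. This absorbs the dominant part $\tr(D)s(T\Lambda+\Lambda T)$ of the cross term, at the cost of $-\frac32\tr(D)s^2\Lambda$.
\item What remains is $\mathcal R_\Lambda(T)I_r+s(TW+WT)+\tr(D)s^2(I_r-\frac32\Lambda)$.
\end{itemize}
The residual cross term is linear in the individual entries of $T$. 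It is paired entrywise against the parts of $\mathcal R_\Lambda(T)$ that control those same entries:
\begin{itemize}
\item On the diagonal, the sharpened bound $\mathcal R_{\Lambda,\mathrm{diag}}(T)\ge\frac{2a_i}{1-a_i}\bigl((2a_i-1)T_{ii}-s\bigr)^2$, combined with a completed square in $T_{ii}$, gives \eqref{eq:lu-single-index}.
\item Off the diagonal, one uses $h_{ij}\ge|W_{ii}+W_{jj}|$ together with $|W_{ii}+W_{jj}|\le\frac{\tr D}{2}$.
\end{itemize}
Together these leave a nonnegative remainder $\frac{\tr D}{4}s^2$. The cross term is never bounded against a free vector $Tw$.
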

\begin{proof}
Suppose first that $\tr(D)\ge\frac{1}{2}$. Completing the square, we obtain
\begin{align*}
 &\frac{2\tr(D)}{3}T\Lambda T-\tr(\Lambda T)(TD+DT)+\tr(D)\bigl(\tr(\Lambda T)\bigr)^2I_r\\
 =&\frac{2\tr(D)}{3}
 \left[T-\frac{3\tr(\Lambda T)}{2\tr(D)}(I_r-\Lambda)\right]
 \Lambda
 \left[T-\frac{3\tr(\Lambda T)}{2\tr(D)}(I_r-\Lambda)\right]\\
 &+\bigl(\tr(\Lambda T)\bigr)^2\left[\tr(D)I_r-\frac{3}{2\tr(D)}\Lambda(I_r-\Lambda)^2\right].
\end{align*}
Since $a(1-a)^2\le\frac{4}{27}$ for $0\le a\le1$, we have
\[
 \tr(D)I_r-\frac{3}{2\tr(D)}\Lambda(I_r-\Lambda)^2
 \succeq\left(\tr(D)-\frac{2}{9\tr(D)}\right)I_r
 \succeq\frac{1}{18}I_r.
\]
Together with $\mathcal R_\Lambda(T)\ge0$ from Lemma~\ref{lem:lu-auxiliary}, these inequalities prove \eqref{eq:lu-mixed} in this case.

Now let $0<\tr(D)<\frac{1}{2}$. Write $D=\tr(D)\Lambda-W$, where $W=\Lambda(\Lambda-(1-\tr(D))I_r)$, so that $TD+DT=\tr(D)(T\Lambda+\Lambda T)-(TW+WT)$. Completing the square gives
\begin{equation}\label{eq:lu-mixed-shift}
\begin{aligned}
 &\mathcal R_\Lambda(T) I_r+\tr(D)\bigl(\tr(\Lambda T)\bigr)^2I_r+\frac{2\tr(D)}{3}T\Lambda T-\tr(\Lambda T)(TD+DT)\\
 =&\frac{2\tr(D)}{3}\left(T-\frac{3\tr(\Lambda T)}{2}I_r\right)
       \Lambda\left(T-\frac{3\tr(\Lambda T)}{2}I_r\right)\\
 &+\mathcal R_\Lambda(T) I_r+\tr(\Lambda T)(TW+WT)+\tr(D)\bigl(\tr(\Lambda T)\bigr)^2\left(I_r-\frac{3}{2}\Lambda\right).
\end{aligned}
\end{equation}
The first term is positive semidefinite. For every unit vector $v=(v_i)\in\R^r$, the decomposition of $\mathcal R_\Lambda(T)$ gives
\[
\begin{aligned}
 &v^{\mathsf T}\left[\mathcal R_\Lambda(T)I_r
       +\tr(\Lambda T)(TW+WT)
       +\tr(D)\bigl(\tr(\Lambda T)\bigr)^2
         \left(I_r-\frac{3}{2}\Lambda\right)\right]v\\
 =&\sum_i\left[\mathcal R_{\Lambda,\mathrm{diag}}(T)
       +2\tr(\Lambda T)W_{ii}T_{ii}
       +\tr(D)\bigl(\tr(\Lambda T)\bigr)^2
         \left(1-\frac{3}{2}a_i\right)\right]v_i^2\\
 &+\sum_{i<j}\left[h_{ij}T_{ij}^2
       +2\tr(\Lambda T)(W_{ii}+W_{jj})T_{ij}v_i v_j\right].
\end{aligned}
\]
For the diagonal sum, we use $W_{ii}=a_i(a_i-1+\tr(D))$ and prove the following lower bound for each $i$:
\begin{equation}\label{eq:lu-single-index}
 \mathcal R_{\Lambda,\mathrm{diag}}(T)
 +2\tr(\Lambda T) a_i(a_i-1+\tr(D))T_{ii}\ge \tr(D) a_i\bigl(\tr(\Lambda T)\bigr)^2.
\end{equation}
The trace constraint gives
\[
 \sum_j a_j\bigl((2a_j-1)T_{jj}-\tr(\Lambda T)\bigr)=0.
\]
For each fixed $i$, Cauchy--Schwarz yields
\[
\begin{aligned}
 a_i^2\bigl((2a_i-1)T_{ii}-\tr(\Lambda T)\bigr)^2=&\left[\sum_{j\ne i}a_j
       \bigl((2a_j-1)T_{jj}-\tr(\Lambda T)\bigr)\right]^2\\
 \le&(1-a_i)\sum_{j\ne i}a_j
       \bigl((2a_j-1)T_{jj}-\tr(\Lambda T)\bigr)^2.
\end{aligned}
\]
Combining this with the diagonal bound from the preceding proof, we obtain
\[
\begin{aligned}
 \mathcal R_{\Lambda,\mathrm{diag}}(T)
 &\ge2\sum_j a_j
       \bigl((2a_j-1)T_{jj}-\tr(\Lambda T)\bigr)^2\\
 &\ge\left(2a_i+\frac{2a_i^2}{1-a_i}\right)
       \bigl((2a_i-1)T_{ii}-\tr(\Lambda T)\bigr)^2\\
 &=\frac{2a_i}{1-a_i}
       \bigl((2a_i-1)T_{ii}-\tr(\Lambda T)\bigr)^2.
\end{aligned}
\]
By \eqref{eq:diagonal-weight-bound}, $2a_i(1-a_i)\le\tr(D)<\frac{1}{2}$, so $a_i\ne\frac{1}{2}$. Completing the square in $T_{ii}$ gives
\begin{align*}
 &\mathcal R_{\Lambda,\mathrm{diag}}(T)
 +2\tr(\Lambda T)a_i(a_i-1+\tr(D))T_{ii}
 -\tr(D)a_i\bigl(\tr(\Lambda T)\bigr)^2\\
 \ge&\frac{2a_i}{1-a_i}
       \bigl((2a_i-1)T_{ii}-\tr(\Lambda T)\bigr)^2\\
 &+2a_i(a_i-1+\tr(D))\tr(\Lambda T)T_{ii}
 -\tr(D)a_i\bigl(\tr(\Lambda T)\bigr)^2\\
 =&\frac{2a_i}{1-a_i}
 \left[(2a_i-1)T_{ii}-\tr(\Lambda T)
 +\frac{(1-a_i)(a_i-1+\tr(D))}{2(2a_i-1)}
       \tr(\Lambda T)\right]^2\\
 &+a_i\left[
       \frac{2(a_i-1+\tr(D))}{2a_i-1}-\tr(D)
       -\frac{1-a_i}{2}
        \left(\frac{a_i-1+\tr(D)}{2a_i-1}\right)^2
       \right]\bigl(\tr(\Lambda T)\bigr)^2.
\end{align*}
The square is nonnegative. For fixed $a_i$, the bracket in the second term is a concave quadratic in $\tr(D)$ on $[2a_i(1-a_i),\frac{1}{2}]$. Its endpoint values are, respectively,
\[
 \frac{(1-a_i)^2(3+a_i)}{2}>0,
 \qquad \frac{3+a_i}{8}>0.
\]
Thus the bracket is positive, proving \eqref{eq:lu-single-index}. Applying this estimate to the diagonal sum and using $a_i\le1$ and $\sum_i v_i^2=1$, we obtain
\[
\begin{aligned}
 &\sum_i\left[\mathcal R_{\Lambda,\mathrm{diag}}(T)
       +2\tr(\Lambda T)W_{ii}T_{ii}
       +\tr(D)\bigl(\tr(\Lambda T)\bigr)^2
         \left(1-\frac{3}{2}a_i\right)\right]v_i^2\\
 \ge&\tr(D)\bigl(\tr(\Lambda T)\bigr)^2
       \sum_i\left(1-\frac{a_i}{2}\right)v_i^2
 \ge\frac{\tr(D)}{2}\bigl(\tr(\Lambda T)\bigr)^2.
\end{aligned}
\]

For each $i<j$,
\[
 W_{ii}+W_{jj}
 =(1-a_i-a_j)(a_i^2+a_j^2)
       -(a_i+a_j)\sum_{k\ne i,j}a_k^2.
\]
Both quantities in this difference lie between $0$ and $(a_i+a_j)(1-a_i-a_j)$. Together with $\tr(D)\ge2(a_i+a_j)(1-a_i-a_j)$, this gives
\[
 |W_{ii}+W_{jj}|\le(a_i+a_j)(1-a_i-a_j)\le\frac{\tr(D)}{2}.
\]
From \eqref{eq:lu-offdiagonal-coefficient}, we have
\[
 h_{ij}=2(a_i+a_j)(1-a_i-a_j)-(W_{ii}+W_{jj})
       \ge |W_{ii}+W_{jj}|.
\]
For $h_{ij}>0$, these bounds and completion of the square give
\begin{align*}
 &h_{ij}T_{ij}^2
       +2\tr(\Lambda T)(W_{ii}+W_{jj})T_{ij}v_i v_j\\
 =&h_{ij}\left[T_{ij}
       +\frac{\tr(\Lambda T)(W_{ii}+W_{jj})}{h_{ij}}v_i v_j\right]^2
       -\bigl(\tr(\Lambda T)\bigr)^2\frac{(W_{ii}+W_{jj})^2}{h_{ij}}v_i^2v_j^2\\
 \ge&-\bigl(\tr(\Lambda T)\bigr)^2\frac{(W_{ii}+W_{jj})^2}{h_{ij}}v_i^2v_j^2\\
 \ge&-\frac{\tr(D)\bigl(\tr(\Lambda T)\bigr)^2}{2}v_i^2v_j^2.
\end{align*}
If $h_{ij}=0$, then $W_{ii}+W_{jj}=0$, and the final bound holds directly. Since $\sum_{i<j}v_i^2v_j^2=\frac{1-\sum_i v_i^4}{2}\le\frac{1}{2}$, summing over pairs gives
\[
\begin{aligned}
 &\sum_{i<j}\left[h_{ij}T_{ij}^2
       +2\tr(\Lambda T)(W_{ii}+W_{jj})T_{ij}v_i v_j\right]\\
 \ge&-\frac{\tr(D)\bigl(\tr(\Lambda T)\bigr)^2}{2}\sum_{i<j}v_i^2v_j^2
 \ge-\frac{\tr(D)}{4}\bigl(\tr(\Lambda T)\bigr)^2.
\end{aligned}
\]
Combining the diagonal and off diagonal bounds gives, for every unit vector $v$,
\[
\begin{aligned}
 &v^{\mathsf T}\left[\mathcal R_\Lambda(T)I_r
       +\tr(\Lambda T)(TW+WT)
       +\tr(D)\bigl(\tr(\Lambda T)\bigr)^2
         \left(I_r-\frac{3}{2}\Lambda\right)\right]v\\
 \ge&\frac{\tr(D)}{2}\bigl(\tr(\Lambda T)\bigr)^2
       -\frac{\tr(D)}{4}\bigl(\tr(\Lambda T)\bigr)^2
 =\frac{\tr(D)}{4}\bigl(\tr(\Lambda T)\bigr)^2\ge0.
\end{aligned}
\]
Combining this inequality with \eqref{eq:lu-mixed-shift}, we obtain \eqref{eq:lu-mixed}.
\end{proof}

\begin{proof}[Proof of Lemma~\ref{lem:matrix-parameter}]
The case $p=0$ follows from Lemma~\ref{lem:matrix}, so assume $0<p\le\frac{2}{3}$. The bound $\tr(D)\le2(1-a_i)$ gives $(D_p)_{ii}\ge(1-p)a_i(1-a_i)>0$, so $D_p$ is positive definite.

Set
\[
 T=\widetilde{T}-\frac{\tr(D\widetilde{T})}{\tr D}I_r
     =\widetilde{T}-\frac{p}{2}\tr(\Lambda \widetilde{T})I_r.
\]
Then $\tr(DT)=0$, and Lemma~\ref{lem:matrix} yields $2TDT\preceq Q_\Lambda(T)I_r$. Since $\tr\Lambda=1$, we also have $\tr(\Lambda T)=(1-\frac{p}{2})\tr(\Lambda \widetilde{T})$, and hence
\[
 \widetilde{T}=T+\frac{p}{2-p}\tr(\Lambda T)I_r.
\]
Using \eqref{eq:lu-q-p} and expanding $\widetilde{T}D\widetilde{T}$, we obtain
\begin{equation}\label{eq:lu-shift-product}
\begin{aligned}
 \widetilde{T}D_p\widetilde{T}
 &=\widetilde{T}D\widetilde{T}-\frac{p\tr(D)}{2}\widetilde{T}\Lambda \widetilde{T}\\
 &=TDT+\frac{p}{2-p}\tr(\Lambda T)(TD+DT)+\frac{p^2}{(2-p)^2}\bigl(\tr(\Lambda T)\bigr)^2D
       -\frac{p\tr(D)}{2}\widetilde{T}\Lambda \widetilde{T}.
\end{aligned}
\end{equation}

Substituting the above expression for $\widetilde{T}$ into $Q_{\Lambda,p}$ and using $\tr(DT)=0$ and the definition of $\mathcal R_\Lambda(T)$, we obtain
\[
\begin{aligned}
 2(2-p)Q_{\Lambda,p}(\widetilde{T})
 =&2(2-p)Q_\Lambda(T)-3p(2-p)\tr(D)\tr(\Lambda T^2)
       +p(4-3p)\tr(D)\bigl(\tr(\Lambda T)\bigr)^2\\
 =&(2-p)(2-3p)Q_\Lambda(T)+3p(2-p)\mathcal R_\Lambda(T)
       +p(4-3p)\tr(D)\bigl(\tr(\Lambda T)\bigr)^2.
\end{aligned}
\]
Substituting the scalar shift into \eqref{eq:lu-shift-product} and combining it with the preceding identity, we obtain
\[
\begin{aligned}
 &2(2-p)^2\bigl[Q_{\Lambda,p}(\widetilde{T})I_r-(2-3p)\widetilde{T}D_p\widetilde{T}\bigr]\\
 =&(2-p)^2(2-3p)\bigl[Q_\Lambda(T)I_r-2TDT\bigr]+3p(2-p)^2\mathcal R_\Lambda(T)I_r-2p^2(2-3p)\bigl(\tr(\Lambda T)\bigr)^2D\\
 &-2p(2-p)(2-3p)\tr(\Lambda T)(TD+DT)+p(2-p)(4-3p)\tr(D)\bigl(\tr(\Lambda T)\bigr)^2I_r\\
 &+p(2-3p)\tr(D)
       \bigl((2-p)T+p\tr(\Lambda T)I_r\bigr)
       \Lambda\bigl((2-p)T+p\tr(\Lambda T)I_r\bigr).
\end{aligned}
\]
Subtracting $2p(2-p)(2-3p)$ times the matrix in \eqref{eq:lu-mixed} cancels the term involving $TD+DT$. Completing the remaining products involving $T$ to a square, we obtain
\[
\begin{aligned}
 &p(2-3p)\tr(D)
       \bigl((2-p)T+p\tr(\Lambda T)I_r\bigr)
       \Lambda\bigl((2-p)T+p\tr(\Lambda T)I_r\bigr)\\
 &-\frac{4p(2-p)(2-3p)}{3}\tr(D)T\Lambda T\\
 =&\frac{p(2-p)}{3}\tr(D)
       \bigl((2-3p)T+3p\tr(\Lambda T)I_r\bigr)
       \Lambda\bigl((2-3p)T+3p\tr(\Lambda T)I_r\bigr)\\
 &-4p^3\tr(D)\bigl(\tr(\Lambda T)\bigr)^2\Lambda.
\end{aligned}
\]
Substituting this identity and collecting the remaining diagonal terms gives
\begin{equation}\label{eq:lu-matrix-certificate}
\begin{aligned}
 &2(2-p)^2\bigl[Q_{\Lambda,p}(\widetilde{T})I_r-(2-3p)\widetilde{T}D_p\widetilde{T}\bigr]\\
 =&(2-p)^2(2-3p)\bigl[Q_\Lambda(T)I_r-2TDT\bigr]+p(2-p)(2+3p)\mathcal R_\Lambda(T)I_r\\
 &+2p(2-p)(2-3p)\left[\Bigl(\mathcal R_\Lambda(T)
          +\tr(D)\bigl(\tr(\Lambda T)\bigr)^2\Bigr)I_r+\frac{2\tr(D)}{3}T\Lambda T-\tr(\Lambda T)(TD+DT)\right]\\
 &+\frac{p(2-p)}{3}\tr(D)
       \bigl((2-3p)T+3p\tr(\Lambda T)I_r\bigr)
       \Lambda\bigl((2-3p)T+3p\tr(\Lambda T)I_r\bigr)\\
 &+p^2\bigl(\tr(\Lambda T)\bigr)^2
       \bigl[3(2-p)\tr(D)I_r-2(2-3p)D-4p\tr(D)\Lambda\bigr].
\end{aligned}
\end{equation}
For $0<p\le\frac{2}{3}$, all five scalar coefficients are nonnegative. Lemmas~\ref{lem:matrix}, \ref{lem:lu-auxiliary}, and~\ref{lem:lu-mixed} show that the first three terms are positive semidefinite, respectively. The fourth is positive semidefinite since $T$ is symmetric and $\Lambda$ is positive definite. For the last term, \eqref{eq:diagonal-weight-bound} and $\Lambda\preceq I_r$ give
\[
 3(2-p)\tr(D)I_r-2(2-3p)D-4p\tr(D)\Lambda
 \succeq4(1-p)\tr(D)I_r\succeq0.
\]
Thus the right-hand side of \eqref{eq:lu-matrix-certificate} is positive semidefinite. Dividing by $2(2-p)^2>0$ proves \eqref{eq:lu-matrix}.
\end{proof}

For the remainder of this section, fix $p=\frac{2}{3n}$.

\begin{lemma}\label{lem:lu-critical-derivative}
Let $x:M^n\to\Sph^{n+q}$ be a minimal immersion with flat normal bundle, where $n\ge3$. At a critical point of $E/S^p$ where $E>0$, we have
\begin{equation}\label{eq:lu-critical-derivative}
\begin{aligned}
\sum_\alpha\left(S-|A^\alpha|^2-\frac{pE}{2S}\right)
                   |\nabla A^\alpha|^2
 +\left(\frac{1}{2}+\frac{p(1-p)E}{4S^2}\right)|\nabla S|^2-2\sum_{\alpha,\beta,k}
       \left(\sum_{i,j}A^\alpha_{ij}A^\beta_{ij;k}\right)^2
 \ge0.
\end{aligned}
\end{equation}
\end{lemma}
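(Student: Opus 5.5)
Mirror the proof of Lemma~\ref{lem:coercive}, now with Lemma~\ref{lem:matrix-parameter} at $p=\frac{2}{3n}$ in place of Lemma~\ref{lem:matrix}. Work at a critical point $x_0$ of $E/S^p$ with $E>0$. Take the frames of Section~\ref{sec:notation}, so that \eqref{eq:normal-orthogonality} holds and the shape operators are diagonal. Let $\Lambda,D$ be as in \eqref{eq:geometric-matrices}. The critical point condition reads $\nabla_kE=\frac{pE}{S}\nabla_kS$. For each tangent index $k$ define $F$ as in the proof of Lemma~\ref{lem:coercive}. By \eqref{eq:geometric-traces}, $\tr(DF)=\frac{\nabla_kE}{4S^2}=\frac{pE}{4S^3}\nabla_kS=\frac{p\tr(D)}{2}\tr(\Lambda F)$. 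This says exactly $\tr(D_pF)=0$, so the constraint of Lemma~\ref{lem:matrix-parameter} holds for $\widetilde T=F$ itself, with no shift needed. The zero shape operators contribute $S|\nabla A^\alpha|^2$ and no contraction terms, as before, and they can be added at the end.

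Next, rewrite the diagonal-index part of the left side of \eqref{eq:lu-critical-derivative} for fixed $k$ in terms of $Q_{\Lambda,p}(F)$. For the nonzero operators, $\frac12(\nabla_kS)^2=2S^2(\tr(\Lambda F))^2$ and $2\sum_{\alpha,\beta}(\sum_iA^\alpha_{ii}A^\beta_{ii;k})^2=2S^2\tr(\Lambda F\Lambda F)$. The extra term $\frac{p(1-p)E}{4S^2}(\nabla_kS)^2=p(1-p)\tr(D)S^2(\tr(\Lambda F))^2$ is precisely the additional coefficient in $Q_{\Lambda,p}$. The weight $S-|A^\alpha|^2-\frac{pE}{2S}$ equals $S(D_p)_{\alpha\alpha}/a_\alpha$, which is $S(1-a_\alpha)-\frac{pS\tr D}{2}$; this matches the replacement $D\to D_p$. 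So the $k$-th slice becomes
\[
S^2\bigl[Q_{\Lambda,p}(F)-3\tr(F^2D_p)\bigr]+3\sum_{\alpha,i}w_\alpha(A^\alpha_{ii;k})^2-2\sum_\alpha w_\alpha(A^\alpha_{kk;k})^2,
\]
where $w_\alpha=S-|A^\alpha|^2-\frac{pE}{2S}$. The off-diagonal Codazzi terms of \eqref{eq:codazzi-square-decomposition} carry the weight $w_\alpha>0$ and can simply be dropped. Positivity of $w_\alpha$ follows from $(D_p)_{\alpha\alpha}>0$, shown in the proof of Lemma~\ref{lem:matrix-parameter}.

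Then redo Lemma~\ref{lem:directional-estimate} with the weights $w_\alpha$ and $D_p$. The orthogonal projection step gives $\sum_{\alpha,i}w_\alpha(A^\alpha_{ii;k})^2\ge S^2\tr(F^2D_p)$. The row bound uses Cauchy--Schwarz with $\sum_i(\delta_{ik}-\frac1n)$ together with the operator bound $|F\sqrt{D_p}v|^2\le\frac{Q_{\Lambda,p}(F)}{2(1-3p/2)}$ from \eqref{eq:lu-matrix}. Together these give
\[
\sum_\alpha w_\alpha(A^\alpha_{kk;k})^2\le\frac{n-1}{n}\left\{\frac{S^2Q_{\Lambda,p}(F)}{2(1-\frac{3p}{2})}+\sum_{\alpha,i}w_\alpha(A^\alpha_{ii;k})^2-S^2\tr(F^2D_p)\right\}.
\]
Substituting, the slice is bounded below by
\[
S^2Q_{\Lambda,p}(F)\Bigl[1-\tfrac{n-1}{n(1-3p/2)}\Bigr]+\bigl(3-\tfrac{2(n-1)}{n}\bigr)\Bigl[\sum w_\alpha(A^\alpha_{ii;k})^2-S^2\tr(F^2D_p)\Bigr].
\]
With $p=\frac{2}{3n}$ we get $1-\frac{3p}{2}=\frac{n-1}{n}$, so the first bracket vanishes, and the second term is nonnegative by the projection inequality. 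Summing over $k$ then gives \eqref{eq:lu-critical-derivative}.

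The main obstacle is the bookkeeping in the first identification step. One must check that every coefficient matches $Q_{\Lambda,p}$ exactly, in particular that the $-\frac{pE}{2S}$ weight shift produces $3\tr(F^2D_p)$ and the correct projection remainder. One must also confirm that this choice of $p$ is the one making the row-bound coefficient exactly critical, rather than leaving a negative remainder.
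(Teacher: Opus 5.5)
Your proposal is correct and follows the paper's proof essentially step for step: the critical-point relation gives $\tr(D_pF)=0$ with no shift, each $k$-slice is rewritten via $Q_{\Lambda,p}(F)$ with the weights $w_\alpha$, the weighted projection and row bound use Lemma~\ref{lem:matrix-parameter}, and $p=\frac{2}{3n}$ cancels the $Q_{\Lambda,p}$ term, leaving $\frac{n+2}{n}$ times the nonnegative projection remainder. One small correction: a zero shape operator now carries the weight $S-\frac{pE}{2S}=S\bigl(1-\frac{p\tr D}{2}\bigr)$, not $S$ as before, and this weight is still positive because $\tr D<1$ and $p<2$.
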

\begin{proof}
For each tangent index $k$, take $\Lambda,D,F$ as in the proof of Lemma~\ref{lem:coercive}. From \eqref{eq:geometric-traces} and the critical point relation, we have
\[
 \tr(D_pF)=\frac{\nabla_kE-\frac{pE}{S}\nabla_kS}{4S^2}=0.
\]
Since $D_p$ is positive definite, for $1\le\alpha\le r$ we have
\[
 S-|A^\alpha|^2-\frac{pE}{2S}>0.
\]

Repeating the orthogonal projection in the proof of Lemma~\ref{lem:directional-estimate} with these positive weights gives
\[
 \sum_{\alpha=1}^r\sum_i
 \left(S-|A^\alpha|^2-\frac{pE}{2S}\right)(A^\alpha_{ii;k})^2
 -S^2\tr(F^2D_p)\ge0.
\]
From the definition of $F$, \eqref{eq:geometric-traces}, and \eqref{eq:lu-q-p}, we obtain
\[
\begin{aligned}
 &\left(\frac{1}{2}+\frac{p(1-p)E}{4S^2}\right)|\nabla_kS|^2
 -2\sum_{\alpha,\beta=1}^r
 \left(\sum_i A^\alpha_{ii}A^\beta_{ii;k}\right)^2\\
 =&S^2\left\{\bigl[2+p(1-p)\tr(D)\bigr]\bigl(\tr(\Lambda F)\bigr)^2
       -2\tr(\Lambda F\Lambda F)\right\}\\
 =&S^2\bigl[Q_{\Lambda,p}(F)-3\tr(F^2D_p)\bigr].
\end{aligned}
\]
Combining this identity with \eqref{eq:codazzi-square-decomposition}, we obtain for each $k$
\[
\begin{aligned}
 &\sum_{\alpha=1}^r\left(S-|A^\alpha|^2-\frac{pE}{2S}\right)
 \left[3\sum_i(A^\alpha_{ii;k})^2-2(A^\alpha_{kk;k})^2\right]\\
 &+\left(\frac{1}{2}+\frac{p(1-p)E}{4S^2}\right)|\nabla_kS|^2
 -2\sum_{\alpha,\beta=1}^r
 \left(\sum_i A^\alpha_{ii}A^\beta_{ii;k}\right)^2\\
 =&S^2Q_{\Lambda,p}(F)
 +3\left[\sum_{\alpha=1}^r\sum_i
 \left(S-|A^\alpha|^2-\frac{pE}{2S}\right)(A^\alpha_{ii;k})^2
 -S^2\tr(F^2D_p)\right]\\
 &-2\sum_{\alpha=1}^r
 \left(S-|A^\alpha|^2-\frac{pE}{2S}\right)(A^\alpha_{kk;k})^2.
\end{aligned}
\]
By the projection argument of Lemma~\ref{lem:directional-estimate}, Lemma~\ref{lem:matrix-parameter}, and $1-\frac{3p}{2}=\frac{n-1}{n}$, we obtain
\begin{equation}\label{eq:lu-weighted-row}
\begin{aligned}
 &\sum_{\alpha=1}^r
 \left(S-|A^\alpha|^2-\frac{pE}{2S}\right)(A^\alpha_{kk;k})^2\\
 \le& \frac{n-1}{n}\left[
 S^2\lambda_{\max}(FD_pF)
 +\sum_{\alpha=1}^r\sum_i
 \left(S-|A^\alpha|^2-\frac{pE}{2S}\right)(A^\alpha_{ii;k})^2
 -S^2\tr(F^2D_p)\right]\\
 \le& \frac{S^2}{2}Q_{\Lambda,p}(F)+\frac{n-1}{n}\left[
 \sum_{\alpha=1}^r\sum_i
 \left(S-|A^\alpha|^2-\frac{pE}{2S}\right)(A^\alpha_{ii;k})^2
 -S^2\tr(F^2D_p)\right].
\end{aligned}
\end{equation}
where $\lambda_{\max}$ denotes the largest eigenvalue.

Combining the preceding identity with \eqref{eq:lu-weighted-row}, we obtain
\[
\begin{aligned}
 &\sum_{\alpha=1}^r
 \left(S-|A^\alpha|^2-\frac{pE}{2S}\right)
 \left[3\sum_i(A^\alpha_{ii;k})^2-2(A^\alpha_{kk;k})^2\right]\\
 &+\left(\frac{1}{2}+\frac{p(1-p)E}{4S^2}\right)|\nabla_kS|^2
 -2\sum_{\alpha,\beta=1}^r
 \left(\sum_i A^\alpha_{ii}A^\beta_{ii;k}\right)^2\\
 \ge&\frac{n+2}{n}\left[
 \sum_{\alpha=1}^r\sum_i
 \left(S-|A^\alpha|^2-\frac{pE}{2S}\right)(A^\alpha_{ii;k})^2
 -S^2\tr(F^2D_p)\right]\ge0.
\end{aligned}
\]

The terms with three distinct tangent indices retain their positive weights. For a zero shape operator, the contraction terms vanish by Lemma~\ref{lem:symmetry}, leaving $S\left(1-\frac{p\tr(D)}{2}\right)|\nabla A^\alpha|^2\ge0$. Summing over $k$ and restoring these terms proves \eqref{eq:lu-critical-derivative}.
\end{proof}

\begin{proof}[Proof of Theorem~\ref{thm:lu-pinching}]
Define
\[
 f(x)=
 \begin{cases}
 \frac{E(x)}{S(x)^p},& S(x)>0,\\
 0,& S(x)=0.
 \end{cases}
\]
The bound $0\le f\le S^{2-p}$, together with the pinching assumption, shows that $f$ is bounded and continuous on $M$, and it is smooth on $\{S>0\}$.

At a critical point where $f>0$, $\nabla f=0$ gives $\nabla E=\frac{pE}{S}\nabla S$, and hence
\[
\begin{aligned}
 \frac{\Delta f}{f}
 &=\frac{\Delta E}{E}-p\frac{\Delta S}{S}
   -\frac{2p\langle\nabla E,\nabla S\rangle}{ES}
   +p(p+1)\frac{|\nabla S|^2}{S^2}\\
 &=\frac{\Delta E}{E}-p\frac{\Delta S}{S}
   +p(1-p)\frac{|\nabla S|^2}{S^2}.
\end{aligned}
\]
Substituting \eqref{eq:simons} and \eqref{eq:E-evolution}, and using \eqref{eq:defect}, we obtain
\[
\begin{aligned}
 \frac{\Delta f}{f}
 =&\frac{4}{E}\Biggl[
 \sum_\alpha\left(S-|A^\alpha|^2-\frac{pE}{2S}\right)|\nabla A^\alpha|^2
 +\left(\frac{1}{2}+\frac{p(1-p)E}{4S^2}\right)|\nabla S|^2-2\sum_{\alpha,\beta,k}
       \left(\sum_{i,j}A^\alpha_{ij}A^\beta_{ij;k}\right)^2\Biggr]\\
 &+2\left[(2-p)n-(1-p)S-\frac{pE}{S}
       +\frac{6}{E}\sum_{\alpha<\beta<\gamma}
          \lambda_\alpha\lambda_\beta\lambda_\gamma\right].
\end{aligned}
\]
The first bracket is nonnegative by Lemma~\ref{lem:lu-critical-derivative}. Therefore
\[
 \frac{\Delta f}{f}
 \ge2\left[(2-p)n-(1-p)S-\frac{pE}{S}
       +\frac{6}{E}\sum_{\alpha<\beta<\gamma}
          \lambda_\alpha\lambda_\beta\lambda_\gamma\right].
\]
For the ordered eigenvalues in \eqref{eq:gram-eigenvalues}, \eqref{eq:defect} gives
\[
\begin{aligned}
 &3\sum_{\alpha<\beta<\gamma}\lambda_\alpha\lambda_\beta\lambda_\gamma
      -\frac{E}{2}\sum_{\alpha\ge3}\lambda_\alpha\\
 =&\sum_{\alpha\ge3}\lambda_\alpha
      \left[2\lambda_1\lambda_2-(\lambda_1+\lambda_2)\lambda_\alpha\right]+\sum_{3\le\alpha<\beta}\lambda_\alpha\lambda_\beta
      \left(\lambda_1+\lambda_2-\lambda_\alpha-\lambda_\beta\right)\ge0.
\end{aligned}
\]
Every bracket is nonnegative by the ordering of the eigenvalues. When $q=2$, the sums involving indices at least three are empty. Moreover, by \eqref{eq:defect},
\[
 E\le S^2-\lambda_1^2
 \le2S(S-\lambda_1).
\]
Since $p\le\frac{2}{9}<\frac{1}{2}$, these two estimates yield
\begin{equation}\label{eq:lu-spectral-estimate}
\begin{aligned}
 \frac{6}{E}\sum_{\alpha<\beta<\gamma}
       \lambda_\alpha\lambda_\beta\lambda_\gamma-\frac{pE}{S}
 &\ge\sum_{\alpha\ge3}\lambda_\alpha-2p(S-\lambda_1)\\
 &=(1-2p)\sum_{\alpha\ge3}\lambda_\alpha-2p\lambda_2
 \ge-2p\lambda_2.
\end{aligned}
\end{equation}
Substituting this into the lower bound for $\frac{\Delta f}{f}$ gives
\begin{equation}\label{eq:lu-positive-critical}
 \frac{\Delta f}{f}
 \ge2\left[(2-p)n-(1-p)(S+\lambda_2)
                         +(1-3p)\lambda_2\right].
\end{equation}
From the pinching assumption, we have
\[
 S+\lambda_2\le2n+\frac{2n}{3n-2}
 =\frac{(2-p)n}{1-p}.
\]
Since $f>0$ implies $E>0$ and hence $\lambda_2>0$, while $1-3p>0$, we conclude that $\frac{\Delta f}{f}\ge2(1-3p)\lambda_2>0$.

Suppose that $\sup_M f>0$ and choose points $p_j$ with $f(p_j)\to\sup_M f$. The bound $f\le S^{2-p}$ implies, for all sufficiently large $j$,
\[
 S(p_j)\ge
 \left(\frac{1}{2}\sup_M f\right)^{\frac{1}{2-p}}>0.
\]
By the uniform bound on $S$ and Lemma~\ref{lem:compactness}, a subsequence of the immersions with base points $p_j$ converges smoothly to a complete pointed minimal immersion with base point $p_\infty$ and flat normal bundle. The ordered eigenvalues of $\G$ depend continuously on its entries, so the pinching inequality passes to the limit.

The lower bound for $S(p_j)$ gives $S_\infty(p_\infty)>0$. Thus $f_\infty=\frac{E_\infty}{S_\infty^p}$ is smooth on a neighborhood of $p_\infty$, where smooth convergence gives
\[
 f_\infty\le\sup_M f,\qquad
 f_\infty(p_\infty)=\sup_M f>0.
\]
Hence $f_\infty$ has a positive local maximum at $p_\infty$, so $\Delta f_\infty(p_\infty)\le0$. This contradicts \eqref{eq:lu-positive-critical}. Therefore $f\equiv0$ and $E\equiv0$, and Lemma~\ref{lem:codimension-reduction} completes the proof.
\end{proof}

\begin{remark}\label{rem:lu-weaker-pinching}
The conclusion of Theorem~\ref{thm:lu-pinching} remains valid if \eqref{eq:lu-pinching} is replaced by
\[
 S+\frac{4}{3n-2}\lambda_2\le2n+\frac{2n}{3n-2}.
\]
Indeed, at a point where $E>0$,
\[
 2S(S-\lambda_1)-E
 =(S-\lambda_1)^2+\sum_{\alpha\ge2}\lambda_\alpha^2>0.
\]
Keeping this remainder in the derivation of \eqref{eq:lu-spectral-estimate}, we obtain at a positive critical point of $f$
\[
\begin{aligned}
 \frac{\Delta f}{f}\ge2\bigl[(2-p)n-(1-p)S-2p\lambda_2\bigr]+\frac{2p}{S}
 \left[(S-\lambda_1)^2+\sum_{\alpha\ge2}\lambda_\alpha^2\right]>0.
\end{aligned}
\]
The first bracket is nonnegative by the assumed bound and $p=\frac{2}{3n}$. This bound also gives a uniform upper bound for $S$ and passes to a smooth pointed limit. The same limit argument gives $E\equiv0$, and Lemma~\ref{lem:codimension-reduction} proves the conclusion.
\end{remark}

\section*{Acknowledgments}
The authors are grateful to Professors Hongwei Xu and Entao Zhao for insightful discussions and valuable suggestions on this work.

\end{document}